\newtheorem{pro}{Proposition}[section]
\newtheorem{teo}{Theorem}[section]
\newtheorem{lem}{Lemma}[section]
\newcommand{\pedro}[1]{\ifthenelse{\boolean{pedro}}{\color{red!80!black}
    \setboolean{pedro}{false}\marginpar{\begin{tiny}#1\end{tiny}}}{\color{black}\setboolean{pedro}{true}}}
\newcommand{\javier}[1]{\ifthenelse{\boolean{pedro}}{\color{red}
    \setboolean{pedro}{false}\marginpar{\begin{tiny}#1\end{tiny}}}{\color{black}\setboolean{pedro}{true}}}
\newcounter{margin}
\theoremstyle{remark}
\newtheorem{rem}{Remark}[section]
\newtheorem{example}[teo]{Example}
\theoremstyle{definition}
\newtheorem{defi}{Definition}[section]
\begin{document}

\title{The Poincar\'e problem for reducible curves}


\author{P. Fortuny Ayuso}
\address{Departamento de Matem\'{a}ticas, Universidad de Oviedo, Spain.}
\email{fortunypedro@uniovi.es}
 
\author{J. Rib\'{o}n}
\address{Instituto de Matemática e Estatística, Universidade Federal Fluminense, Brazil.}
\email{jribon@id.uff.br}


\subjclass[2020]{Primary 32S05; Secondary 32S65, 14H20}



\providecommand{\keywords}[1]{\noindent\textbf{\textit{Keywords:}} #1.}



\begin{abstract}
We provide sharp lower bounds for the multiplicity of a local holomorphic foliation defined in a 
complex surface in terms of data associated to a germ of invariant curve.
Then we apply our methods to invariant curves whose branches are isolated, i.e. 
they are never contained in  non-trivial analytic  families of equisingular invariant curves.
In this case we show that the multiplicity of an invariant curve is at most twice the 
multiplicity of the foliation. Finally, we apply the local methods to foliations in 
the complex projective plane.
\end{abstract}

\maketitle
{\it Keywords}: 
Isolated Invariant Curve, Singularity of Holomorphic Foliation, Multiplicity, 
Poincar\'{e} Problem.

\section{Introduction}
The Poincar\'{e} problem (bounding the degree, or equivalently, the genus of an invariant curve of a foliation in projective space) has been thoroughly studied lately 
\cite{Car:1994, Brunella:1997, LN:2002, Pereira:2002, Cavalier-Lehmann:2006, Galindo-Monserrat:2014,Genzmer-Mol:2018} (to cite several relevant instances). 
  We want to obtain lower bounds for the complexity of a foliation 
in terms of data associated to an invariant curve and as much as possible not on the foliation itself, following an approach that  
is similar, in spirit, to the point of view of Cerveau and Lins-Neto in \cite{CL:1991}.
Indeed, one of the main contributions of the paper is that its methods do not depend on the reduction of singularities of the foliation and moreover, 
some of its results do not depend on the foliation and depend just on the invariant curve.

In a previous work \cite{Cano-Fortuny-Ribon:2020} with J. Cano,
we covered the local case for irreducible \emph{branches} (local analytic curves with a single irreducible component). There we defined the concept of \emph{virtual multiplicity} of an analytic branch $\gamma$: if $n$ is its multiplicity and $p_1/q_1,\ldots, p_g/q_g$ are its characteristic exponents, then $\mu(\gamma)$ is the denominator of the last-but-one characteristic exponent: $\mu(\gamma)=q_{g-1}$. Despite its seemingly artificial nature, it has an intuitive geometric interpretation: let $\tau:(\mathbb{C}^2,0)\rightarrow (\mathbb{C}^{2},0)$ be the ramification map $\tau(u,y)=(u^n,y)$, and let $\pi_{\tau}$ be the minimal resolution of singularities of $\tau^{-1}(\gamma)$ (which is a tree, as $\tau^{-1}(\gamma)$ is a union of $n$ non-singular branches). Then $\mu(\gamma)$ is exactly the number of irreducible components of the exceptional divisor $E_{\tau}=\pi_{\tau}^{-1}(0,0)$ which contain no center of the sequence $\pi_{\tau}$, or equivalently components with $-1$ self-intersection  (so to say, \emph{terminal} components).

In the case of a reducible curve $\Gamma$,  we generalize the virtual multiplicity in two ways. The first one is the obvious one: if $\tau:(\mathbb{C}^2,0)\rightarrow (\mathbb{C}^2,0)$ is a ramification map which turns $\Gamma$ into a union of smooth branches and $\pi_{\tau}$ is the resolution of singularities of $\tau^{-1}(\Gamma)$, then one can define the \emph{terminal virtual multiplicity} $\mu_T(\Gamma)$ as the number of terminal components in $E_{\tau}=\pi_{\tau}^{-1}(0,0)$. However (see Example \ref{ex:terminal-and-divisorial}), this number might be too low, and one can also consider the set of irreducible components of $E_{\tau}$ meeting one of the branches of the strict transform of $\tau^{-1}(\Gamma)$ by $\pi_{\tau}$: this number we call the \emph{divisorial virtual multiplicity} $\mu_D(\Gamma)$, which is at least equal to $\mu_T(\Gamma)$. If $\Gamma$ is irreducible, $\mu_D(\Gamma)=\mu_T(\Gamma)$ (and both equal $\mu(\Gamma)$) but in the general case they may differ. 
  We denote by $\nu_0 ({\mathcal F})$ and $\nu_{0} (\Gamma)$ the multiplicities at the origin of ${\mathcal F}$ and $\Gamma$ respectively.  
Our first result is:

\begin{teo}
\label{teo:genr}
  Let ${\mathcal F}$ be a germ of holomorphic foliation defined in a neighborhood of $0$
  in ${\mathbb C}^{2}$. Let $\Gamma$ be a germ of singular invariant curve. Then we have 
  \[
    \nu_0 ({\mathcal F})  \geq
    {\max \left(\mu_{T} (\Gamma), \frac{\mu_{D}(\Gamma)}{2} \right)} .
  \]
\end{teo}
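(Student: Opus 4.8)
The plan is to run the sequence of point blow-ups resolving the invariant curve and, at every step, to exchange a drop in the algebraic multiplicity of the transform of ${\mathcal F}$ for the exceptional component created there. Two elementary facts are the engine. First, for a single blow-up $\pi$ at a point $p$, writing $\widehat{\mathcal H}$ for the transform of a foliation ${\mathcal H}$ and $D=\pi^{-1}(p)$: one has $\sum_{q\in D}\nu_{q}(\widehat{\mathcal H})=\nu_{p}({\mathcal H})-1$ if $D$ is not ${\mathcal H}$-invariant, and $\sum_{q}\big(\nu_{q}(\widehat{\mathcal H})-1\big)=\nu_{p}({\mathcal H})-1$, the sum over the singular points of $\widehat{\mathcal H}$ lying in $D$, if $D$ is invariant. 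Second, at a point $q$ at which ${\mathcal H}$ is not dicritical, the distinct tangent directions of ${\mathcal H}$-invariant curves through $q$ number at most $\nu_{q}({\mathcal H})+1$, since they are among the roots of the tangent cone, which has degree $\nu_{q}({\mathcal H})+1$. Now fix a ramification $\tau(u,y)=(u^{n},y)$ with $n$ divisible by the Puiseux exponents of all branches of $\Gamma$, so that $\widetilde{\Gamma}:=\tau^{-1}(\Gamma)$ is a union of smooth branches; let $\pi_{\tau}=\sigma_{1}\circ\cdots\circ\sigma_{s}$ be its minimal resolution, with exceptional components $E_{1},\dots,E_{s}$, and recall that $\mu_{T}(\Gamma)$ and $\mu_{D}(\Gamma)$ are read off $\pi_{\tau}$.

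For $\nu_{0}({\mathcal F})\geq\mu_{T}(\Gamma)$ I would descend through $\pi_{\tau}$ and charge each terminal component $E_{i}$ — self-intersection $-1$, with no centre of the sequence lying on it — one unit of multiplicity, produced at the blow-up $\sigma_{i}$ that creates it, say centred at $c_{i}$. By minimality of $\pi_{\tau}$ the point $c_{i}$ carries either at least two branches of the strict transform of $\widetilde{\Gamma}$ or an exceptional component together with a tangent branch, and, since one further blow-up already finishes the resolution there, these curves are pairwise transverse at $c_{i}$. If $E_{i}$ is non-invariant then ${\mathcal F}$ is dicritical at $c_{i}$ and the first fact supplies the drop; if $E_{i}$ is invariant then ${\mathcal F}$ is not dicritical at $c_{i}$, and the branches through $c_{i}$ together with the exceptional component present there (the case $c_{i}=0$ being trivial, as then $\mu_{T}(\Gamma)=1$) provide, in the main case, at least three distinct invariant tangent directions, whence $\nu_{c_{i}}({\mathcal F})\geq 2$ by the second fact and again a drop of at least one — the few boundary configurations being dealt with separately. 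Terminal components occupy pairwise incomparable vertices of the resolution tree, so these drops are independent; telescoping the two identities from the $c_{i}$ down to the origin accumulates at least one unit of multiplicity per terminal component, giving $\nu_{0}({\mathcal F})\geq\mu_{T}(\Gamma)$.

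For $\nu_{0}({\mathcal F})\geq\mu_{D}(\Gamma)/2$ the same accounting must in addition absorb the non-terminal components $E_{i}$ meeting a branch of the strict transform of $\widetilde{\Gamma}$: such an $E_{i}$ carries simultaneously a centre of the sequence and an intersection point $E_{i}\cap\widetilde{\gamma}^{*}$, which is a singularity of the transform of ${\mathcal F}$ when $E_{i}$ is invariant. Organising these contributions along each maximal chain of the resolution tree, the point is that no unit of multiplicity released by the telescoping of the first part is charged by more than two components counted in $\mu_{D}(\Gamma)$; summing then yields $2\,\nu_{0}({\mathcal F})\geq\mu_{D}(\Gamma)$, and the two inequalities together are the statement. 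As Example \ref{ex:terminal-and-divisorial} shows, the factor $2$ and the use of $\mu_{D}$ rather than $\mu_{T}$ are genuinely needed.

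The main obstacle is exactly this $\mu_{D}$-bookkeeping: the single-blow-up identities are exact but purely local, and turning them into the global inequality with the factor $2$ means controlling how the surplus singularities imposed by the separatrices distribute themselves relative to the centres of the resolution — in particular, ruling out that an entire chain of non-terminal divisorial components collapses onto one unit of multiplicity. A secondary but unavoidable technical point is the passage through the ramification: since $\tau$ need not preserve $\nu_{0}$ and $\tau\circ\pi_{\tau}$ is not itself a composition of blow-ups, the accounting has to be set up so that the quantity bounded below is genuinely $\nu_{0}({\mathcal F})$, which forces one either to transfer a sharpened estimate back from $\tau^{*}{\mathcal F}$ or to perform the whole descent over the resolution of $\Gamma$ itself.
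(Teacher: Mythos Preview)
Your proposal has the right architecture --- pass to the ramification $\tau$, run the minimal resolution $\pi_\tau$ of $\tau^{-1}(\Gamma)$, and extract one unit of multiplicity per terminal component --- but several load-bearing pieces are either misstated or missing.

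First, your ``two elementary facts'' are not the correct blow-up identities. The single-blow-up formulas relate $\nu_p({\mathcal H})$ to the indices $Z_q(\widehat{\mathcal H},D)$ (vanishing order of the restriction) in the invariant case and to $\mathrm{tang}_q(\widehat{\mathcal H},D)$ in the dicritical case, \emph{not} to the algebraic multiplicities $\nu_q(\widehat{\mathcal H})$; moreover the dicritical formula is $\nu_p({\mathcal H})=\sum_q\mathrm{tang}_q+1$, not $-1$. Since $Z_q$, $\mathrm{tang}_q$ and $\nu_q$ are genuinely different invariants, the ``telescoping'' you describe does not compose: the output of one step is not the input of the next. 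The correct composed identity is Hertling's formula (Equation~\eqref{equ:Hertling}), which is exactly what the paper uses, and which in this ramified setting simplifies because every weight $w(D)=1$ (no centre of $\pi_\tau$ is a corner).

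Second, you flag the ramification as a technical obstacle (``$\tau$ need not preserve $\nu_0$''), but with the coordinate normalisation in Section~\ref{sec:last} one has $\nu_0(\tau^*{\mathcal F})=\nu_0({\mathcal F})$ on the nose (Remark~\ref{rem:ram1}), so there is nothing to transfer back. This is not a side issue: without it your whole accounting is for the wrong quantity.

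Third, and most seriously, the $\mu_D/2$ argument is essentially absent. The sentence ``no unit of multiplicity released by the telescoping is charged by more than two components counted in $\mu_D$'' is an assertion, not a proof. In the paper this step requires a genuine combinatorial argument: one partitions the invariant part of $E_\tau$ into connected pieces $H\in{\mathcal I}_\tau$, uses that $\sum_{D\in H}\sum_{P\in D}\kappa_P\geq \max(c_H,1)$ (the lower bound $1$ coming from the ``hidden'' contribution of Remark~\ref{rem:hidden}, which you never invoke), separates terminal from non-terminal $H$, and observes that every non-terminal $H$ with $c_H\geq 1$ has a dicritical successor so their number is bounded by the number $N$ of dicritical components. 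None of this structure is visible in your sketch, and the factor $2$ does not fall out without it.

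In short: replace your ad hoc telescoping by Hertling's formula applied to ${\mathcal G}_0=\tau^*{\mathcal F}$ along $\pi_\tau$; the bound $\nu_0({\mathcal F})\geq N+\sum_{H\in{\mathcal I}_\tau}(\max(c_H,1)-1)$ then drops out immediately, and both $\mu_T$ and $\mu_D/2$ follow from it by the combinatorics of the tree.
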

Notice how only the geometric structure of the invariant curve $\Gamma$ is relevant: there is no hypothesis on $\mathcal{F}$. We provide examples showing that the bound in Theorem \ref{teo:genr} is sharp.  Moreover, the values of $\mu_{T} (\Gamma)$ and $\mu_{D}(\Gamma)$ can be calculated directly from the Puiseux expansions of 
the irreducible components of $\Gamma$.  

The irreducible components of $\tau^{-1} (\Gamma)$ can be partitioned in packages, where each package contains those components 
of $\tau^{-1} (\Gamma)$  whose strict transform intersects the same component of $E_{\tau}$.
If we consider a curve $\Gamma' \subset \tau^{-1} (\Gamma)$ that contains exactly one curve in each package, we obtain $\mu_{D}(\Gamma) = \nu_{0} (\Gamma')$.
So we obtain a linear lower bound  $\nu_{0} ({\mathcal F}) \geq \nu_{0} (\Gamma')/ 2$ for the multiplicity of ${\mathcal F}$ in terms of the multiplicity of a subcurve 
$\Gamma'$ of $\tau^{-1} (\Gamma)$. Moreover, since $\nu_{0} (\Gamma) = \nu_{0} (\tau^{-1} (\Gamma))$,
we get $\nu_{0} ({\mathcal F}) \geq \nu_{0} (\Gamma)/ 2$ if there is exactly one curve in each package.

Once the most general case is studied, we turn our sight to curves
$\Gamma$ from whose multiplicity $\nu_{0} (\Gamma)$ one can compute a
non-trivial lower bound of $\nu_{0} ({\mathcal F})$. It is here that
$\Gamma$ needs to be related to $\mathcal{F}$: following the ideas by
Corral and Fern\'{a}ndez-S\'{a}nchez in \cite{Corral-Fernandez:2006},
we study the case where all the branches $\gamma$ of $\Gamma$ are
isolated invariant curves of $\mathcal{F}$
(cf. \cite{Camacho-Lins-Sad:1984}): separatrices $\gamma$ which do not belong to a non-constant one-dimensional analytic family of
equisingular curves invariant for $\mathcal{F}$.
 
To tackle this problem we introduce the less stringent notion of {\it
  weak isolation} for invariant curves (cf. Definition
\ref{def:weak_isolation}), which includes both the isolated case
treated in \cite{Corral-Fernandez:2006}, and the case where $\Gamma$
has nodal singularities, treated in \cite{CL:1991}. The value of this
generalization is that weak isolation is invariant by blow-ups
(Proposition \ref{pro:weak_invariance}) and it only rules out very
specific families of equisingular invariant curves determined by
$\Gamma$. We obtain an analogue of Theorem \ref{teo:genr} for the
weakly isolated case.
 \begin{teo}\label{teo:poincare-reducible-intro}
  Let $\Gamma$ be a singular curve that is invariant by a germ of holomorphic
  foliation $\mathcal{F}$ defined in a neighbourhood of the origin in
  $\mathbb{C}^2$. 
 Assume that   $\Gamma$ is weakly isolated.
 Then $2\nu_0(\mathcal{F})\geq \nu_0(\Gamma)$.
 \end{teo}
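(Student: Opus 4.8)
The plan is to argue by induction on the length of a resolution of the curve $\Gamma$ (the number of point blow-ups that make the total transform a normal crossings divisor), the decisive structural input being that weak isolation is preserved by blow-ups, Proposition~\ref{pro:weak_invariance}. Since $\Gamma$ is singular, any foliation leaving it invariant has $\nu_0(\mathcal F)\ge 1$, so the inequality is trivial when $\nu_0(\Gamma)\le 2$; this is the base case. (Theorem~\ref{teo:genr} already gives the conclusion in a large range of situations, namely whenever $\mu_D(\Gamma)=\nu_0(\Gamma)$, i.e. whenever every package of the ramified curve $\tau^{-1}(\Gamma)$ is a single branch, since then $\nu_0(\mathcal F)\ge\mu_D(\Gamma)/2=\nu_0(\Gamma)/2$; the real difficulty lies with the multi-branch packages.) For the inductive step I would blow up the origin, obtaining $\pi\colon X\to(\mathbb C^2,0)$ with exceptional divisor $E$, transformed foliation $\widetilde{\mathcal F}$ and strict transform $\widetilde\Gamma$, and note that $\sum_{p\in\widetilde\Gamma\cap E}(\widetilde\Gamma\cdot E)_p=\nu_0(\Gamma)$ by the projection formula. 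The argument then splits according to whether $\pi$ is dicritical for $\mathcal F$.

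Suppose first that $\pi$ is non-dicritical, so that $E$ is invariant. Then each $p\in\widetilde\Gamma\cap E$ is a singular point of $\widetilde{\mathcal F}$, and the germ $C_p:=(\widetilde\Gamma\cup E,p)$ is a singular invariant curve, weakly isolated by Proposition~\ref{pro:weak_invariance} and resolved by strictly fewer blow-ups than $\Gamma$; the inductive hypothesis therefore gives $2\nu_p(\widetilde{\mathcal F})\ge\nu_p(C_p)=\nu_p(\widetilde\Gamma)+1$. Summing over such $p$ and using the classical identity $\sum_{p\in E}\nu_p(\widetilde{\mathcal F})=\nu_0(\mathcal F)+1$ valid for a non-dicritical blow-up, one closes the induction at once whenever $\widetilde\Gamma$ is transverse to $E$ and meets it in at least two points, since then $\sum_p\nu_p(\widetilde\Gamma)=\nu_0(\Gamma)$. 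The remaining possibility is that the multiplicity of $\Gamma$ accumulates along a chain of infinitely near points at which $\widetilde\Gamma$ stays tangent to the successive exceptional divisors; this is the heart of the proof. Here weak isolation is exactly what is needed: were one of those divisors dicritical for the transformed foliation, the strict transform of $\Gamma$ would be contained in one of a non-constant family of leaves, which pushed down would exhibit $\Gamma$ inside a non-trivial equisingular family of invariant curves, contradicting Definition~\ref{def:weak_isolation}. Thus every divisor along the chain is invariant, and bounding the multiplicity losses blow-up by blow-up along it yields $2\nu_0(\mathcal F)\ge\nu_0(\Gamma)$.

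If $\pi$ is dicritical, $E$ is not invariant, and at each $q\in\widetilde\Gamma\cap E$ either $\widetilde{\mathcal F}$ is transverse to $E$ — in which case $\widetilde\Gamma$ is, near $q$, a single smooth branch lying inside a leaf — or $\widetilde{\mathcal F}$ is tangent to $E$ at $q$, a tangency which both contributes to $\nu_0(\mathcal F)$ through the dicritical multiplicity formula and leaves a weakly isolated germ of $\widetilde\Gamma$ resolved by fewer blow-ups, to which induction applies. In the fully transverse situation $\Gamma$ is a union of $\nu_0(\Gamma)$ smooth branches with pairwise distinct tangents, and weak isolation forbids there being more than $2\nu_0(\mathcal F)$ of these, because otherwise $\Gamma$ would sit inside the equisingular family produced by sliding the branches along $E$; combining this with the dicritical multiplicity formula in the mixed case gives $2\nu_0(\mathcal F)\ge\nu_0(\Gamma)$ again. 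Assembling the two cases, together with the elementary small-resolution configurations, completes the proof. I expect the main technical obstacle to be the tangent-chain bookkeeping in the non-dicritical case: tracking precisely how the algebraic multiplicities of the iterated transforms of $\mathcal F$ and of $\Gamma$ evolve along a chain of blow-ups on which curve and divisor remain tangent, and checking that the cumulative loss for $\mathcal F$ never exceeds half of $\nu_0(\Gamma)$. A related, smaller difficulty is that the naive inductive inequality $2\nu_p(\widetilde{\mathcal F})\ge\nu_p(C_p)$ must be sharpened in the degenerate case where $C_p$ is an ordinary singular point, using the finer consequence of weak isolation that an ordinary $k$-fold invariant point forces at least $k-1$ invariant directions, which beats the bound $k/2$ and absorbs the parity issues that would otherwise arise.
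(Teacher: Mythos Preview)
Your inductive strategy is different from the paper's, which does not proceed blow-up by blow-up but instead applies Hertling's formula~\eqref{equ:Hertling} to the \emph{full} minimal desingularization of $\hat{\Gamma}$, partitions the exceptional divisor into the connected pieces ${\mathcal D}_l$ of Definition~\ref{def:controllable-components} (each beginning at a ``separating center'', a trace point on a dicritical component), and shows via Lemma~\ref{lem:epsilon-l} that the contribution ${\mathcal E}_l$ of each piece already dominates half the multiplicity of the branches it carries. No induction is used, and dicritical divisors are handled directly through this partition rather than being excluded.

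Your sketch contains two genuine gaps. First, the ``classical identity'' $\sum_{p\in E}\nu_p(\widetilde{\mathcal F})=\nu_0(\mathcal F)+1$ is false: equation~\eqref{equ:non-dic} gives $\nu_0(\mathcal F)+1=\sum_{p\in D_1}Z_p(\mathcal F_1,D_1)$, and in general $Z_p(\mathcal F_1,D_1)\ge\nu_p(\mathcal F_1)$ with strict inequality possible (e.g.\ $X=(y+x^2)\partial_x+y^2\partial_y$ along $y=0$ has $\nu=1$, $Z=2$). Thus the correct formula only yields $\sum_p\nu_p(\mathcal F_1)\le\nu_0(\mathcal F)+1$, which points the wrong way: combined with the inductive bound $2\nu_p(\mathcal F_1)\ge\nu_p(\widetilde\Gamma)+1$ it does not give control of $\nu_0(\mathcal F)$ from below. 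Moreover $\sum_p\nu_p(\widetilde\Gamma)$ is in general strictly smaller than $\nu_0(\Gamma)$ (already for a single cusp $y^2=x^3$), so the ``transverse, at least two points'' case is far from the only delicate one.

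Second, and more seriously, your claim that weak isolation forces every divisor along a tangent chain to be invariant is incorrect: Definition~\ref{def:weak_isolation} only imposes $\kappa_{P_\gamma}\ge1$ at the \emph{final} points of the minimal resolution of $\hat\Gamma$, and says nothing about intermediate divisors. Example~\ref{ex:weakly-isolated} exhibits a weakly isolated curve for which both $D_1$ and $D_2$ are dicritical. The argument you give (``pushing down would exhibit $\Gamma$ inside a non-trivial equisingular family'') is precisely the obstruction ruled out by \emph{isolation} (Definition~\ref{def:isolated-corral}), not by weak isolation; the whole point of the separating-center machinery in the paper is to cope with dicritical components appearing anywhere in the resolution. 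The same confusion recurs in your dicritical case (``sliding the branches along $E$''): what actually makes that case work is that weak isolation forces $\mathrm{tang}_{P_\gamma}(\mathcal F_1,D_1)\ge1$ at each non-null branch, feeding directly into~\eqref{equ:dic}.
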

 In this case, the bound in Theorem \ref{teo:genr} is improved dramatically 
 since we do not need to remove any irreducible component of $\Gamma$.

Then we move on to addressing the global Poincar\'{e} problem. In order to do this, lower bounds for the vanishing number 
$Z_{P} ({\mathcal F}, \gamma)$ along a branch $\gamma$ (equal to the GSV-index \cite{zbMATH04196386:1992} except in the singular case) are required. 
We prove the following result relating the vanishing number along $\gamma$ of $\mathcal{F}$ to that of $df$ where $f=0$ is a reduced equation of $\Gamma$:
\begin{teo}
\label{teo:half}
Let ${\mathcal F}$ be a germ of foliation 
defined in a neighborhood of  a point $P$ in a complex surface.
Let $\Gamma$ be a germ of weakly isolated ${\mathcal F}$-invariant singular curve  
in a neighborhood of $P$
of reduced equation $f=0$. 
Denote by ${\mathcal H}$ the foliation $df=0$.
Consider a branch $\gamma$ of $\Gamma$ at $P$. 
Then we have $Z_{P} ({\mathcal F}, \gamma) \geq  Z_{P} ({\mathcal H}, \gamma)/2$.
\end{teo}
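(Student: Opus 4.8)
The plan is to reduce the statement to the behaviour of the vanishing number under blow-ups and then argue by induction along the resolution of $\gamma$, using that weak isolation is preserved by blow-ups (Proposition~\ref{pro:weak_invariance}) and applying Theorem~\ref{teo:poincare-reducible-intro} at every infinitely near point. Recall that $Z_P(\mathcal F,\gamma)$ is the order of vanishing at $P$ of the vector field induced by $\mathcal F$ on the normalization of $\gamma$: if $\mathcal F$ is defined by a reduced $1$-form $\omega=A\,dx+B\,dy$, $\gamma$ is tangent to $\{y=0\}$, and $\phi(t)=(\phi_1(t),\phi_2(t))$ is a primitive parametrization of $\gamma$, then $Z_P(\mathcal F,\gamma)=\operatorname{ord}_t B(\phi(t))-\operatorname{ord}_t\phi_1'(t)$, and similarly $Z_P(\mathcal H,\gamma)$ with $\omega$ replaced by $df$. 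I would track the \emph{defect} $D_P:=2\,Z_P(\mathcal F,\gamma)-Z_P(\mathcal H,\gamma)$ and aim to prove $D_P\ge 0$.

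First I would establish the blow-up formula for the vanishing number. Let $\pi$ be the blow-up of $P$, put $m=\nu_P(\gamma)$ and $\nu=\nu_P(\mathcal F)$, and let $\tilde\gamma$ be the strict transform of $\gamma$; it meets $E:=\pi^{-1}(P)$ at a single point $P'$, with $(\tilde\gamma\cdot E)_{P'}=m$. A direct computation in the chart $(x,v)\mapsto(x,xv)$, after reducing $\pi^{*}\omega$, gives
\[
  Z_P(\mathcal F,\gamma)=Z_{P'}(\pi^{*}\mathcal F,\tilde\gamma)+m\,(\nu-1+\varepsilon_{\mathcal F}),
\]
where $\varepsilon_{\mathcal F}=0$ if $E$ is invariant for $\pi^{*}\mathcal F$ and $\varepsilon_{\mathcal F}=1$ if $E$ is dicritical. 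The same formula applies to $\mathcal H=df$, which is never dicritical and satisfies $\nu_P(\mathcal H)=\nu_P(\Gamma)-1=:N-1$. The delicate point is that $\pi^{*}\mathcal H$ is no longer of the form $dg$: writing $f\circ\pi=u^{N}\tilde f$ (with $u$ a local equation of $E$ and $\tilde f$ of $\tilde\Gamma$), it is the logarithmic foliation $\mathcal L_{\tilde\Gamma+NE}$ given by $N\,\tfrac{du}{u}+\tfrac{d\tilde f}{\tilde f}$. I would deal with this by the elementary observation that the vanishing number along $\tilde\gamma$ of $\mathcal L_{\Delta+rE}$ (for $\Delta$ an effective divisor having $\tilde\gamma$ among its components) does not depend on the integer $r\ge 1$ — the vector field induced on $\tilde\gamma$ equals $u$ times the one induced by $d\tilde f$, so the residue drops out because $\tilde f$ vanishes identically on $\tilde\gamma$. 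Hence $Z_{P'}(\pi^{*}\mathcal H,\tilde\gamma)=Z_{P'}(\mathcal H',\tilde\gamma)$, where $\mathcal H'=dg'$ and $g'=0$ is a reduced equation of $\tilde\Gamma\cup E$ when $\varepsilon_{\mathcal F}=0$ ($E$ being then $\pi^{*}\mathcal F$-invariant and added to the transformed invariant curve), and a reduced equation of $\tilde\Gamma$ alone when $\varepsilon_{\mathcal F}=1$ — using also $Z_{P'}(\mathcal L_{\tilde\Gamma+NE},\tilde\gamma)=Z_{P'}(d\tilde f,\tilde\gamma)+(\tilde\gamma\cdot E)_{P'}$ in the latter case. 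Substituting into the two instances of the blow-up formula and using $(\tilde\gamma\cdot E)_{P'}=m$, both cases collapse to
\[
  D_P=D_{P'}+m\bigl(2\,\nu_P(\mathcal F)-\nu_P(\Gamma)+\varepsilon_{\mathcal F}\bigr),
\]
with $D_{P'}$ the defect at $P'$ relative to $\pi^{*}\mathcal F$, $\tilde\gamma$ and $\mathcal H'$.

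Then I would close the induction. By Proposition~\ref{pro:weak_invariance} the transformed invariant curve is again weakly isolated, and it is singular at $P$ (it contains $\gamma$ together with an exceptional component or a further branch), so Theorem~\ref{teo:poincare-reducible-intro} gives $2\nu_P(\mathcal F)\ge\nu_P(\Gamma)$; with $\varepsilon_{\mathcal F}\ge 0$ this yields $D_P\ge D_{P'}$. Running over the finitely many blow-ups of the resolution of $\gamma$, $D_P=D_{P_0}\ge D_{P_1}\ge\cdots\ge D_{P_k}$, where at $P_k$ the strict transform $\gamma_k$ of $\gamma$ is smooth and transverse to the exceptional divisor. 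There $D_{P_k}\ge 0$ follows directly: if $\gamma_k$ is the only invariant component through $P_k$, then $Z_{P_k}(\mathcal H_k,\gamma_k)=0$ and $D_{P_k}=2\,Z_{P_k}(\mathcal F_k,\gamma_k)\ge 0$; otherwise $\gamma_k$ meets exactly one further invariant component transversally, so $\mathcal F_k$ is singular at $P_k$ (two transverse invariant curves pass through it), the induced vector field on $\gamma_k$ vanishes at $P_k$, and $Z_{P_k}(\mathcal F_k,\gamma_k)\ge 1=Z_{P_k}(\mathcal H_k,\gamma_k)$, whence $D_{P_k}\ge 1$. In all cases $D_P\ge 0$, i.e.\ $2\,Z_P(\mathcal F,\gamma)\ge Z_P(\mathcal H,\gamma)$.

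The heart of the matter — and the step I expect to be the main obstacle — is the first one: proving the blow-up formula for $Z_P$ with the correct dicritical correction $\varepsilon_{\mathcal F}$, and pinning down the right statement about the logarithmic transforms of $\mathcal H$ (the residue-independence observation), so that the non-reduced exceptional multiplicities produced by $\pi^{*}\mathcal H$ do not affect the vanishing number along $\tilde\gamma$; this is precisely what allows the factor $2$ coming from Theorem~\ref{teo:poincare-reducible-intro} to propagate all the way down the resolution. Secondary care is needed with dicritical exceptional components and with checking that the hypotheses of Theorem~\ref{teo:poincare-reducible-intro} (notably the singularity of the transformed invariant curve) hold at each infinitely near point.
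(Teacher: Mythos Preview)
Your inductive scheme and the ``residue independence'' observation are correct and give a genuinely different, more streamlined route than the paper's. The paper keeps the original Hamiltonian $\mathcal{H}$ throughout and tracks its strict transforms $\mathcal{H}_j$, which forces it to compare $\tau_j$ with $\tfrac{1}{2}(\nu_{P_j}(\Gamma_j)+m_j-2)$ where $m_j$ counts \emph{all} exceptional components through $P_j$, invariant or not; since Theorem~\ref{teo:poincare-reducible-intro} only bounds $2\nu_{P_j}(\mathcal{F}_j)$ by the multiplicity of the \emph{invariant} part, the paper cannot get step-by-step monotonicity and must build the partition of Definitions~\ref{def:leader}--\ref{def:partition} and Lemmas~\ref{lem:half_c}--\ref{lem:theta} to absorb negative contributions. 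Your device of replacing $\mathcal{H}$ at each step by the Hamiltonian of the \emph{current} invariant curve $\hat\Gamma_{P_j}$ precisely cancels this discrepancy and makes the pointwise inequality $D_{P_j}\ge D_{P_{j+1}}$ work whenever $\hat\Gamma_{P_j}$ is singular.

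However, your parenthetical assertion that the transformed curve is always singular is false, and this is not ``secondary'': it is exactly where your chain breaks. If $P_j$ lies only on dicritical components of $E_j$, $\gamma_j$ is already smooth, and no other branch of $\Gamma$ passes through $P_j$, then $\hat\Gamma_{P_j}=\gamma_j$ is smooth and Theorem~\ref{teo:poincare-reducible-intro} does not apply; one may then have $\nu_{P_j}(\mathcal{F}_j)=0$ (whence $D_{j+1}$ is invariant, $\varepsilon_{\mathcal{F}_j}=0$, and $D_{P_j}-D_{P_{j+1}}=-1$). Concretely, take $\Gamma=\gamma=\{y^2=x^3\}$ with an $\mathcal{F}$ that is $1$-dicritical and whose strict transform is regular at $P_1$ with leaf $\gamma_1$; such $\mathcal{F}$ exist and $\gamma$ is weakly isolated for them, yet $D_{P_1}-D_{P_2}=-1$. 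The paper's set $\Omega_1$ and its Case~2 are precisely its way around this obstacle. Your argument is easily repaired: stop the induction at the first index $j^{*}$ where $\hat\Gamma_{P_{j^{*}}}$ fails to be singular. At that point $\mathcal{H}^{(j^{*})}$ is regular with leaf $\gamma_{j^{*}}$, so $Z_{P_{j^{*}}}(\mathcal{H}^{(j^{*})},\gamma_{j^{*}})=0$ and $D_{P_{j^{*}}}=2\,Z_{P_{j^{*}}}(\mathcal{F}_{j^{*}},\gamma_{j^{*}})\ge 0$; for all $l<j^{*}$ the curve $\hat\Gamma_{P_l}$ is singular and weakly isolated by iterated Proposition~\ref{pro:weak_invariance}, so your monotonicity holds on $\{0,\dots,j^{*}\}$ and the proof goes through. (If no such $j^{*}$ occurs, your endpoint analysis at $P_k$ is correct provided you use the desingularization of $\Gamma$ \emph{along} $\gamma$, so that $P_k$ is a non-corner point and $\hat\Gamma_{P_k}$ consists of at most two transverse smooth branches.)
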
 
Theorem \ref{teo:half} is the analogue of Theorem \ref{teo:poincare-reducible-intro} for
the vanishing number along a germ of invariant curve. 

Our results conclude with the following application of the previous ideas to the global Poincar\'{e} problem for 
holomorphic foliations in the complex projective plane ${\bf C}{\bf P}(2)$. 
\begin{teo}
\label{teo:main}
Let $\Gamma$ be an algebraic curve that is invariant by a foliation ${\mathcal F}$ of 
${\bf C}{\bf P}(2)$.
Suppose that all singular points $P$ of $\Gamma$ satisfy that 
 the germ of $\Gamma$ at $P$ is weakly isolated.
Then $\deg (\Gamma) \leq 2 \deg ({\mathcal F}) +2$. Moreover, 
$\deg (\Gamma) \leq 2 \deg ({\mathcal F}) +1$ holds if $\Gamma$ is irreducible.
\end{teo}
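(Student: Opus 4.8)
The plan is to reduce the global statement to the local bounds proven earlier (Theorem \ref{teo:poincare-reducible-intro} and, more precisely, Theorem \ref{teo:half}) combined with a global index formula. Recall that for a foliation $\mathcal{F}$ of $\mathbf{C}\mathbf{P}(2)$ of degree $d = \deg(\mathcal{F})$ and an invariant algebraic curve $\Gamma$ of degree $m = \deg(\Gamma)$ with reduced equation, the sum of the local vanishing numbers (GSV-type indices) along the branches of $\Gamma$ over all singular points of $\Gamma$ equals a fixed expression in $d$ and $m$; concretely $\sum_{P}\sum_{\gamma\subset(\Gamma,P)} Z_P(\mathcal{F},\gamma) = m(d-1) + 2 - \chi$-type corrections, but the cleanest route is to use the same formula applied to the foliation $\mathcal{H}_{\mathrm{loc}}$ locally and patch. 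First I would set up the global formula for $\mathcal{F}$: by the Camacho--Lins Sad / GSV machinery, $\sum_{P\in\mathrm{Sing}(\Gamma)} Z_P(\mathcal{F},\Gamma) = m^2 - m(d+2) + \deg(\mathcal{F}\text{-relevant terms})$, so that a \emph{lower} bound on the left side gives an \emph{upper} bound on $m$ once $d$ is fixed. The point is that the total vanishing number of $\mathcal{F}$ along $\Gamma$ is bounded below by a quantity that forces $m \le 2d+2$.

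The key steps, in order: (1) State the global index identity expressing $\sum_P Z_P(\mathcal{F},\Gamma)$ in terms of $\deg(\mathcal{F})$ and $\deg(\Gamma)$ — this is the Poincaré--Hopf / GSV balance on $\mathbf{C}\mathbf{P}(2)$, and the analogous identity for the (singular, non-global) case handled via the logarithmic foliation $\mathcal{H}$ defined by $df=0$ where $f$ is a (local) reduced equation. (2) Apply Theorem \ref{teo:half} pointwise: at each singular point $P$ of $\Gamma$ and each branch $\gamma$ of $\Gamma$ at $P$, weak isolation gives $Z_P(\mathcal{F},\gamma) \ge Z_P(\mathcal{H},\gamma)/2$; summing over branches and over singular points yields $\sum_P Z_P(\mathcal{F},\Gamma) \ge \tfrac12 \sum_P Z_P(\mathcal{H},\Gamma)$. (3) Evaluate $\sum_P Z_P(\mathcal{H},\Gamma)$: since $\mathcal{H}$ is locally the foliation $df=0$ and $Z_P$ of a Hamiltonian-type foliation along a branch of its own level curve is the Milnor-number contribution, the sum over all singular points of $\Gamma$ is the total Milnor number plus a correction — globally this should be identifiable with $(m-1)(m-2) - 2\,\delta$-corrections, but the relevant combination is exactly what the degree–genus formula controls, giving $\sum_P Z_P(\mathcal{H},\Gamma) \ge m(m-3) + \varepsilon$ with $\varepsilon$ accounting for reducibility ($\varepsilon$ larger when $\Gamma$ is irreducible). (4) Combine: $m^2 - m(d+2) + (\text{stuff}) = \sum_P Z_P(\mathcal{F},\Gamma) \ge \tfrac12 \sum_P Z_P(\mathcal{H},\Gamma) \ge \tfrac12 m(m-3) + \tfrac{\varepsilon}{2}$; rearranging the inequality in $m$ makes the quadratic terms cancel and leaves a linear inequality $m \le 2d + 2$ (resp. $m \le 2d+1$ when $\Gamma$ is irreducible, using the improved $\varepsilon$).

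The main obstacle I anticipate is step (3) together with the bookkeeping in step (4): one must make precise which global index identity is being used for $\mathcal{F}$ — there are several normalizations of the GSV/vanishing number in the literature, and the definition here ($Z_P$ equals the GSV index except in the singular case) differs from the naive one exactly on the singular branches, which is where all the action is. Getting the constants $+2$ versus $+1$ right requires carefully tracking the defect between $Z_P(\mathcal{H},\Gamma)$ summed globally and the topological invariant $(m-1)(m-2)$ (the arithmetic genus), and in particular accounting for the difference between the geometric and arithmetic genus when $\Gamma$ is reducible — this is precisely why the irreducible case gains one unit. A secondary subtlety is ensuring Theorem \ref{teo:half} applies at \emph{every} singular point: the hypothesis guarantees weak isolation of the germ of $\Gamma$ at each such $P$, so this is immediate, but one should note that at smooth points of $\Gamma$ the local vanishing numbers vanish and contribute nothing, so the sums genuinely range over $\mathrm{Sing}(\Gamma)$ only. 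Once these normalizations are pinned down, the final inequality is a one-line algebraic manipulation.
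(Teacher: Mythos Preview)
Your overall strategy is the paper's: use Poincar\'e--Hopf on the normalization $\hat{\Gamma}$ (equivalently, a global GSV-type identity), apply Theorem \ref{teo:half} at each singular point of $\Gamma$, and compare with the explicit pole counts $P_F = m(d-1)$ and $P_H = m(m-3)$ at a generic line at infinity. The precise chain the paper uses is $Z_F - P_F = Z_H - P_H = \chi(\hat{\Gamma})$ together with $Z_F \geq Z_H/2$, giving
\[
m(d-1) \;\geq\; \frac{m(m-3)}{2} - \frac{\chi(\hat{\Gamma})}{2} \;\geq\; \frac{m(m-3)}{2} - c,
\]
where $c$ is the number of irreducible components of $\Gamma$, and hence $m \leq 2d + 1 + 2c/m$. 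Your identification of ``the main obstacle'' (pinning down the normalizations in steps (1) and (3)) is accurate: once these are fixed the rest is algebra.

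There is, however, one case your outline genuinely misses. When $c < m$ one has $2c/m < 2$, so $m < 2d+3$ and thus $m \leq 2d+2$; but when $c = m$, i.e.\ when $\Gamma$ is a union of $m$ distinct lines, the bound degrades to $m \leq 2d+3$, which is not the claim. No adjustment of the $\varepsilon$ in your step (3) can repair this, because $\chi(\hat{\Gamma}) = 2m$ exactly in this situation and the factor $1/2$ from Theorem \ref{teo:half} is then too lossy. The paper handles this case by a separate, stronger local estimate (Lemma \ref{lem:lines}): for a weakly isolated curve consisting of pairwise transverse smooth branches one actually has $Z_P(\mathcal{F},\gamma) \geq Z_P(\mathcal{H},\gamma)$ with no factor $1/2$, and this yields $m \leq d+2 \leq 2d+2$ directly. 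This sharper input for the union-of-lines case is a missing ingredient in your proposal. By contrast, the irreducible refinement $m \leq 2d+1$ does follow exactly as you suggest: $c = 1$ and $m \geq 4$ give $2c/m < 1$, while small $m$ is trivial.
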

Thus, improving the results of \cite{Corral-Fernandez:2006},
we prove that the degree of $\Gamma$ admits a linear bound of slope $2$ in terms of the degree of $\mathcal{F}$.

We are convinced that the bound in Theorem \ref{teo:main} is optimal in the slope. It may be possible to decrease the intercept $2$ to a lower value but we have not found any examples.

Finally, one of the most relevant properties of Theorems \ref{teo:genr} , \ref{teo:poincare-reducible-intro},  \ref{teo:half} and  \ref{teo:main}
is that they do not depend at all on the reduction of singularities of $\mathcal{F}$ or on the relation between the desingularization of $\Gamma$ 
and the pull-back of $\mathcal{F}$ to it. The only hypothesis relating $\Gamma$ to $\mathcal{F}$ is that the former is invariant by the latter (and the 
weak isolation properties in Theorems \ref{teo:poincare-reducible-intro},
 \ref{teo:half} and  \ref{teo:main}). 

\section{Setting}\label{sec:setting}
In this section we introduce the invariants and main formulas that we are going to use in
order to obtain lower bounds for the multiplicity of a foliation in
terms of an invariant curve.
\begin{defi}
  Let $f \in {\mathbb C} \{x,y \} \setminus \{0\}$. We define the multiplicity (or
  vanishing order) $\nu_0 (f)$ of $f$ at $0 \in {\mathbb C}^{2}$ as the unique
  $k \in {\mathbb N}$ such that $f \in {\mathfrak m}^{k} \setminus {\mathfrak m}^{k+1}$
  where ${\mathfrak m}$ is the maximal ideal of the local ring ${\mathbb C}\{x,y\}$.  We
  define $\nu_0 (0)= \infty$.

  Let $\Gamma$ be a germ of reduced complex analytic curve defined in a neighborhood of
  $0$ in ${\mathbb C}^{2}$.  It is given by a reduced equation $f=0$ where
  $f \in {\mathbb C} \{x,y\}$. We define the multiplicity $\nu_0 (\Gamma)$ of $\Gamma$ at
  $0 \in {\mathbb C}^{2}$ as $\nu_0 (\Gamma) = \nu_0 (f)$.
\end{defi}
\begin{rem}
  Let $\Gamma = \gamma^1 \cup \hdots \cup \gamma^n$ be the decomposition of $\Gamma$ in
  irreducible components. We have
  $\nu_0 (\Gamma)= \nu_0 (\gamma^1) + \hdots + \nu_0 (\gamma^n)$.
\end{rem}
Next, we define the multiplicity of a foliation.
\begin{defi}
\label{def:mul}
Let ${\mathcal F}$ be a germ of holomorphic foliation defined in a neighborhood of $0$ in
${\mathbb C}^{2}$.  Let $X = a(x,y) \partial / \partial_x + b(x,y) \partial / \partial_y$
be a holomorphic vector field inducing the foliation ${\mathcal F}$ and such that
$\mathrm{Sing} (X) \subset \{0\}$.  We define
$\nu_0 ({\mathcal F}) = \min (\nu_0 (a), \nu_0 (b))$.
\end{defi}
Hertling's formula \cite{Hertling:2000} (see equation \eqref{equ:Hertling} below) relates
$\nu_0 ({\mathcal F})$ to indices associated to a sequence of blow-ups. The following
definitions cover all the necessary concepts to state it.
 \begin{defi}
\label{def:zero}
Consider the setting in Definition \ref{def:mul}.
Let  $\gamma$ be a germ of irreducible invariant curve for $\mathcal{F}$
and consider 
a Puiseux parametrization $\alpha$
of $\gamma$.  We define $Z_{P} ({\mathcal F}, \gamma)$ as the vanishing order of 
$\alpha^{*} X$ at the origin.
\end{defi}
\begin{defi}
\label{def:tang}
Consider the setting in Definition \ref{def:mul}.
Let $\gamma$ be an irreducible germ of curve defined of irreducible
equation $f=0$ where $f \in {\mathbb C} \{x,y\}$. We define the tangency
order between ${\mathcal F}$ and $\gamma$ at $0$ as
\[ \mathrm{tang}_0 ({\mathcal F}, \gamma) = \dim_{\mathbb C}
\frac{{\mathbb C}\{x,y\}}{ (f, X(f))} . \]
\end{defi}
\begin{rem}
  The indices defined in Definitions \ref{def:zero} and \ref{def:tang} are
  invariant under change of coordinates and hence they can be defined at
  any point of a smooth complex surface.
\end{rem}
\begin{rem}
\label{rem:zero_index}
Notice that $Z_{0} ({\mathcal F}, \gamma) \geq 0$ and 
$\mathrm{tang}_0 ({\mathcal F}, \gamma) \geq 0$.
Moreover $Z_{0} ({\mathcal F}, \gamma) = 0$ is equivalent to 
$0 \not \in \mathrm{Sing}({\mathcal F})$. 
Moreover, $\mathrm{tang}_0 ({\mathcal F}, \gamma) = 0$ holds if and only if 
$0 \not \in \mathrm{Sing}({\mathcal F})$ and $X$ is transverse to $\gamma$ at $0$.
\end{rem}
\begin{rem}
The index $Z_{0} ({\mathcal F}, \gamma)$ coincides with the GSV index 
(G\'{o}mez Mont-Seade-Verjovsky) if $\gamma$ is smooth
  \cite{zbMATH04196386:1992}.  
\end{rem}
  Let $(M,P_0)$ be a germ of complex
analytic surface, and $\pi=\pi_1\circ\cdots\circ \pi_k$ be a sequence of
blow-ups where $\pi_1$ is the blow-up of $P_0$ and, for $1\leq l \leq k$,
$\pi_{l}$ is the blow-up of a point $P_{l-1}$ in
$(\pi_{1}\circ\cdots\circ \pi_{l-1})^{-1}(P_0)$. For
$1\leq l \leq k$, we shall denote $\tilde{\pi}_l=\pi_1\circ\cdots\circ \pi_l$ the
composition, and $E_l=\tilde{\pi}^{-1}(P_0)$, $E=\pi^{-1}(P_0)$ and
$D_l=\pi^{-1}_l(P_{l-1})$. Abusing notation, we shall also call $D_l$    the
strict transform of $D_l$ by $\tilde{\pi}_{l+1}, \hdots, \tilde{\pi}_{k-1}$ and  
the whole blow-up process $\pi$. The
following notion is just a matter of brevity: a \emph{trace point} of
   $D_l \subset E_j$ is a non-singular point of $E_{j}$ belonging to $D_l$. A point in
$E_j$ which is not a trace point will be called a \emph{corner} (of either
$E_j$ or $D_l \subset E_j$).  

Given a germ of complex foliation $\mathcal{F}$ in $(M,P_0)$, and a germ of analytic curve $\Gamma$ at $P_0$, we denote by $\mathcal{F}_l$ and $\Gamma_l$, respectively, their strict transforms by $\tilde{\pi}_l$, setting $\mathcal{F}_0=\mathcal{F}$ and $\Gamma_0=\Gamma$ for completeness.
\begin{defi}
\label{def:not}
We denote by $\mathrm{Inv} (E)$ the union of the irreducible
components of $E$   that are   invariant for the foliation $\mathcal{F}_k$. An
irreducible component of $E$ in $\mathrm{Inv}(E)$ will be called an
\emph{invariant component}, whereas one not in $\mathrm{Inv}(E)$ will
be called a \emph{dicritical} component.
\end{defi}
\begin{defi} 
  The set of connected components of $\mathrm{Inv}(E)$ will be denoted
  ${\mathcal I}$. An element $H\in {\mathcal I}$ will be interpreted
  (without confusion) as such a connected component, or as a set whose
  elements are the irreducible components of $E$ contained in $H$. The set $\mathcal{I}$ is empty if 
    $E$ has no invariant irreducible components.  
\end{defi}
\begin{defi}
  Given an irreducible component $D_j$ of $E$ ($1 \leq j \leq k$), its
  \emph{weight} $w (D_j)$ is the multiplicity of any germ of analytic
  branch $\gamma$ such that its strict transform $\gamma_j$ is smooth and
  intersects transversally $D_j$ at a trace point.
\end{defi}
\begin{rem}
  It is easy to see that if $P_j$ is a trace point of some
  $D_l \subset E_j$, we have $w(D_{j+1})=w(D_l)$. On the other hand if
  $P_j$ is a corner point belonging to    irreducible components $D_l$ and $D_{l'}$
 of    $E_j$, we have $w(D_{j+1})=w(D_l) + w(D_{l'})$.
\end{rem}
\begin{defi}\label{def:non-dicritical-valence}
  Let $D_j$ be a dicritical component of $E$. The \emph{non-dicritical
    valence} $v_{\overline{d}} (D_j)$ is the number of invariant components
  $D_l$   of $E$   such that $D_j \cap D_l \neq \emptyset$.
\end{defi}
\begin{defi}
  Given an irreducible component $D_j\subset E$, and $P\in D_{j}$, we
  define
\begin{itemize}
\item
  $\kappa_{P} ({\mathcal F}_k, D_j) = \mathrm{tang}_{P} ({\mathcal F}_k,
  D_j)$ if $D_j\not\in\mathrm{Inv}(E)$. Otherwise:
\item $\kappa_{P} ({\mathcal F}_k, D_j) = Z_{P} ({\mathcal F}_k, D_j) -1$
  if $P$ is a corner point of $E$ and both   irreducible components 
  $D_j$ and $D_l$ of $E$  
  containing $P$ are invariant, or finally
\item $\kappa_{P} ({\mathcal F}_k, D_j) = Z_{P} ({\mathcal F}_k, D_j) $ if
  $D_j$ is invariant but we are not in the preceeding case.
\end{itemize}
\end{defi}
\begin{rem}
  The index $\kappa_{P} ({\mathcal F}_k, D_j)$ is non-negative,  and it is zero only when 
    \begin{itemize} 
  \item $P$ is a regular point
  of $\mathcal{F}_k$ and the separatrix of $\mathcal{F}_k$ through $P$ is either $D_j$ or 
  transverse to $D_j$ or 
  \item $P$ is a corner point, the germ of $E$ at $P$ is invariant and $ Z_{P} ({\mathcal F}_k, D_j) =1$.
  \end{itemize}
\end{rem}
Finally, as we shall use this concept frequently, we say, in general, that
a germ of foliation at $(M,P_0)$ is \emph{$1$-dicritical} if the
exceptional divisor $D_1$ is non-invariant.

The initial formulas relating the multiplicity $\nu_{P_0} ({\mathcal F})$ to vanishing or 
tangency indexes after blow-up are
\begin{equation}
\label{equ:non-dic}
 \nu_{P_0} ({\mathcal F}) = \sum_{P \in   \pi_1^{-1} (P_{0}) } Z_{P} ({\mathcal F}_1, D_1) - 1 
\end{equation}   
if ${\mathcal F}$ is non-$1$-dicritical at $P_0$ and  
\begin{equation}
\label{equ:dic}
\nu_{P_0} ({\mathcal F}) = \sum_{P \in  \pi_1^{-1} (P_{0})} \mathrm{tang}_{P} ({\mathcal F}_1, D_1) + 1 
\end{equation}  
otherwise. Equation (\ref{equ:non-dic}) was generalized by Camacho, Lins Neto and Sad 
for the case where ${\mathcal F}_k$ is non-dicritical, i.e. $D_j$ is invariant for any 
$1 \leq j \leq k$, in \cite{Camacho-Lins-Sad:1984}.
The general formula that holds for every situation was discovered by Hertling
\cite{Hertling:2000}: 
 \begin{equation}
 \label{equ:Hertling}
\nu_{P_0} ({\mathcal F})  +1 = \sum_{D_j} \sum_{P \in D_j} w(D_j) \kappa_P ({\mathcal F}_k, D_j)  
 +  \sum_{D_j  \not \subset \mathrm{Inv}(E)} w(D_j)  (2 - v_{\overline{d}} (D_j)) .
\end{equation}
\begin{rem}\label{rem:hidden} In Hertling's formula, for any $H\in \mathcal{I}$, we have
 \begin{equation}
 \label{equ:rem_contribh}
    \sum_{D_j \in H} \sum_{P \in D_j} w(D_j) \kappa_P ({\mathcal F}_k, D_j) \geq \min_{D_j \in H} w(D_j) 
\end{equation}
by \cite[Proposition 3.7]{Cano-Fortuny-Ribon:2020}.  This will be one
of the main tools in our approach since we can detect ``hidden" index
contributions associated to components $H$ in ${\mathcal I}$ whose
intersection with the strict transform of an invariant curve is
empty. Moreover, the previous inequality is extremely useful, as we do
not need to require that ${\mathcal F}_k$ is a reduction of
singularities of ${\mathcal F}$. It is one of the reasons why we
do not need a desingularization of ${\mathcal F}$ in our arguments.
\end{rem}  
We are also interested in how the vanishing order $Z_{P_0} ({\mathcal F}, \gamma)$ behaves
under blow-up when $\gamma$ is an irreducible germ of invariant curve. Indeed, 
if $\{ P_1 \} = \gamma_1 \cap D_1$, we  have 
\begin{equation}
\label{equ:znon-dic}
 Z_{P_0} ({\mathcal F}, \gamma) = Z_{P_1} ({\mathcal F}_1, \gamma_1) + 
\nu_{P_0} (\gamma) (\nu_{P_0} ({\mathcal F}) -1) 
\end{equation}
if ${\mathcal F}$ is non-$1$-dicritical at $P_0$ and 
\begin{equation}
\label{equ:zdic}
 Z_{P_0} ({\mathcal F}, \gamma) = Z_{P_1} ({\mathcal F}_1, \gamma_1) + 
\nu_{P_0} (\gamma) \nu_{P_0} ({\mathcal F}) 
\end{equation}
otherwise (cf. \cite[Proposition 14.26]{Ilya-Yako:2008}).
\section{Bounds ``up to the last Puiseux exponent''}  
\label{sec:last}
Let $\Gamma$ be a (possibly reduced) germ of irreducible analytic
curve, invariant by a germ of holomorphic foliation ${\mathcal F}$
defined in a neighborhood of the origin of ${\mathbb C}^{2}$.  We want
to obtain a lower bound for the multiplicity of ${\mathcal F}$ in
terms of data associated to $\Gamma$, without imposing any additional
condition on $\mathcal{F}$, generalizing the results proved for the irreducible case in \cite{Cano-Fortuny-Ribon:2020}. In this
section, we find a lower bound obtained by, roughly speaking,
discarding the contribution to the multiplicity of the curve $\Gamma$
provided by the last Puiseux characteristic exponents of its
irreducible branches.

Let us fix the notation for this section. We assume that both
$\mathcal{F}$ and $\Gamma$ are singular at $(0,0)\in \mathbb{C}^2$
(i.e. $\Gamma$ has multiplicity at least $2$ and $\mathcal{F}$ at least
$1$). If $g \geq 1$ is the genus of $\Gamma$ (i.e. the number of Puiseux
characteristics) and $p_1/q_1, \hdots, p_{g}/q_{g}$ are the Puiseux
characteristic exponents, we defined in \cite{Cano-Fortuny-Ribon:2020} the
virtual multiplicity $\mu(\Gamma)$ as $q_{g-1}$. Moreover, we proved that
\begin{equation}
\label{equ:in_virtual}
\nu_{0} ({\mathcal F}) \geq \mu (\Gamma) . 
\end{equation}
and showed that the inequality is sharp. As a consequence, in order to
obtain a sharp lower bound of $\nu_{0} ({\mathcal F})$ in terms of
$\Gamma$ we need to discard the ``contribution of the last Puiseux
exponent".  

In the general case, decompose $\Gamma$ into its irreducible components
$\Gamma=\gamma^1 \cup \cdots \cup \gamma^q$. 
If ${\mathcal F}$ is not $1$-dicritical, then up to a linear change of
coordinates we may assume that $x=0$ is not one of the lines of the
tangent cone of ${\mathcal F}$.  If, on the contrary, ${\mathcal F}$ is
$1$-dicritical, we may assume (after an analytic change of coordinates) that $x=0$ is not the tangent cone of any
$\gamma^j$ for $1 \leq j \leq q$, that $x=0$ is ${\mathcal F}$-invariant
and that the point defined by $x=0$ in $\pi_1^{-1}(0,0)$ is a regular point of $\mathcal{F}_1$.

For each of $\gamma^1,\dots, \gamma^q$,
let $n_1,\dots, n_q$ be its corresponding multiplicity.
We denote $n = \mathrm{lcm} (n_1,\dots, n_q)$ and $\tau (x,y)= (x^{n},y)$.
\begin{rem}
\label{rem:ram1}
From the hypothesis on $\mathcal{F}$ and $\Gamma$ follows that
$\nu_0 (\tau^{*} {\mathcal F})= \nu_0 ({\mathcal F})$
and $\nu_0 (\tau^{-1}(\Gamma)) = \nu_0 (\Gamma)$.
\end{rem}
  Our choice of $\tau$ implies
that all the irreducible components of $\tau^{-1}(\Gamma)$ are smooth
and that there are exactly $\nu_0 (\Gamma)$ of them. 
  We are going to desingularize the curve $\tau^{-1} (\Gamma)$.  
In parallel to the notation of the previous section, 
we let $\pi_{\tau}=\pi^{\tau}_1 \circ \cdots \circ  \pi^{\tau}_r$ be the sequence
of blow-ups in the minimal desingularization of $\tau^{-1}(\Gamma)$,
we denote
$\tilde{\pi}^{\tau}_l=\pi_1^{\tau}\circ\cdots\circ\pi_l^{\tau}$,
$E_{\tau}$ the exceptional divisor of $\pi_{\tau}$, and $D_{\tau,l}$
the irreducible component of $E_{\tau}$ corresponding to
$\tilde{\pi}^{\tau}_l$, that is $D_{\tau,l}=(\pi_l^{\tau})^{-1}(P_{l-1})$.


  
\begin{defi}
  An irreducible component $D\subset E_{\tau}$ is {\it terminal} if $D$ is
  not the father of any other divisor:  assuming $D$ is the exceptional
  divisor of $\pi^{\tau}_l$ then no point of $D$ is a center for the
  blow-ups $\pi_{l+1}^{\tau},\dots, \pi_{k}^{\tau}$. Equivalently, $D$ has $-1$ self-intersection as a submanifold of $\pi_{\tau}^{-1} ({\mathbb C}^{2})$.  
  We denote by $\mathcal{I}_{\tau}$ the set of connected components of $\mathrm{Inv}(E_{\tau})$.
  
  We say that a component
  $H \in \mathcal{I}_{\tau}$ is terminal if it contains a terminal   irreducible component of $E_{\tau}$.  
\end{defi}  
  
\begin{rem}
\label{rem:ram2}
Since $\Gamma$ has multiplicity at least $2$, given any terminal component
$D$ of $E_{\tau}$, at least two irreducible components of
$\tau^{-1}(\Gamma)$ have strict transforms that intersect $D$.
\end{rem}  
 \begin{defi}
  \label{definition:reducible-multiplicity}
  With the notations above, we define the {\it terminal virtual
    multiplicity} $\mu_{T}(\Gamma)$ of $\Gamma$ as the number of
  terminal irreducible components of   $E_{\tau}$.    We define the {\it
    divisorial virtual multiplicity} $\mu_{D} (\Gamma)$ as the number
  of irreducible components of   $E_{\tau}$   meeting the strict transform of
  $\tau^{-1}(\Gamma)$.
\end{defi}
\begin{rem} 
  The numbers $\mu_T (\Gamma)$ and $\mu_D (\Gamma)$ can be computed from
  the Puiseux expansions of $\gamma^1, \hdots, \gamma^q$.  Let the curve
  $\gamma^j$ have a Puiseux expansion $(t^{n_j}, c_j(t))$. Any power
  series of the form
\begin{equation*} 
\left( t, c_{j}(e^{\frac{2 \pi i l}{n_j}} t^{\frac{1}{n_j}}) \right)
\end{equation*} 
is also a parametrization of the same curve for $0 \leq l < n_j$.
The expression  
\begin{equation*} 
\left( t, c_{j}(e^{\frac{2 \pi i l}{n_j}} t^{\frac{n}{n_j}}) \right)
\end{equation*} 
provides all the parametrizations of the $n_j$ curves in  $\tau^{-1} (\gamma^j)$ by 
taking  $0 \leq l < n_j$ (recall that $n=\mathrm{lcm}(n_1,\ldots, n_q)$). Let ${\mathfrak C}_{j}$ be the set 
consisting of the power series expansions
$c_{j}(e^{\frac{2 \pi i l}{n_j}} t^{\frac{n}{n_j}})$ where $0 \leq l < n_j$ 
and ${\mathfrak C} = \cup_{j=1}^{q} {\mathfrak C}_j$.
The cardinal of ${\mathfrak C}$ is equal to $\nu_0 (\Gamma)$.
Consider the natural map $j_l: {\mathfrak C} \to J^{l}$ from ${\mathfrak C}$ to the set 
$J^{l}$ of $l$-jets of 
formal power series for $l \in {\mathbb N}$.
Given an $l$-jet $\sigma \in J^{l}$ we say that $\sigma$ is terminal 
if $j_{l}^{-1}(\sigma)$ contains at least two elements and 
$(j_{l+1})_{|j_{l}^{-1}(\sigma)}$ is injective. 
We say that  $\sigma$ is divisorial if $j_{l}^{-1}(\sigma)$ contains at least two elements and 
there exists $\sigma'$ in $j_{l+1} (j_l^{-1}(\sigma))$ such that 
$\sharp (j_{l+1})_{|j_{l}^{-1}(\sigma)}^{-1} (\sigma') = 1$.
The terminal (resp. divisorial) virtual multiplicity $\mu_{T} (\Gamma)$ 
(resp. $\mu_{D} (\Gamma)$) coincides with the number of terminal (resp. divisorial) jets. 

Given $1 \leq j \leq q$ there exists $l \in {\mathbb N} \cup \{0 \}$ such that 
all the fibers of $j_l :{\mathfrak C}_{j} \to J^{l}$  have $\nu_0 (\gamma^j)/\mu(\gamma^j)$
elements but $j_{l+1}: {\mathfrak C}_{j} \to J^{l+1}$ is injective.
As a consequence  $\mu_{T} (\Gamma)$, $\mu_{D} (\Gamma)$ 
and the definition of the virtual multiplicity $\mu (\Gamma)$  in terms of 
Puiseux characteristic exponents coincide for an irreducible curve $\Gamma$.
Moreover, we obtain
\begin{equation*} 
\mu_{D} (\Gamma) \geq \mu_{T} (\Gamma) \geq \max (\mu (\gamma^1), \hdots, \mu (\gamma^q)). 
\end{equation*}
We can interpret $\mu_{T} (\Gamma)$ and $\mu_{D}(\Gamma)$ as generalizations of 
the virtual multiplicity to the reducible case. 
\end{rem}
\begin{example}
  \label{ex:terminal-and-divisorial}
  Consider the union $\Gamma=\gamma_1\cup\gamma_2\cup\gamma_3\cup\gamma_{4}$, where each $\gamma_i$ corresponds to the arrows in the dual graph given in Figure \ref{fig:terminal-and-divisorial-1}. For instance, $\gamma_1=(t^3,t^4)$, $\gamma_2=(t^6,t^{8}+t^{10}+t^{11})$, $\gamma_3=(t^6,t^8+t^{10}+t^{11}+t^{13})$, and $\gamma_4=(t^6,t^8+t^{10}+t^{11}-t^{13})$.
\begin{figure}[h!]
 \centering
 \includegraphics{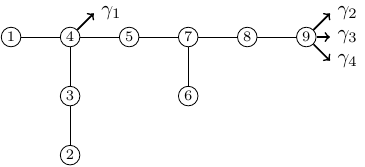}
\caption{The curve $\Gamma$ of Example \ref{ex:terminal-and-divisorial} is the union of the branches given by the $\gamma_i$ in the diagram.}
\label{fig:terminal-and-divisorial-1}
\end{figure}

The map $\tau$ is $\tau(u,y)=(u^6,y)$, and the dual graph of the desingularization of $\tau^{-1}(\Gamma)$ is schematically shown in Figure \ref{fig:terminal-and-divisorial-2} (all the centers $P_j$ are trace points). There are   $6$   terminal irreducible components of $E_{\tau}$, and   $9$   components meeting the strict transform of $\tau^{-1}(\Gamma)$, so that   $\mu_T(\Gamma)=6$ and $\mu_D(\Gamma)=9$.   
 \begin{figure}[h!]
  \centering             
  \includegraphics{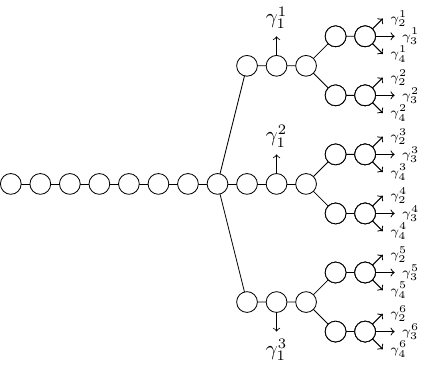}
  \caption{Dual graph of the desingularization of $\tau^{-1}(\Gamma)$, for $\Gamma$ as in Figure \ref{fig:terminal-and-divisorial-1}. No blow-up center is a corner.}\label{fig:terminal-and-divisorial-2}
\end{figure}
\end{example}
The next result provides lower bounds for $\nu_{0}({\mathcal F})$ in terms of $\Gamma$ and its desingularization. It implies Theorem \ref{teo:genr}. 
\begin{pro}
\label{pro:genr}
Let ${\mathcal F}$ be a germ of holomorphic foliation defined in a
neighborhood of $0$ in ${\mathbb C}^{2}$. Let $\Gamma$ be a germ of
singular invariant curve. Given $H\in \mathcal{I}_{\tau}$, let $c_H$ be the number of irreducible components of $\tau^{-1}(\Gamma)$ whose strict transform meets $H$. We have
  \[
    \nu_0 ({\mathcal F})  \geq N  + \sum_{H \in {\mathcal I}}  (\max(c_H,1) -1)
    \geq
    {\max \left(\mu_{T} (\Gamma), \frac{\mu_{D}(\Gamma)}{2} \right)} 
  \]
  where $N$ is the number of dicritical irreducible components of $E_{\tau}$. 
\end{pro}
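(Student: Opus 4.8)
We prove both inequalities, working upstairs after the ramification $\tau(x,y)=(x^{n},y)$, $n=\mathrm{lcm}(n_1,\dots,n_q)$. By Remark~\ref{rem:ram1} one has $\nu_0(\tau^{*}{\mathcal F})=\nu_0({\mathcal F})$, so it suffices to bound $\nu_0({\mathcal G})$ from below, where ${\mathcal G}=\tau^{*}{\mathcal F}$ and $\tau^{-1}(\Gamma)$ is a ${\mathcal G}$-invariant curve consisting of $\nu_0(\Gamma)$ smooth branches through the origin. The key preliminary observation is that, since these branches are smooth, every blow-up of $\pi_{\tau}$ after the first one is centered at a trace point (the strict transform of a smooth branch is always a graph, hence transverse to the last exceptional divisor at a smooth point of it, so it never forces a corner to be blown up); therefore $w(D)=1$ for every irreducible component $D$ of $E_{\tau}$. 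Moreover, in the final configuration each branch of $\tau^{-1}(\Gamma)$ has strict transform meeting $E_{\tau}$ transversally at a single trace point, with distinct branches meeting distinct points. All the bookkeeping below takes place on the dual graph of $E_{\tau}$ (a tree), enriched with one arrow per branch of $\tau^{-1}(\Gamma)$.

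For the first inequality I apply Hertling's formula \eqref{equ:Hertling} to ${\mathcal G}$ with the blow-up sequence $\pi_{\tau}$ (legitimate since the formula needs no desingularization of ${\mathcal G}$, cf.\ Remark~\ref{rem:hidden}), and, using $w\equiv1$, write its right-hand side as $\sum_{H\in{\mathcal I}_{\tau}}C_H+\sum_{D_j\,\text{dicritical}}\bigl(\sum_{P}\kappa_P({\mathcal G}_r,D_j)+2-v_{\overline d}(D_j)\bigr)$, where ${\mathcal G}_r$ is the strict transform of ${\mathcal G}$ under $\pi_{\tau}$ and $C_H=\sum_{D_j\in H}\sum_{P\in D_j}\kappa_P({\mathcal G}_r,D_j)$. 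I claim $C_H\ge\max(c_H,1)$ for every $H\in{\mathcal I}_{\tau}$: if $c_H=0$ this is exactly \eqref{equ:rem_contribh}; if $c_H\ge1$, each of the $c_H$ distinct trace points at which a branch of $\tau^{-1}(\Gamma)$ meets $H$ is a transverse crossing of two ${\mathcal G}_r$-invariant curves, hence a singular point, and being a trace point it contributes a term $\kappa_P=Z_P({\mathcal G}_r,D_j)\ge1$, all the remaining terms of $C_H$ being $\ge0$. For the dicritical part I discard the non-negative $\kappa$-terms and compute $\sum_{D_j\,\text{dicritical}}(2-v_{\overline d}(D_j))=2N-e$, where $e=\sum_{D_j\,\text{dicritical}}v_{\overline d}(D_j)$; contracting each $H\in{\mathcal I}_{\tau}$ to a point turns the enriched dual graph into a tree, in which (no two components of one $H$ are adjacent to the same $D_j$, else a cycle appears) $e$ is the number of edges between a dicritical vertex and an $H$-vertex, there are $N+|{\mathcal I}_{\tau}|+\nu_0(\Gamma)$ vertices of which the $\nu_0(\Gamma)$ arrows are leaves, so $e\le N+|{\mathcal I}_{\tau}|-1$ and $2N-e\ge N+1-|{\mathcal I}_{\tau}|$. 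Adding the two estimates, $\nu_0({\mathcal F})+1\ge\sum_{H}\max(c_H,1)+N+1-|{\mathcal I}_{\tau}|=N+1+\sum_{H}(\max(c_H,1)-1)$, which is the first inequality.

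The second inequality is pure combinatorics on the same tree. Every terminal component of $E_{\tau}$ is either dicritical (at most $N$ of these) or lies in some $H$; a terminal invariant component meets at least two branches of $\tau^{-1}(\Gamma)$ by Remark~\ref{rem:ram2}, and terminal components have pairwise disjoint sets of incident branches, so $H$ contains at most $\lfloor c_H/2\rfloor\le\max(c_H,1)-1$ of them, whence $\mu_T(\Gamma)\le N+\sum_H(\max(c_H,1)-1)$. For $\mu_D$, note that $\mu_D(\Gamma)$ equals the number of dicritical components met by the strict transform (at most $N$) plus $\sum_H d_H$, where $d_H$ is the number of components of $H$ met by the strict transform; then $d_H=0$ if $c_H=0$, $d_H=1$ if $c_H=1$, and $d_H\le c_H\le2(\max(c_H,1)-1)$ if $c_H\ge2$. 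Finally, the number $\ell_1$ of blocks $H$ with $c_H=1$ satisfies $\ell_1\le N$: such an $H$ contains no terminal component (again by Remark~\ref{rem:ram2}), so some leaf of the subtree $H$ has a child outside $H$, necessarily dicritical, and this child determines $H$ (it has a unique parent), so distinct blocks yield distinct dicritical components. Hence $\sum_H d_H\le N+2\sum_H(\max(c_H,1)-1)$ and $\mu_D(\Gamma)\le2N+2\sum_H(\max(c_H,1)-1)$, which finishes the proof.

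The crux is the first inequality, where Hertling's sum must be bounded below using only the invariance of $\Gamma$. The delicate feature is that the dicritical defects $2-v_{\overline d}(D_j)$ are individually unbounded below; the argument succeeds because, once all weights equal $1$, these defects are controlled globally by the tree count and total exactly $N+1-|{\mathcal I}_{\tau}|$, while each invariant block separately contributes $\max(c_H,1)$, so that the two occurrences of $|{\mathcal I}_{\tau}|$ cancel. Were one to avoid the remark that $\pi_{\tau}$ blows up only trace points, the weighted version of this tree computation would be the step demanding the most care.
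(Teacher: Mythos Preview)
Your proof is correct and follows essentially the same strategy as the paper: pull back by $\tau$ so that all weights equal $1$, apply Hertling's formula, bound each invariant block's contribution by $\max(c_H,1)$ (via Remark~\ref{rem:hidden} when $c_H=0$ and via the singular points $\gamma_k^{l,j}\cap H$ when $c_H\ge 1$), and control the dicritical defect by tree combinatorics on the dual graph of $E_\tau$. Your contraction-and-edge-count argument yielding $e\le N+|\mathcal I_\tau|-1$ is a clean repackaging of the paper's parent--child count (both establish $\sum_{D_j\text{ dicr}}(2-v_{\bar d}(D_j))\ge N+1-|\mathcal I_\tau|$), and your $\ell_1\le N$ for the $\mu_D$ bound is the same idea as the paper's $M\le N$, specialized to blocks with $c_H=1$; the paper instead bounds all non-terminal blocks with $c_H\ge 1$ and runs a slightly longer algebraic expansion. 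One phrasing to tighten: when you write ``some leaf of the subtree $H$ has a child outside $H$'', note that the root of $H$ can be a graph-theoretic leaf of $H$ whose unique child lies in $H$; the claim is saved by choosing a leaf of $H$ of maximal depth in the rooted tree (equivalently, any leaf of $H$ other than its root, or the unique vertex if $|H|=1$).
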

Notice how the first inequality depends on $\mathcal{F}$, whereas the second is completely independent of it, and requires only geometric information on $\Gamma$.

\begin{proof}
  Let $\mathcal{G}_0=\tau^{\ast}\mathcal{F}$, which satisfies
  $\nu_0(\mathcal{G}_0)=\nu_0(\mathcal{F})$, and
  $\mathcal{G}_l=(\pi_l^{\tau})^{\ast}(\mathcal{G}_0)$ for
  $1\leq l \leq r$. Notice that $w(D)=1$ for any irreducible component
  $D\subset E_{\tau}$, as $\pi^{\tau}_l$ is never the blow-up of a
  corner point.  Hertling's formula \cite{Hertling:2000} in this case
  becomes
\begin{equation*}
  \nu_0 ({\mathcal G}_0)  =
  \sum_{D_{\tau, j} \subset E_{\tau}}
  \sum_{P \in D_{\tau, j}} \kappa_P ({\mathcal G}_k, D_{\tau, j})  + 
  \sum_{D_{\tau, j} \not\subset\mathrm{Inv}(E_{\tau})} (2 - v_{\overline{d}} (D_{\tau, j})) -1 
\end{equation*}
Given $H \in \mathcal{I}_{\tau}$, the following
two inequalities hold:
\begin{equation*}
   \sum_{\substack{D \in H\\P \in D}} \kappa_{P} ({\mathcal G}_k,D) \geq c_H \ \  \mathrm{and} \ \
  \sum_{\substack{D \in H\\P \in D}} \kappa_{P} ({\mathcal G}_k,D) \geq 1 .
\end{equation*} 
The first one   is satisfied   because every strict transform of an irreducible
component of $\tau^{-1}(\Gamma)$ intersects $H$ at a singular point of
$\mathcal{G}_k$. The second one is a consequence of \cite[Proposition
3.7]{Cano-Fortuny-Ribon:2020} (see also Remark \ref{rem:hidden}).  We
deduce
\begin{equation*}
 \nu_0 ({\mathcal F}) = \nu_0 ({\mathcal G}_0) \geq
 \sum_{D_{\tau, j} \not \subset \mathrm{Inv}(E_{\tau})} (2 - v_{\overline{d}} (D_{\tau,  j})) -1
 + \sharp {\mathcal I}_{\tau}
 + \sum_{H \in {\mathcal I}_{\tau}}  (\max(c_H,1) -1).
\end{equation*}
Let $D_{\tau, j} \not\subset\mathrm{Inv}(E_{\tau})$ be a dicritical irreducible component of $E_{\tau}$, and let $H_{j,1}, \ldots, H_{j,p}$ be the elements of $\mathcal{I}_{\tau}$ intersecting $D_{\tau, j}$ which appear after blowing-up a center in $D_{\tau, j}$ (technically, if $D_{\tau, l} \subset H_{j,r}$ then $l>j$). By definition, $v_{\overline{d}}(D_{\tau,j})$ is at most $p+1$, except for $j=1$, where $v_{\overline{d}}(D_{\tau, 1})\leq p$. As a consequence, if $N$  is the number of non-invariant irreducible components of $E_{\tau}$, we have:
\begin{equation*}
  \sum_{D_{\tau, j} \not\subset\mathrm{Inv}(E_{\tau})}(2 -v_{\overline{d}}(D_{\tau, j}))
  -1 +
  \sharp \mathcal{I}_{\tau} \geq N
\end{equation*}
so that:
\begin{equation}
\label{equ:npc}
 \nu_0 ({\mathcal F}) = \nu_0 ({\mathcal G}_0) \geq
 N + \sum_{H \in {\mathcal I}_{\tau}}  (\max(c_H,1) -1)
\end{equation}
which is the first inequality of the statement.

In order to prove the second inequality, given $H \in {\mathcal I}_{\tau}$, let $t_{H}$ be the number of terminal
components in $H$. If $t_H \geq 1$ then $c_H \geq 2t_H$ because every
terminal component meets at least two branches of the strict transform
of $\tau^{-1}(\Gamma)$ (Remark \ref{rem:ram2}); hence
$c_H -1 \geq 2t_H -1 \geq t_H$. This
implies that
$$
N + \sum_{H \in {\mathcal I}_{\tau}} (\max(c_H,1) -1)
\geq \mu_{T}(\Gamma).
$$

We now compare $\nu_0(\mathcal{F})$ and ${\mu}_{D}(\Gamma)/2$. Denote by
${\mathcal T}_{\tau}$ the set of terminal elements of ${\mathcal I}_{\tau}$.
Given $H \in {\mathcal I}_{\tau}$, we denote by $d_H$ the number of irreducible
components of $H$ meeting the strict transform of $\tau^{-1}(\Gamma)$. The
inequality $c_H \geq d_H$ always holds. Moreover $c_H - 1 \geq d_H$ is
satisfied for any $H \in {\mathcal T}_{\tau}$ 
  by Remark \ref{rem:ram2}.    We
have
    \[
    2 \nu_0 ({\mathcal F})  \geq
    2N + 2 \sum_{H \in {\mathcal I}_{\tau}}  (\max(c_H,1) -1) = \star
  \]
  which, expanding all the terms and using the relations between $c_H$ and $d_H$ above, gives
  \[
    \star
    \geq N + 2 \sum_{H \in {\mathcal T}_{\tau}} d_H +
    \sum_{\substack{H
        \in {\mathcal I}_{\tau} \setminus {\mathcal T}_{\tau} \\
        c_H \geq 1}} d_H + (N - M) + \sum_{\substack{H \in {\mathcal
          I}_{\tau} \setminus {\mathcal T}_{\tau} \\ c_H \geq 1}} d_H - M
 \]
 where
 $M=\sharp \{ H \in {\mathcal I}_{\tau} \setminus {\mathcal T}_{\tau} : c_
 H \geq 1 \}$.  Any non-terminal $H \in {\mathcal I}_{\tau}$ has a non-invariant
   irreducible component of $E_{\tau}$  
 as one of its adjacent sucessors, i.e. there exists a
 non-invariant irreducible component of $E_{\tau}$ that was the result of a
 blow-up with center in a point in $H$.  Therefore $M \leq N$ holds and we
 obtain
 \[
   2 \nu_0 ({\mathcal F})   \geq N + 2 \sum_{H \in {\mathcal T}_{\tau}} d_H +
   \sum_{\substack{H \in {\mathcal I}_{\tau} \setminus {\mathcal T}_{\tau} \\c_H \geq 1}} d_H  \geq 
   {\mu}_{D} (\Gamma) + \sum_{H \in {\mathcal T}_{\tau}}
   d_H   \geq {\mu}_{D} (\Gamma)
\]
 as desired. Notice that if $\mathcal{I}_{\tau}=\emptyset$ the result is straightforward.
 \end{proof}
\begin{rem}
 Theorem \ref{teo:genr} is the analogue in the reducible case
 of \cite[Theorem 3.1]{Cano-Fortuny-Ribon:2020} (where $\mu_T(\Gamma)=\mu_D(\Gamma)=\mu(\Gamma)$).
\end{rem}
\begin{rem}
The inequality 
$\nu_{0} ({\mathcal F}) \geq \mu (\Gamma)$ is sharp if 
$\Gamma$ is   a germ of irreducible 
invariant  curve   \cite[Remark 3.11]{Cano-Fortuny-Ribon:2020}.
By considering a ramification we obtain examples of reducible invariant curves $\Gamma$
whose branches are smooth and such that $\nu_{0} ({\mathcal F}) = \mu_{T} (\Gamma)$.
\end{rem}
\begin{rem}
Consider a reducible curve $\Gamma$ consisting of $n$ smooth curves
$\gamma^1, \hdots, \gamma^n$ with $n \geq 2$. 
Assume that the   exceptional divisor $E$ of the  
 desingularization of $\Gamma$ has $n-1$   irreducible components   $D_1, \hdots, D_{n-1}$
and only the last one is terminal.
This is for instance the situation if $\gamma^j = \{ y = x^{j} \}$ for $1 \leq j \leq n$. 
Suppose that 
\begin{itemize}
\item $D_j$ is invariant if and only if $j$ is odd;
\item Let $P \in E$. Then 
$\kappa_{P} ({\mathcal F}_{n-1},\Gamma_{n-1})=1$ if $(E,P)$ is invariant and 
$P \in \Gamma_{n-1}$ and 
$\kappa_{P} ({\mathcal F}_{n-1},\Gamma_{n-1})=0$ otherwise.
\end{itemize}
Such an example   of foliation ${\mathcal F}$    can be built by using the realization theorem of Lins Neto 
\cite{Lins-Neto:1987}.
The self-intersection of $D_j$ is $-2$ if $j <n-1$ and $-1$ if $j=n-1$. 
The foliation is regular and
transverse to every  even divisor   $D_j$    in any point. The odd divisors $D_j$ with $j < n-1$ have 
  a unique (nondegenerate irreducible) singular point.
The divisor $D_{n-1}$ (if $n-1$ is odd) has  two
(nondegenerate irreducible) singular points. Such configurations are easy to build.    Once ${\mathcal F}$ is fixed, we  
choose the unique curve through every singular point in $\mathrm{Inv} (E)$ and 
for any non-invariant   irreducible component  $D_j$ of $E$   we choose an invariant curve through a trace point of $D_j$
(or two trace points if $j=n-1$).  In this way we obtain a foliation ${\mathcal F}$ that leaves
invariant a curve $\Gamma$ with the properties described above. In this case, it is easy to 
see that    $\mu_{T}(\Gamma)=1$, $\mu_{D}(\Gamma)=n-1$ and  
\[ \nu_0 ({\mathcal F}) = N + \sum_{H \in {\mathcal I}}  (\max(c_H,1) -1) = 
 \left\lceil \frac{n-1}{2} \right\rceil = 
 \left\lceil \frac{\mu_{D}(\Gamma)}{2} \right\rceil,  \]
cf. Equation (\ref{equ:npc}), where $\lceil s \rceil$ is the smallest integer greater or 
equal than $s$.  In particular Theorem \ref{teo:genr} is both interesting (since it provides 
non-trivial lower bounds for $\nu_0 ({\mathcal F})$) and sharp.  
\end{rem}

\section{Isolated invariant curves} 
In the previous section we bounded from below the multiplicity of a
singular foliation $\mathcal{F}$ in terms of invariants of a singular
curve $\Gamma$ consisting of leaves of $\mathcal{F}$. These invariants
depend only on the desingularization of $\Gamma$; in order to obtain a
lower bound in terms of the multiplicity of $\Gamma$, we need to
require $\Gamma$ to be composed of invariant branches that are
isolated somehow: one can get the multiplicity of $\Gamma$ arbitrary
large choosing branches which meet a dicritical component
of the exceptional divisor transversely. The first definition in that
direction is ``not belonging to a dicritical family'' (see
\cite{Corral-Fernandez:2006}).

\begin{defi}\label{def:isolated-corral}
  Let $\gamma$ be an irreducible ${\mathcal F}$-invariant curve. It is
  called \emph{isolated} if there is no birational morphism
  ${\pi} :(M,D) \to ({\mathbb C}^{2},0)$ such that the strict transform of
  $\gamma$ intersects $D$ transversally at a non-corner regular point of
  the lifted foliation $\pi^{\star}\mathcal{F}$. We say that a reduced
  invariant curve $\Gamma$ is isolated if all its irreducible components
  are isolated.
\end{defi}
However, this notion is too restrictive. We shall see that one only
needs to rule out specific dicritical families related to $\Gamma$
and, what is more: some non-isolated irreducible components may be
acceptable.

Recall that a \emph{normal-crossings divisor at a point $P$} is the
union of zero, one or two non-singular irreducible curves containing
$P$, and in the last case, they are mutually transverse. Let
$\hat{\Gamma}$ be a singular curve invariant for $\mathcal{F}$, and
let $\pi :M \to ({\mathbb C}^{2},0)$ be its minimal desingularization
with $E=\pi^{-1}(0)$. Given an irreducible component $\gamma$ of
$\hat{\Gamma}$, let $P_{\gamma}$ the point at which the strict
transform of $\gamma$ meets $E$, and $D_{\gamma}$ the irreducible
component of $E$ to which $P_{\gamma}$ belongs.
\begin{defi}
\label{def:weak_isolation}
 We say that a   singular curve  
 $\hat{\Gamma}$ is {\it weakly isolated} for ${\mathcal F}$ if there is a normal-crossings divisor $\overline{\Gamma}$ such that $\hat{\Gamma}=\Gamma \cup \overline{\Gamma}$, and $\Gamma\cap \overline{\Gamma}\subset \left\{ 0 \right\}$, with:
   \begin{itemize}
  \item  $\kappa_{P_\gamma} (\pi^{*} {\mathcal F}, D_{\gamma}) \geq 1$  for any irreducible 
  component $\gamma$ of $\Gamma$.
  \end{itemize}
  Any irreducible component $\eta$ of $\hat{\Gamma}$ with  
  $\kappa_{P_\eta} (\pi^{*} {\mathcal F}, D_{\eta}) = 0$ will be called \emph{null}, the other components of $\hat{\Gamma}$ will be called \emph{non-null}. Null components are, by definition, included in $\overline{\Gamma}$.
\end{defi}
Thus, a weakly isolated curve is composed of an ``important'' part
$\Gamma$ and a ``discardable'' one, $\overline{\Gamma}$. Later on the
roles of these two parts will become clear. See Figure
\ref{fig:weakly-isolated-blow-up} for an example.
\begin{rem}
A singular isolated integral curve is weakly isolated (cf. Remark \ref{rem:zero_index}).
\end{rem}
Weak isolation is  a powerful concept since it is invariant by blow-ups.
\begin{pro}
\label{pro:weak_invariance}
Let $\hat{\Gamma}$ be weakly isolated for ${\mathcal F}$ and $\pi_1$ the
blow-up of the origin. Let $P \in \pi_{1}^{-1} (0)$ and
$\hat{\Gamma}_{P}$ be the union of the invariant irreducible components of
$\pi^{-1} (\Gamma)$ (the \emph{total} transform) containing $P$. Then
$\hat{\Gamma}_{P}$ is a weakly isolated curve for
$\pi_{1}^{*} {\mathcal F}$ if $\hat{\Gamma}_P$ is singular.
\end{pro}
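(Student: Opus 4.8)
The goal is to show that weak isolation persists after a single blow-up $\pi_1$ of the origin. The strategy is to track how the defining data of weak isolation — the splitting $\hat\Gamma = \Gamma \cup \overline\Gamma$ with $\overline\Gamma$ normal-crossings, $\Gamma\cap\overline\Gamma\subset\{0\}$, and $\kappa_{P_\gamma}(\pi^*\mathcal F, D_\gamma)\ge 1$ for each branch $\gamma$ of $\Gamma$ — transforms under $\pi_1$. First I would fix $P\in\pi_1^{-1}(0)$ and let $\hat\Gamma_P$ be the union of the invariant irreducible components of $\pi_1^{-1}(\hat\Gamma)$ through $P$; this includes the strict transforms of those branches of $\hat\Gamma$ passing through $P$, together with the exceptional divisor $D_1$ if $D_1$ is $\pi_1^*\mathcal F$-invariant. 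The natural candidate splitting is $\hat\Gamma_P = \Gamma^P \cup \overline\Gamma^P$, where $\Gamma^P$ collects the strict transforms of the \emph{non-null} branches of $\hat\Gamma$ through $P$ (i.e.\ those in $\Gamma$), possibly together with $D_1$, and $\overline\Gamma^P$ collects the strict transforms of the remaining branches (those in $\overline\Gamma$) through $P$. The key point is that $\overline\Gamma$ being normal-crossings with $\Gamma\cap\overline\Gamma\subset\{0\}$ forces, after blow-up, at most one strict transform of a branch of $\overline\Gamma$ to pass through any given $P$, and it meets $D_1$ transversally at a smooth point; so $\overline\Gamma^P$ is again normal-crossings, and $\Gamma^P\cap\overline\Gamma^P\subset\{P\}$ because the branches of $\Gamma$ and $\overline\Gamma$ only meet at $0$.

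Next I would verify the $\kappa$-inequality for each branch of $\Gamma^P$. For a strict transform $\gamma_1$ of a non-null branch $\gamma$ of $\Gamma$, let $\pi'$ be the minimal desingularization of $\hat\Gamma_P$ (so that $\pi = \pi_1\circ\pi'$ up to the relevant components is the minimal desingularization of the original curve). Since $\kappa$ is an invariant attached to the point where the strict transform of $\gamma$ meets the exceptional divisor in the \emph{minimal} resolution, and this point is unchanged whether we resolve $\hat\Gamma$ all at once or factor through $\pi_1$ first, we get $\kappa_{P_{\gamma_1}}((\pi')^*\pi_1^*\mathcal F, D_{\gamma_1}) = \kappa_{P_\gamma}(\pi^*\mathcal F, D_\gamma)\ge 1$ directly. (One should be careful about the edge case where $\gamma$ is smooth and its strict transform $\gamma_1$ already meets $D_1$ transversally at a smooth point of the divisor — then the "minimal desingularization of $\hat\Gamma_P$" may not blow up further along $\gamma_1$, and one must check that $\kappa_{P_{\gamma_1}}(\pi_1^*\mathcal F, D_1)\ge 1$ in that case; but this is precisely the content of $\gamma$ being non-null, i.e.\ $\kappa_{P_\gamma}(\pi^*\mathcal F,D_\gamma)\ge 1$, since for a smooth branch the minimal resolution of $\hat\Gamma$ is the same $\pi_1$ restricted appropriately.) The remaining branch of $\Gamma^P$ to consider is $D_1$ itself, when $D_1$ is invariant and we have chosen to put it in $\Gamma^P$: here $P_{D_1}$ in the minimal desingularization of $\hat\Gamma_P$ is some point on the strict transform of $D_1$, and one needs $\kappa\ge 1$ there. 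If no further blow-up is needed (i.e.\ $D_1$ already appears terminally with the right transversality) this requires an argument; the cleanest fix is to note that whenever $D_1$ would be a null component of $\hat\Gamma_P$ we simply place it in $\overline\Gamma^P$ instead — it is a single smooth curve, hence normal-crossings by itself, and meets the strict transforms of the non-null branches transversally.

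The main obstacle, and the place where the hypothesis "$\hat\Gamma_P$ is singular" is used, is ensuring that the candidate $\Gamma^P$ is nonempty and that the splitting is legitimate: if $\hat\Gamma_P$ were just a single smooth branch, there would be nothing to check and no well-defined notion of "weakly isolated" (the definition requires a singular curve), so that degenerate case is excluded by hypothesis. The genuinely delicate verification is the bookkeeping of which component of the exceptional divisor of the \emph{composite} minimal desingularization of $\hat\Gamma_P$ plays the role of $D_{\gamma_1}$, and confirming it coincides with the $D_\gamma$ used upstairs — this is a standard but careful comparison of the two resolution trees, relying on the fact that resolving $\hat\Gamma$ minimally necessarily factors through $\pi_1$ (since $\hat\Gamma$ is singular at $0$) and that the subsequent centers and exceptional components are identical. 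Once this identification is in place, the inequality $\kappa_{P_{\gamma_1}}\ge 1$ transfers for free, and assembling the pieces — $\overline\Gamma^P$ normal-crossings, $\Gamma^P\cap\overline\Gamma^P\subset\{P\}$, $\kappa\ge 1$ on every branch of $\Gamma^P$ — gives weak isolation of $\hat\Gamma_P$ for $\pi_1^*\mathcal F$.
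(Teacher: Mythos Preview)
Your overall strategy matches the paper's: split $\hat\Gamma_P$ into a ``non-null'' part and a normal-crossings ``null'' part, and transfer the $\kappa$-condition by comparing the minimal desingularization of $\hat\Gamma_P$ with that of $\hat\Gamma$ factored through $\pi_1$. The architecture is right, but there is a genuine gap in the case where $D_1$ is \emph{not} invariant.

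You assert that the minimal desingularization $\pi_P$ of $\hat\Gamma_P$ and the portion $\pi'$ of the minimal desingularization of $\hat\Gamma$ lying over $P$ have ``identical subsequent centers and exceptional components,'' so that $D_{\gamma_1}$ (computed for $\hat\Gamma_P$) coincides with $D_\gamma$ (computed for $\hat\Gamma$) and the $\kappa$-value transfers. This fails in general. When $D_1$ is non-invariant it is not a branch of $\hat\Gamma_P$, so $\pi_P$ is under no obligation to separate the branches of $\hat\Gamma_P$ from $D_1$. If some non-null branch $\gamma$ of $\Gamma$ has strict transform $\gamma_1$ that is smooth and \emph{tangent} to $D_1$ at $P$, then $\pi$ performs additional blow-ups at points on the successive strict transforms of $D_1$ in order to make $\gamma$ transverse to the exceptional divisor, whereas $\pi_P$ omits them. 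Hence the divisor $D_{\gamma_1}$ met by the strict transform of $\gamma_1$ under $\pi_P$ is genuinely different from $D_\gamma$, and $\gamma_1$ may become null for $\pi_1^*\mathcal F$ even though $\gamma$ was non-null for $\mathcal F$. Your ``edge case'' paragraph treats the situation where $\gamma_1$ is \emph{transverse} to $D_1$, which is harmless; the tangent case is the real obstacle and you do not address it.

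The paper's remedy is precisely this observation: at most one such branch $\gamma'$ exists (only one branch of $\hat\Gamma_P$ can be tangent to $D_1$), it is smooth, and since $\overline\Gamma_P$ --- if nonempty --- is smooth and transverse to $D_1$, the pair $\gamma'\cup\overline\Gamma_P$ is still a normal-crossings divisor at $P$. One therefore enlarges the candidate $\overline\Gamma^P$ to absorb $\gamma'$, and weak isolation follows. Once you incorporate this step, the rest of your outline goes through essentially as in the paper.
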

\begin{proof} 
  Write $\hat{\Gamma}=\Gamma\cup \overline{\Gamma}$ where
  $\overline{\Gamma}$ is the normal-crossings divisor formed by the null
  components of $\hat{\Gamma}$, and $\hat{\pi}=\pi_1\circ \pi^{\prime}$ a
  minimal desingularization of $\hat{\Gamma}$. Notice that
  $\overline{\Gamma}$ may perfectly be empty. Denote by
  $\overline{\Gamma}_P$ the strict transform of $\overline{\Gamma}$ at
  $P$, and let $a$ be the number of irreducible components of
  $\overline{\Gamma}_{P}$. As $\overline{\Gamma}$ has normal crossings,
  $a\leq 1$, and if $a=1$ then $\overline{\Gamma}_P$ is transverse to
  $D_1=\pi_1^{-1}(0,0)$.  Let $\pi_P$ be the minimal desingularization of
  $\hat{\Gamma}_P$. By definition, $\pi_P$ (as a sequence of point
  blow-ups) is contained in the sequence of point blow-ups corresponding
  to $\pi^{\prime}$.  We consider two cases, depending on the invariance
  of $D_1$ by $\mathcal{F}_1$.
 
  \emph{Case $D_{1}$ invariant.}  The desingularization provided by
  $\pi'$ of $\hat{\Gamma}_{P}$ and $\pi_P$ coincide because
  $D_1\subset \hat{\Gamma}_P$.  Thus, the null irreducible components
  of $\hat{\Gamma}_{P}$ are $\overline{\Gamma}_P$ (if $a \geq 1$) and
  maybe $D_1$ by hypothesis.  Therefore, $\hat{\Gamma}_P$ is weakly
  isolated.
 
  \emph{Case $D_{1}$ non-invariant.}  The only blow-ups in
  $\pi^{\prime}$ not belonging to $\pi_P$ must be of points in the
  successive strict transforms of $D_1$. Thus, if non-empty, this
  sequence corresponds to a single smooth irreducible component
  $\gamma^{\prime}$ of $\hat{\Gamma}_P$ tangent to $D_1$, and this
  curve might be null for $\pi_1^{\ast}\mathcal{F}$ (as it is already
  smooth at $P$).

Anyway,
   $\hat{\Gamma}_P$ is indeed weakly isolated for $\pi_1^{\ast}\mathcal{F}$, as the only possible null components are $\gamma^{\prime}$ and $\overline{\Gamma}_P$, which are transverse.
\end{proof}
\begin{example}\label{ex:weakly-isolated}
  Consider a foliation having the curves   $\overline{\Gamma} \equiv (y=0)$ and
  $\Gamma \equiv (y^2-x^3=0)$   as separatrices, and such that if $\pi$
  is the desingularization of   $\hat{\Gamma} := \Gamma \cup \overline{\Gamma}$,   then
  $D_1$ and $D_2$ are non-invariant,   $\overline{\Gamma} \cap D_2$   is non-singular for
  $\mathcal{F}_2$, and $D_3$ is invariant (see Figure
  \ref{fig:weakly-isolated-blow-up}).   The curve
  $\hat{\Gamma}$ is weakly isolated but not isolated. Moreover,   the strict transform of $\hat{\Gamma}$ by $\pi_1$  
  is also weakly isolated.
\end{example}
\begin{figure}[h!]
  \centering
   \includegraphics{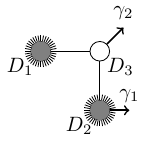}
\caption{See Example \ref{ex:weakly-isolated}. If $\mathcal{F}_{3}$ is
  reduced and $\gamma_1, \gamma_2$ are invariant, then
  $\Gamma=\gamma_1\cup \gamma_2$ is weakly isolated but not
  isolated. Its strict transform $\pi_1^{*}(\Gamma)$ is weakly
  isolated but not isolated.}\label{fig:weakly-isolated-blow-up}
\end{figure}

In this section, $\hat{\Gamma}=\gamma^1 \cup \ldots \cup \gamma^q$
will be a weakly isolated curve for $\mathcal{F}$, and
$\pi=\pi_1\circ\cdots\circ\pi_k$ will denote its minimal
desingularization, the rest of the notation being as in Section
\ref{sec:setting}. Recall that $\hat{\Gamma}_k$ meets
$E_k$ transversely. We remark again that we only need to study the
desingularization of $\hat{\Gamma}$, and not of $\mathcal{F}$. The
only hypothesis on $\mathcal{F}$ is that $\hat{\Gamma}$ is weakly isolated.


We do not follow the ramification approach of Section \ref{sec:last},
as the isolation property for a curve $\Gamma$ is not invariant under
ramification even if the ramification locus is different from
$\Gamma$.  Moreover, our approach to the study of the global
Poincar\'{e} problem through local methods requires calculating
vanishing orders of foliations along invariant branches $\gamma$ of an
invariant curve $\Gamma$. Later on, we shall use an iterative formula
(Eq. \ref{equ:ziter}) to calculate these vanishing orders. Since the
formula collects data associated to a desingularization of $\Gamma$,
it is convenient to avoid ramifications.

 
 Our approach to providing lower bounds for the multiplicity of a foliation ${\mathcal F}$ 
 in terms of the multiplicity of a weakly isolated curve consists in   
 dividing $E_k$ into connected unions of
irreducible components: each starting with the exceptional divisor
corresponding to the blow-up of a trace point of a non-invariant
component. In order to carry out this division, we require some
nomenclature.

\begin{defi}\label{def:descendant}
  Let $0\leq l, l^{\prime} \leq k-1$ be indices of two centers of
  $\pi$. We say that $P_{l^{\prime}}$ is a
  \emph{descendant} of $P_l$ (and $P_l$ is an \emph{ancestor} of
  $P_{l\prime}$) if
  $\pi_{l+1}\circ \cdots \circ \pi_{l^{\prime}}(P_{l\prime})=P_l$
  or $l = l^{\prime}$.  
\end{defi}
For simplicity in later arguments, we consider any point $P_l$ both an
ancestor and a descendant of itself.

In Hertling's formula \eqref{equ:Hertling}
 \begin{equation*}
\nu_{P_0} ({\mathcal F})  +1 = \sum_{D_j\subset E_k} \sum_{P \in D_j} w(D_j) \kappa_P ({\mathcal F}_k, D_j)  
 +  \sum_{D_j  \not \subset \mathrm{Inv}(E_{k})} w(D_j)  (2 - v_{\overline{d}} (D_j)) 
\end{equation*}
the \emph{problematic} terms are the ones in the last summation, which
correspond to non-invariant components of the exceptional
divisor. These points can be of two different kinds: either they arise
for the first time from trace points, or they do from corners. The
former are the key ones   to   divide $E_k$ (and, as a
consequence, Hertling's formula) into \emph{controllable}
parts. Properly speaking: given $l\in \left\{ 0,\ldots, k \right\}$,
we shall denote by $D(P_l)$ the set of irreducible components of $E_l$
containing $P_l$. We say that $P_l$ is a \emph{separating center}
(\emph{s.c.}  for brevity) if either $l=0$ (whence
$D(P_0)=\emptyset)$) or if $D(P_l)$ is a singleton and its unique
element is non-invariant for $\mathcal{F}_l$. Using this notion, we
can divide the exceptional divisor $E_k$ into connected sets each
starting ``immediately after'' a separating center, as in Figure
\ref{fig:proof-reducible-case}, using the following definition:
\begin{defi}\label{def:controllable-components}
  Let $0\leq l \leq k-1$ be an index such that $P_l$ is a separating
  center. We shall denote by $\mathcal{D}_l$ the set of divisors
  $D_{l^{\prime}+1}$, with $l^{\prime} \geq l$, such that $P_{l^{\prime}}$ is a descendant of $P_l$
  and $P_l$ is the unique separating center among the $P_{\ell}$ that are both descendants
  of $P_l$ and ancestors of $P_{l^{\prime}}$.   This gives a partition
  of $\left\{ 0,\ldots, k-1 \right\}$ (or, equivalently, a subdivision
  of $E_k$ into connected unions of irreducible components):
  \begin{equation*}
    E_k = \bigcup_{P_l\ \mathrm{s.c.}} \mathcal{D}_l.
  \end{equation*}
  See Figure \ref{fig:proof-reducible-case} for an example.
\end{defi}
The point $P_0$ is always a separating center by convention, so that
$\mathcal{D}_0$ always contains $D_1$ at least. Each set
$\mathcal{D}_l$ is, essentially, a \emph{controllable} part of $E_{k}$
in Hertling's formula, as the next lemmas show.

From now on, $\hat{\Gamma}$ will denote a weakly isolated curve for
$\mathcal{F}$ with $\hat{\Gamma}=\Gamma\cup \overline{\Gamma}$, where
$\overline{\Gamma}$ is the union of the null components of
$\hat{\Gamma}$. Given a separating center $P_l$, the curves
$\gamma^{l,1},\ldots, \gamma^{l,n_l}$    are    the irreducible
components of $\Gamma$   whose strict transform $\gamma_k^{l,j}$ satisfies  
$\gamma_k^{l,j}\cap \mathcal{D}_l\neq \emptyset$, and
$\overline{\gamma}^{l,1},\ldots, \overline{\gamma}^{l,m_l}$   are   the
irreducible components of $\overline{\Gamma}$ with
$\overline{\gamma}_{k}^{l,j}\cap\mathcal{D}_l\neq\emptyset $.

\begin{figure}[h!]
  \centering
  \scalebox{0.8}{
  \includegraphics{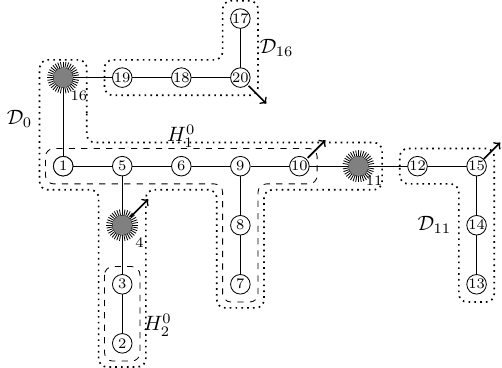}}
  \caption{Separating centers and the components $\mathcal{D}_l$.  Each
  component of the exceptional divisor $E_{k}$ is numbered according
  to its order of apparition. Arrows indicate irreducible components
  of $\hat{\Gamma}$.  Notice how
  ${\mathcal D}_0\supset H_1^0 \cup H_2^0$ contains two connected
  components of $\mathcal{I}$ and three non-invariant irreducible
  components of $E_{k}$.
  }
\label{fig:proof-reducible-case}
\end{figure}
If $P_l$ is a separating center, the component $\mathcal{D}_l$   satisfies   $\mathcal{D}_l\cap D(P_l)\neq \emptyset$ and
$\pi_k\circ\cdots\circ\pi_{l+1}(\mathcal{D}_l)=P_l$ and such properties characterize $\mathcal{D}_l$.   In Hertling's
formula \eqref{equ:Hertling}, the sum corresponding to non-invariant
components:
\begin{equation}\label{eq:dicritical-part}
  \sum_{D_j  \not \subset \mathrm{Inv}(E_{k})}
  w(D_j) (2 -v_{\overline{d}}(D_j))
\end{equation}
is now best divided according to the following conditions:
\begin{enumerate}
\item For each separating center $l$ with $l>0$, there is a single
  irreducible component $\overline{D}(P_l)\in\mathcal{D}_l$ with
  $\overline{D}(P_l)\cap D(P_l)\neq \emptyset$.
\item By convention, we set $\overline{D}(P_0)=\emptyset$ and
  $w(D(P_0))=1$.
\item Finally, all the other intersections between a non-invariant
  component $D_j$ and another component of $E$ arise, by
  definition,    with components of the form $\mathcal{D}_r$ where $P_r$ is an ancestor of $P_{j-1}$.
  There are at most two such components.  
\end{enumerate}
Using these three properties, recalling the definition of
\emph{non-dicritical} valence $v_{\overline{d}}(D_j)$ (Definition
\ref{def:non-dicritical-valence}), for each separating center $P_l$:
we set $\delta(P_l)=1$ if either $l=0$ or $\overline{D}(P_l)$ is
invariant, and $\delta(P_l)=0$ otherwise. Define, for any
non-invariant $D_j$ (for the sake of simplicity, and where an empty
summation is $0$):
\begin{equation*}
  v_{\overline{d}}^a(D_j) = v_{\overline{d}}(D_j) -
  \sum_{\substack{P\in D_j\\P\mathrm{\ s.c.}}}\delta(P).
\end{equation*}
\begin{rem} \label{rem:ancestor_valence}
The value $v_{\overline{d}}^a(D_j)$ is the part of the non-dicritical valence of $D_j$ related to its ancestors, i.e. 
to components ${\mathcal D}_r$ where $P_r$ is an ancestor of $P_{j-1}$.
It always satisfies $ v_{\overline{d}}^a(D_j) \leq c \leq 2$ where $c$ is the number of irreducible components of $E_{j-1}$ containing the point
$P_{j-1}$. For $j=1$, one always has $c=0$.
\end{rem}  
With this convention, the sum \eqref{eq:dicritical-part} can be
regrouped in the following way:
\begin{equation}
  \label{eq:dicritical-part-divided}
  \begin{split}
  &\sum_{D_j \not \subset \mathrm{Inv}(E_k)} w(D_j) (2
  -v_{\overline{d}}(D_j))
  =\\
  &1 + \sum_{P_l \mathrm{\ s.c.}}\bigg(
    - \delta(P_l) w(D(P_{l}))\bigg) +
  \sum_
  {D_j \not \subset \mathrm{Inv}(E_k)}
  w(D_j) (2 - v_{\overline{d}}^{a} (D_j)).
\end{split}
\end{equation}
Notice the term $1$ in the right hand side of Equation
(\ref{eq:dicritical-part-divided}): it is required in order to remove
the ``spurious'' $-\delta(P_0)w(D(P_0))$, as
$D(P_0)=\emptyset$. However, it is exactly what will cancel the $+1$
in Hertling's formula \eqref{equ:Hertling}.

At this point, we proceed to study each component $\mathcal{D}_l$
in isolation.
The positive contributions in Hertling's formula \eqref{equ:Hertling}
pertaining to a specific component $\mathcal{D}_l$ are easily bounded
in terms of either $\hat{\Gamma}$ or $P_l$:

\begin{lem}
  With the notations above, if $\hat{\Gamma}$ is weakly isolated for
  $\mathcal{F}$ and $P_l$ is a separating center, then
\begin{equation}
    \label{eq:non-inv-branches}
    \sum_{\substack{D\in {\mathcal D}_l\\ P\in D}} w(D) \kappa_P(\mathcal{F}_k,D)
    \geq \sum_{j=1}^{n_l} \nu_0(\gamma^{l, j}),
  \end{equation}
  Also, for any $H\in \mathcal{I}$ with $H\subset \mathcal{D}_l$:
 \begin{equation}
 \label{equ:contribh}
    \sum_{D_j \in H} \sum_{P \in D_j} w(D_j) \kappa_P ({\mathcal F}_k, D_j) \geq w(D(P_l)).
\end{equation}
\end{lem}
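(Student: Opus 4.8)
The plan is to establish the two inequalities independently, in each case by isolating the summands of Hertling's formula that ``live'' inside the block $\mathcal{D}_l$ and combining weak isolation with the additive behaviour of the weights $w(\cdot)$ under blow-up.

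For \eqref{eq:non-inv-branches}, fix one of the components $\gamma^{l,j}$ of $\Gamma$ meeting the block. Since $\pi$ is the minimal desingularization of $\hat{\Gamma}$, the strict transform $\gamma_k^{l,j}$ is smooth, the strict transforms of the branches of $\hat{\Gamma}$ are pairwise disjoint, and $\gamma_k^{l,j}$ meets $E_k$ transversally at a single trace point $P_{\gamma^{l,j}}$ lying on $D_{\gamma^{l,j}}$; by the hypothesis $\gamma_k^{l,j}\cap\mathcal{D}_l\neq\emptyset$ we have $D_{\gamma^{l,j}}\in\mathcal{D}_l$. Because $\gamma_k^{l,j}$ is itself a smooth branch transverse to $D_{\gamma^{l,j}}$ at a trace point, the definition of the weight gives $w(D_{\gamma^{l,j}})=\nu_0(\gamma^{l,j})$, and weak isolation (Definition \ref{def:weak_isolation}) gives $\kappa_{P_{\gamma^{l,j}}}(\mathcal{F}_k,D_{\gamma^{l,j}})\geq 1$, so that $w(D_{\gamma^{l,j}})\kappa_{P_{\gamma^{l,j}}}(\mathcal{F}_k,D_{\gamma^{l,j}})\geq \nu_0(\gamma^{l,j})$. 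As $j$ ranges over $1,\dots,n_l$ the points $P_{\gamma^{l,j}}$ are pairwise distinct and each lies on a unique component of $E_k$, so these estimates contribute $n_l$ distinct, non-overlapping summands to the left-hand side of \eqref{eq:non-inv-branches}; since all the remaining summands $w(D)\kappa_P(\mathcal{F}_k,D)$ are non-negative, summing over $j$ yields \eqref{eq:non-inv-branches}.

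For \eqref{equ:contribh}, I would first invoke \cite[Proposition 3.7]{Cano-Fortuny-Ribon:2020} (recalled in Remark \ref{rem:hidden}), which gives
\begin{equation*}
\sum_{D_j\in H}\sum_{P\in D_j} w(D_j)\kappa_P(\mathcal{F}_k,D_j)\geq \min_{D_j\in H} w(D_j),
\end{equation*}
so it suffices to prove that $w(D)\geq w(D(P_l))$ for every $D\in\mathcal{D}_l$ (with the convention $w(D(P_0))=1$, for which the statement is trivial since all weights are at least $1$). I would prove this by induction on the order of appearance of the components of $\mathcal{D}_l$. The first one, $\overline{D}(P_l)=D_{l+1}$, is created by blowing up $P_l$, which by the definition of a separating center is a trace point of the single component $D(P_l)$, so $w(D_{l+1})=w(D(P_l))$. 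Every later component $D_{l'+1}\in\mathcal{D}_l$ (with $l'>l$) is born from a center $P_{l'}$ which is a descendant of $P_l$ lying in the exceptional locus over $P_l$, a region whose only pre-existing exceptional component is $D(P_l)$; moreover $P_{l'}$ is not a separating center, hence it cannot be a trace point of $D(P_l)$ alone (that non-invariant component being a singleton at $P_{l'}$ would make $P_{l'}$ a separating center, contradicting the definition of $\mathcal{D}_l$). Thus either $P_{l'}$ is a trace point of a previously-born, necessarily invariant, component of $\mathcal{D}_l$, in which case $w(D_{l'+1})$ equals its weight and is $\geq w(D(P_l))$ by the inductive hypothesis; or $P_{l'}$ is a corner of two components, each equal to $D(P_l)$ or to a previously-born component of $\mathcal{D}_l$, so that $w(D_{l'+1})$ is a sum of two weights, at least one of which is $\geq w(D(P_l))$. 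In both cases $w(D_{l'+1})\geq w(D(P_l))$, the induction closes, and \eqref{equ:contribh} follows.

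The step I expect to require the most care is the weight comparison in the second part: one must verify that, inside the sub-desingularization defining $\mathcal{D}_l$, the only exceptional component inherited from before $P_l$ is $D(P_l)$ itself, and that no blow-up contributing to $\mathcal{D}_l$ has its center at a trace point of $D(P_l)$ --- this is precisely where the defining property of $\mathcal{D}_l$ (that $P_l$ is the unique separating center among the relevant ancestors) enters. The first inequality, by contrast, is almost immediate once one observes that the weight of a component met transversally by a smooth branch equals that branch's multiplicity.
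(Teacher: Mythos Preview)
Your proof is correct and follows essentially the same route as the paper. For the first inequality the arguments are identical: weak isolation gives $\kappa_{P_{\gamma^{l,j}}}\geq 1$, and transversality at a trace point gives $w(D_{\gamma^{l,j}})=\nu_0(\gamma^{l,j})$. For the second inequality the paper simply cites Remark~\ref{rem:hidden} (i.e.\ \cite[Proposition~3.7]{Cano-Fortuny-Ribon:2020}), leaving implicit the step $\min_{D_j\in H} w(D_j)\geq w(D(P_l))$; you supply this missing step explicitly via the induction on the order of appearance of components in $\mathcal{D}_l$, which is a useful clarification rather than a different approach.
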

\begin{proof}
  The first inequality holds because
  $\kappa_{P_j^l} (\mathcal{F}' ,D)\geq 1$ at
  $\{P_j^l\}=\gamma_{k}^{l,j} \cap E$ by hypothesis and
  $\nu_0(\gamma^{l,j}) = w (D(P_j^l))$, where $D(P_j^l)$ is the unique
  irreducible component of $E$ that contains $P_{j}^{l}$.
  The second one follows from   Remark \ref{rem:hidden}.  
\end{proof}

We now study each controllable part. Define, for a separating center $P_l$:
  \begin{equation*}
    {\mathcal E}_l =\sum_{D_j \in {\mathcal D}_l}
    \sum_{P \in D_j} w(D_j) \kappa_P ({\mathcal F}_k, D_j)  
    - \delta(P_l) w(D(P_{l})) + 
    \sum_{\substack{D_j  \not \subset \mathrm{Inv}(E_k) \\
        D_j \in {\mathcal D}_l}}
    w(D_j)  (2 - v_{\overline{d}}^{a} (D_j)).
  \end{equation*}
  (the part of Hertling's formula corresponding to
  $\mathcal{D}_l$). Notice that we do not care if $l=0$ or not in this
  definition. The following result is the crux of this section:

\begin{lem}\label{lem:epsilon-l}
  With the notations above, $\mathcal{E}_l\geq w(D(P_l))$ and
  \begin{equation}
    \label{equ:el}
    {\mathcal E}_{l} \geq \frac{1}{2}
    \sum_{j=1}^{n_l} \nu_0 (\gamma^{l,j}) +
    \frac{1}{2} \sum_{j=1}^{m_l} \nu_0 (\overline{\gamma}^{l,j})    .
  \end{equation}
\end{lem}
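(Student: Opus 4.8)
The plan is to analyze the quantity $\mathcal{E}_l$ by treating $\mathcal{D}_l$ as a sub-tree rooted "immediately after" the separating center $P_l$, and running essentially the same kind of counting argument that proved Proposition~\ref{pro:genr}, but now localized to one piece of the partition and using the weak isolation hypothesis instead of the ramification structure. First I would record the elementary combinatorial facts about $\mathcal{D}_l$: the unique component $\overline{D}(P_l)\in\mathcal{D}_l$ meeting $D(P_l)$ has $w(\overline{D}(P_l))=w(D(P_l))$ (trace-point blow-up), and every other non-invariant $D_j\in\mathcal{D}_l$ satisfies $v_{\overline{d}}^a(D_j)\leq 2$ by Remark~\ref{rem:ancestor_valence}, with $v_{\overline{d}}^a(D_1)=0$ when $l=0$. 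Summing the telescoping contributions $w(D_j)(2-v_{\overline{d}}^a(D_j))$ over the non-invariant components of $\mathcal{D}_l$ and comparing with $-\delta(P_l)w(D(P_l))$ should yield, after the same manipulation as in the proof of Proposition~\ref{pro:genr}, a lower bound of the form
\[
  \Big(\sum_{\substack{D_j\not\subset\mathrm{Inv}(E_k)\\ D_j\in\mathcal{D}_l}} w(D_j)(2-v_{\overline{d}}^a(D_j))\Big) - \delta(P_l)w(D(P_l)) \;\geq\; N_l - \sharp\{H\in\mathcal{I}:H\subset\mathcal{D}_l\},
\]
where $N_l$ is the number of non-invariant components of $\mathcal{D}_l$; the point is that each connected invariant block $H\subset\mathcal{D}_l$ is "charged" once by an adjacent successor non-invariant component (every non-terminal $H$ has a non-invariant adjacent successor inside $\mathcal{D}_l$ since $\mathcal{D}_l$ ends exactly at the next separating centers), and the boundary term $-\delta(P_l)w(D(P_l))$ is absorbed because when $\delta(P_l)=1$ the root $\overline{D}(P_l)$ is itself invariant and lies in some $H$.

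Next I would combine this with the two positive estimates from the preceding Lemma. Adding $\sum_{H\subset\mathcal{D}_l}\big(\sum_{D_j\in H,P\in D_j}w(D_j)\kappa_P\big)\geq \sum_H w(D(P_l))$ — no, more carefully: one splits the term $\sum_{D_j\in\mathcal{D}_l}\sum_P w(D_j)\kappa_P$ into the contributions of invariant blocks $H\subset\mathcal{D}_l$ and the contributions along non-invariant components. For the first inequality $\mathcal{E}_l\geq w(D(P_l))$ I would argue: if $\delta(P_l)=0$ there is a surplus from $N_l\geq 1$ (the component $\overline{D}(P_l)$ is non-invariant, so $N_l\geq 1$ and $N_l - \sharp\mathcal{I}|_{\mathcal{D}_l}$ together with one invariant-block contribution of size $\geq w(D(P_l))$ — using \eqref{equ:contribh} — delivers $w(D(P_l))$); if $\delta(P_l)=1$, then $\overline{D}(P_l)$ is invariant, it lies in some $H_0\subset\mathcal{D}_l$, and \eqref{equ:contribh} applied to $H_0$ gives exactly $w(D(P_l))$, which cancels the $-\delta(P_l)w(D(P_l))$ term; one then still has to check the remaining terms are nonnegative, which follows from $N_l\geq\sharp\mathcal{I}|_{\mathcal{D}_l}$ by the adjacent-successor argument above.

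For the sharper inequality \eqref{equ:el} I would use \eqref{eq:non-inv-branches}, which bounds the $\kappa$-sum over $\mathcal{D}_l$ below by $\sum_{j=1}^{n_l}\nu_0(\gamma^{l,j})$, together with the observation that each null branch $\overline{\gamma}^{l,j}$ of $\overline{\Gamma}$ meeting $\mathcal{D}_l$ contributes $\nu_0(\overline{\gamma}^{l,j})$ to the weight $w(\overline{D}(P_l))$ or to a $v_{\overline{d}}^a$-reduction term — so the non-invariant/valence part of $\mathcal{E}_l$ controls $\sum_j \nu_0(\overline{\gamma}^{l,j})$. Taking the average of the bound "$\mathcal{E}_l\geq\sum_j\nu_0(\gamma^{l,j})$" with a companion bound "$\mathcal{E}_l\geq\sum_j\nu_0(\overline{\gamma}^{l,j}) + (\text{nonneg})$" gives the factor $1/2$ in front of both sums. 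The main obstacle I anticipate is exactly this bookkeeping of the null branches $\overline{\gamma}^{l,j}$: since null components carry $\kappa=0$, their multiplicity does not appear in \eqref{eq:non-inv-branches}, so one must recover $\sum_j\nu_0(\overline{\gamma}^{l,j})$ purely from the weights and non-dicritical valences of the non-invariant components of $\mathcal{D}_l$ — i.e. from the fact that a null branch is tangent to, or passes through a trace point of, a dicritical component whose weight then contains $\nu_0(\overline{\gamma}^{l,j})$ as a summand, or whose $v_{\overline{d}}^a$ is correspondingly smaller. Making that accounting precise, and checking it meshes with the telescoping so that no weight is double-counted against both a $\gamma^{l,j}$ and a $\overline{\gamma}^{l,j}$, is the delicate step; everything else is the same telescoping-and-charging scheme already used in Proposition~\ref{pro:genr}.
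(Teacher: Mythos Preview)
There are two genuine gaps. First, your argument for $\mathcal{E}_l\geq w(D(P_l))$ in the case $\delta(P_l)=1$ only reaches $\mathcal{E}_l\geq 0$: you spend one invariant-block contribution from \eqref{equ:contribh} to cancel $-w(D(P_l))$ and then note the remainder is nonnegative, but nothing is left to produce the required surplus $w(D(P_l))$. Second, the ``averaging'' strategy for \eqref{equ:el} does not work as written, because neither of the two bounds you wish to average holds separately. The $\kappa$-sum alone gives $\sum_j\nu_0(\gamma^{l,j})$ by \eqref{eq:non-inv-branches}, but the remaining part of $\mathcal{E}_l$ can equal $-w(D(P_l))$ (take $\delta(P_l)=1$ with $\mathcal{D}_l$ entirely invariant), so the bound $\mathcal{E}_l\geq\sum_j\nu_0(\gamma^{l,j})$ is false in general; the companion bound for the null branches has the same deficit. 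The telescoping scheme of Proposition~\ref{pro:genr} does not transplant either: there every weight was $1$ because of the ramification, whereas here weights are arbitrary, and your displayed inequality with $N_l-\sharp\mathcal{I}|_{\mathcal{D}_l}$ on the right discards precisely the weight information needed.

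The paper's route is different and rests on two points you do not invoke. The first is that $\overline{\Gamma}$ is a \emph{normal-crossings} divisor, so each null branch is smooth; this forces $w(D(P_l))=1$ whenever $m_l\geq 1$ and guarantees that every null branch meets a non-invariant $D_{r_j}\in\mathcal{D}_l$ with $v_{\overline{d}}^a(D_{r_j})\leq 1$, yielding in one stroke
\[
  \mathcal{E}_l \;\geq\; \sum_{j=1}^{n_l}\nu_0(\gamma^{l,j}) + \sum_{j=1}^{m_l}\nu_0(\overline{\gamma}^{l,j}) - w(D(P_l)).
\]
The second is a dichotomy based on divisibility: every $\nu_0(\gamma^{l,j})$ and $\nu_0(\overline{\gamma}^{l,j})$ is a multiple of $w(D(P_l))$, so the total $\Sigma$ on the right is either $\geq 2\,w(D(P_l))$, in which case both claims follow immediately from the displayed bound, or $\leq w(D(P_l))$. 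In the latter case minimality of the desingularization forces a descendant separating center, hence a non-invariant $D_\ell\in\mathcal{D}_l$; keeping only the $D_\ell$-term in the valence sum and applying \eqref{equ:contribh} once for each of the $v_{\overline{d}}^a(D_\ell)$ invariant blocks it borders gives $\mathcal{E}_l\geq(2-\delta(P_l))\,w(D(P_l))\geq w(D(P_l))\geq \Sigma/2$. This dichotomy is the organizing idea your plan is missing.
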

\begin{proof}
  Denote by $P_j^l$ the intersection $P_j^l=\gamma^{l,j}_k\cap E$ (and
  the same for $\overline{P}_j^l$ and
  $\overline{\gamma}^{l,j}$). Since
  $(\pi_{l+1} \circ \hdots \circ \pi_{k}) (P_{j}^{l}) = P_l$ and the
  analogous property for $\overline{\gamma}_{k}^{l,r} \cap E$ holds, it
  follows that the multiplicities of the $\gamma^{l,j}$ and
  $\overline{\gamma}^{l,r}$ are integer multiples of $w(D(P_l))$ for
  all $1 \leq j \leq n_{l}$ and $1 \leq r \leq m_l$.

  The first center $P_0$ is special because it has no ancestor. Thus, the case $D_1$ non-invariant needs to be studied separately. In this case, the unique divisor in
  ${\mathcal D}_0$ is $D_1$ and  we get
$$   {\mathcal E}_{0} \geq \sum_{j=1}^{n_0} \nu_0 (\gamma^{0,j}) +1 \geq 
\frac{1}{2} \sum_{j=1}^{n_0} \nu_0 (\gamma^{0,j})  +1 \geq \frac{1}{2} \sum_{j=1}^{n_0} \nu_0 (\gamma^{0,j})  + \frac{1}{2} \sum_{j=1}^{m_0} \nu_0 (\overline{\gamma}^{0,j})  $$
by Equation (\ref{eq:non-inv-branches}), and this case is finished.

If $l >0$ or $D_1$ is invariant, then
$\overline{\gamma}^{l,1}, \hdots, \overline{\gamma}^{l,m_l}$ intersect
${\mathcal D}_l$ in $m_l$ non-invariant divisors
$D_{r_1}, \hdots, D_{r_{m_l}}$ that satisfy
$v_{\overline{d}}^{a} (D_{r_{\ell}}) \leq 1$ for
$1 \leq \ell \leq m_l$, since the irreducible components of
$\overline{\Gamma}$ are smooth,   by Remark \ref{rem:ancestor_valence}.   Using Equation \eqref{eq:non-inv-branches},
we deduce:
  \begin{equation}
\label{equ:ely}
\begin{split}
  {\mathcal E}_{l} \geq \sum_{j=1}^{n_l} \nu_0 (\gamma^{l,j}) +
  \sum_{\ell=1}^{m_l}  w(D(P_l))  (2 -v_{\overline{d}}^{a} (D_{r_{\ell}})) -
  w(D(P_l))  \geq   \\ 
  \sum_{j=1}^{n_l} \nu_0 (\gamma^{l,j}) +  \sum_{j=1}^{m_l} \nu_0 (\overline{\gamma}^{l,j})  - w(D(P_l))   .
\end{split}
\end{equation} 
Notice that $w(D(P_l))=1$ if $m_l \geq 1$ since $\overline{\Gamma}$ consists of smooth branches. The inequality (\ref{equ:el}) holds whenever
 \begin{equation}
 \label{equ:condition} 
 \sum_{j=1}^{n_l} \nu_0 (\gamma^{l,j}) +  \sum_{j=1}^{m_l} \nu_0 (\overline{\gamma}^{l,j})  \geq 2 w(D(P_l)).
\end{equation}
 
Since the desingularization of $\Gamma \cup \overline{\Gamma}$ is
minimal, \eqref{equ:condition} holds if no descendant $Q$ of $P_l$ is
a separating center.  Assume, then, that $P_l$ does not satisfy
Property (\ref{equ:condition}) and hence $P_l$ has a descendant which
is a separating center. Since (\ref{equ:condition}) does not hold and
the left hand side is a multiple of $w(D(P_l))$, it follows that it is
less than or equal to $w(D(P_l))$. Thus, it suffices to show that
${\mathcal E}_l \geq w(D(P_l))$.

The hypothesis on $P_l$ implies that there exists a non-invariant
divisor $D_{\ell}$ in ${\mathcal D}_l$. This gives
 \begin{equation}
 \label{equ:el3}
  {\mathcal E}_{l} \geq \sum_{D_j \in {\mathcal D}_l} \sum_{P \in D_j} w(D_j) \kappa_P ({\mathcal F}_k, D_j)   + (2 - v_{\overline{d}}^{a} (D_{\ell}) - \delta(P_l))  w (D(P_l)) .
  \end{equation}
 Since ${\mathcal D}_l$ contains at least $v_{\overline{d}}^{a} (D_{\ell})$ elements of ${\mathcal I}$, then
$$   {\mathcal E}_{l} \geq  (2 -  \delta(P_l))  w (D^{l})   \geq w (D^{l})  $$
by Equation (\ref{equ:contribh}), and the result follows.
\end{proof}
The main result of this section is now straightforward,   it is Theorem \ref{teo:poincare-reducible-intro} that we restate here
for the sake of the reader.  
\begin{teo}\label{teo:poincare-reducible}  
Let $\hat{\Gamma}$ be a weakly isolated curve for ${\mathcal F}$, where ${\mathcal F}$ is a 
 germ of holomorphic
  foliation $\mathcal{F}$ defined in a neighbourhood of the origin in
  $\mathbb{C}^2$.
    Then $\nu_0(\mathcal{F})\geq \nu_0(\hat{\Gamma})/2$.  
\end{teo}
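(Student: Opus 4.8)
The plan is to assemble the estimate from Hertling's formula \eqref{equ:Hertling}, the regrouping \eqref{eq:dicritical-part-divided} of its dicritical part, and Lemma \ref{lem:epsilon-l}. First I would substitute \eqref{eq:dicritical-part-divided} into \eqref{equ:Hertling}. The spurious $+1$ produced on the right hand side by the term $-\delta(P_0)w(D(P_0))$ (which vanishes, as $D(P_0)=\emptyset$) cancels exactly the $+1$ on the left of Hertling's formula, leaving
\[
  \nu_0(\mathcal{F}) = \sum_{D_j \subset E_k} \sum_{P \in D_j} w(D_j)\kappa_P(\mathcal{F}_k, D_j) + \sum_{P_l\ \mathrm{s.c.}}\bigl(-\delta(P_l)\,w(D(P_l))\bigr) + \sum_{D_j \not\subset \mathrm{Inv}(E_k)} w(D_j)\bigl(2 - v_{\overline{d}}^{a}(D_j)\bigr).
\]

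Next I would use that the sets $\mathcal{D}_l$, indexed by the separating centers $P_l$, partition the irreducible components of $E_k$ (Definition \ref{def:controllable-components}). Splitting each of the three sums according to the part $\mathcal{D}_l$ that contains the divisor $D_j$ in question --- each $\kappa$-term and each valence term being attached to a single divisor, hence to a single $\mathcal{D}_l$, and the middle sum being already indexed by separating centers --- regroups the right hand side exactly as $\sum_{P_l\ \mathrm{s.c.}}\mathcal{E}_l$, where $\mathcal{E}_l$ is the quantity defined just before Lemma \ref{lem:epsilon-l}. No term is counted twice or omitted, so $\nu_0(\mathcal{F}) = \sum_{P_l\ \mathrm{s.c.}}\mathcal{E}_l$.

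Then I would invoke Lemma \ref{lem:epsilon-l}, which gives $\mathcal{E}_l \geq \frac{1}{2}\sum_{j=1}^{n_l}\nu_0(\gamma^{l,j}) + \frac{1}{2}\sum_{j=1}^{m_l}\nu_0(\overline{\gamma}^{l,j})$ for every separating center $P_l$, so that
\[
  \nu_0(\mathcal{F}) \geq \frac{1}{2}\sum_{P_l\ \mathrm{s.c.}}\Bigl(\sum_{j=1}^{n_l}\nu_0(\gamma^{l,j}) + \sum_{j=1}^{m_l}\nu_0(\overline{\gamma}^{l,j})\Bigr).
\]
The final step is a counting observation: the strict transform of any irreducible component of $\hat{\Gamma}$ meets $E_k$ transversely at a single point, which lies on a single irreducible component of $E_k$, hence in a single $\mathcal{D}_l$; therefore every component of $\Gamma$ occurs exactly once among the $\gamma^{l,j}$ and every component of $\overline{\Gamma}$ exactly once among the $\overline{\gamma}^{l,j}$. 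Since $\hat{\Gamma}=\Gamma\cup\overline{\Gamma}$ with $\Gamma\cap\overline{\Gamma}\subset\{0\}$ and $\nu_0$ is additive over branches, the double sum equals $\nu_0(\Gamma)+\nu_0(\overline{\Gamma})=\nu_0(\hat{\Gamma})$, and we conclude $\nu_0(\mathcal{F}) \geq \nu_0(\hat{\Gamma})/2$.

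I do not expect any genuine obstacle at this level: the real work --- controlling each controllable part $\mathcal{E}_l$, where one must treat separately the root $P_0$, the smooth components of $\overline{\Gamma}$ (forcing $v_{\overline{d}}^{a}\leq 1$ and $w(D(P_l))=1$), and the case in which $P_l$ has a descendant separating center (where one extracts the ``hidden'' contributions of Remark \ref{rem:hidden} through \eqref{equ:contribh}) --- is already done in Lemma \ref{lem:epsilon-l}. The only point requiring care in the present argument is the bookkeeping of the regrouping \eqref{eq:dicritical-part-divided} and the verification that splitting Hertling's formula along the partition $\{\mathcal{D}_l\}$ faithfully recovers $\sum_l\mathcal{E}_l$.
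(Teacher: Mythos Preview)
Your proof is correct and follows exactly the paper's approach: substitute the regrouping \eqref{eq:dicritical-part-divided} into Hertling's formula to cancel the $+1$, split along the partition $\{\mathcal{D}_l\}$ to obtain $\nu_0(\mathcal{F})=\sum_{P_l\ \mathrm{s.c.}}\mathcal{E}_l$, apply Lemma \ref{lem:epsilon-l}, and observe that each branch of $\hat{\Gamma}$ is counted in exactly one $\mathcal{D}_l$. Your write-up is in fact slightly more explicit than the paper's on the bookkeeping (the cancellation of the spurious $+1$ and the one-branch-one-$\mathcal{D}_l$ counting), but the argument is the same.
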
 
\begin{proof}
We have
 \begin{equation*}
\begin{split}
  \nu_0 ({\mathcal F})  = \sum_{D_j} \sum_{P \in D_j}
  w(D_j) \kappa_P ({\mathcal F}_k, D_j)  
- \sum_{P \mathrm{\ s.c.}} \delta(P_l)w(D(P_l)) + \\ 
\sum_{D_j  \not \subset \mathrm{Inv}(E_k)}
w(D_j)  (2 - v_{\overline{d}}^{a} (D_j))
\end{split}
\end{equation*} 
by Hertling's formula \eqref{equ:Hertling} and Equation
\eqref{eq:dicritical-part-divided}. Since the union of the
${\mathcal D}_l$ is the set   of irreducible components of the exceptional divisor $E_k$ of $\pi$ and
$\mathcal{D}_l \cap \mathcal{D}_j$ is at most a point for $l \neq j$ and hence does not contain an irreducible component of $E_k$, we get 
\[ \nu_0 ({\mathcal F}) = \sum_{P_l \ \mathrm{s.c.}}   {\mathcal E}_{l} . \]
 We deduce
\[ \nu_0 ({\mathcal F}) = \sum_{P_l \ \mathrm{s.c.}}   {\mathcal E}_{l} \geq 
\frac{1}{2} \sum_{P_l \ \mathrm{s.c.}}  \left(    \sum_{j=1}^{n_l} \nu_0 (\gamma^{l,j}) +
 \sum_{j=1}^{m_l} \nu_0 (\overline{\gamma}^{l,j}) \right) = \frac{\nu_0(\hat{\Gamma})}{2} , \]
 where the inequality is a consequence of Lemma \ref{lem:epsilon-l}.
\end{proof}

We want to stress again that the structure of $\mathcal{F}$ along
$E_k$ is totally irrelevant except for the property that
$\hat{\Gamma}$ is weakly isolated, which only affects the intersection
points of $\hat{\Gamma}_k$ and $E_k$ along the resolution of
singularities of $\hat{\Gamma}$. The argument works whatever the
family of separating centers $P_l$ is and whatever non-invariant
irreducible components of $E_k$ are for $\mathcal{F}_k$, as long as
the weakly isolation holds.

The previous result provides a lower bound for the multiplicity of the foliation in terms 
of the multiplicity of an invariant curve but we do not require that all irreducible 
components are isolated. This will be very useful in desingularization settings in which 
invariant divisors can not be assumed to be isolated 
for the foliations ${\mathcal F}_1, \hdots, {\mathcal F}_k$.
\begin{rem}
Let us remark that 
in \cite{Corral-Fernandez:2006}, they proved that $M \nu_{0} ({\mathcal F}) \geq \nu_{0} (\Gamma)$ for some $M>0$
in the isolated case. We have shown that $M\leq 2$.
\end{rem} 

 \section{Global Poincar\'{e} problem}
 In the previous sections we studied the Poincar\'{e} problem in the local setting.
 We want to apply  Theorem \ref{teo:poincare-reducible}
 to obtain linear lower bounds for the multiplicity
 of a foliation in terms of the multiplicity of an invariant curve. 

We consider an algebraic curve $\Gamma$ in ${\bf C}{\bf P}(2)$ that is invariant 
by a foliation ${\mathcal F}$. 
Carnicer's \cite{Car:1994} solution of the Poincar\'{e} problem for the case where the curve $\Gamma$ 
does not contain dicritical singularities of ${\mathcal F}$ relies on showing the following local 
property: let ${\mathcal F}$ be a germ of non-dicritical foliation that preserves the curve $\Gamma$. 
Consider a reduced equation $f \in {\mathcal O}_2$ of $\Gamma$ and the foliation 
${\mathcal H}$ given by the first integral $f$. Then we always have
\[ Z_{P} ({\mathcal F}, \gamma) \geq Z_{P}({\mathcal H},\gamma) \] for
any $P \in \Gamma$ and any branch $\gamma$ of $\Gamma$ defined in a
neighborhood of $P$. In order to obtain lower bounds for
$\nu_0 ({\mathcal F})$ we need to bound
$Z_{P} ({\mathcal F}, \gamma) / Z_{P}({\mathcal H},\gamma)$ from
below.

Let us see one of the difficulties. Consider an irreducible curve $\Gamma$.
Suppose now that $\nu_0 (\Gamma)/ \mu (\Gamma) \leq M$ for some $M \in {\mathbb N}$.
In such a case the quotient $\nu_0 ({\mathcal F})/ \nu_0 ({\mathcal H})$ is bounded from below 
by a positive constant; indeed we have 
\[ \frac{\nu_0 ({\mathcal F})}{\nu_{0}({\mathcal H})+1} = 
\frac{\nu_0 ({\mathcal F})}{\nu_0 (\Gamma)} = 
 \frac{\nu_0 ({\mathcal F})}{\mu(\Gamma)} \frac{\mu(\Gamma)}{\nu_0 (\Gamma)}  
 \geq  \frac{1}{M} \]
 by Equation (\ref{equ:in_virtual}). 
 Notice that $\nu_{0}({\mathcal H}) = \nu_0 (\Gamma) -1$ since ${\mathcal H}$ is a generalized
 curve  \cite{Camacho-Lins-Sad:1984}.
This   motivates us to study whether in such a case   
$Z_{P} ({\mathcal F}, \Gamma) / Z_{P}({\mathcal H},\Gamma)$ 
could be bounded from below by a positive constant.
Next we show that this is not the case.

\begin{example}
Consider the curve $\Gamma$ given by the equation $y^{2} = x^{p}$ where
$p \geq 3$ is an odd number. 
The map $\theta(t)=(t^2,t^p)$ is a Puiseux parametrization of $\Gamma$.
The multiplicities of $\Gamma$ are $\nu_0 (\Gamma)=2$ and 
$\mu(\Gamma)=1$. Consider the foliation ${\mathcal F}$ that has the first integral 
$y^{2}/x^{p}$. It is the foliation defined by the vector field
$X=2 x \frac{\partial}{\partial x} + p y \frac{\partial}{\partial y}$.  Since
\[ d \theta (t) \left(t \frac{\partial}{\partial t} \right) =  2 t^2  \frac{\partial}{\partial x} + 
p t^p\frac{\partial}{\partial y} = X(\theta(t)),
\]
we obtain $\theta^{*} X= t \partial/\partial t$ and $Z_{0} ({\mathcal F}, \Gamma)=1$.
The vector field $Y:= 2y  \frac{\partial}{\partial x} +  p x^{p-1}  \frac{\partial}{\partial y}$ 
is tangent to foliation as $d (y^{2} - x^{p})=0$. Since 
\[ d \theta (t) \left(t^{p-1} \frac{\partial}{\partial t} \right) =  2 t^p  \frac{\partial}{\partial x} + 
p t^{2p-2} \frac{\partial}{\partial y} = Y(\theta(t)),
\]
we get $Z_{0} ({\mathcal H}, \Gamma)= p-1$.
Hence, even if $\nu_0 (\Gamma)/\mu(\Gamma) = 2$ the quotient 
$Z_{0} ({\mathcal F}, \Gamma) / Z_{0} ({\mathcal H}, \Gamma)$ is
not bounded from below by a positive constant.
\end{example}

As a consequence of Theorem \ref{teo:half} the situation is different
for weakly isolated invariant curves.  Let us assume for now that
Theorem \ref{teo:half} holds and that the simpler Lemma
\ref{lem:lines} (see page \pageref{lem:lines}) does too, to prove Theorem \ref{teo:main}.
\begin{proof}[Proof of Theorem \ref{teo:main} assuming Theorem \ref{teo:half}  and Lemma \ref{lem:lines}]
We can consider that the line $L_\infty$ at $\infty$ is generic. 
In particular $L_\infty$ is not ${\mathcal F}$-invariant, $L_\infty \cap \mathrm{Sing} ({\mathcal F}) = \emptyset$ and 
$\Gamma$ intersects $L_\infty$ transversally.
We denote by $(x,y)$ the coordinates in the affine
chart ${\bf C}{\bf P}(2) \setminus L_\infty$.
Let $F$ be a polynomial vector field, with $\mathrm{cod} ({\mathrm Sing} (F)) \geq 2$, defining the foliation ${\mathcal F}$
in the affine chart ${\bf C}{\bf P}(2) \setminus L_\infty$.
We consider an irreducible equation $f \in {\mathbb C}[x,y]$ of the curve $\Gamma$ in 
${\bf C}{\bf P}(2) \setminus L_\infty$.
Let ${\mathcal H}$ be the foliation given by the hamiltonian vector field 
$H:=\frac{\partial f}{\partial y} \frac{\partial}{\partial x} - 
\frac{\partial f}{\partial x} \frac{\partial}{\partial y}$. 

Consider the normalization $\pi: \hat{\Gamma} \to \Gamma$. 
We lift $F$ and $H$ to the smooth compact Riemann surface  $\hat{\Gamma}$.
We denote by $c$ the number of irreducible components of $\Gamma$; it coincides with
the number of connected components of $\hat{\Gamma}$. We define
\[ Z_{P}({\mathcal F}) = Z_{\pi (P)} ({\mathcal F}, \pi (\hat{\Gamma}, P)) \ \ \mathrm{and} \ \ 
 Z_{P}({\mathcal H}) = Z_{\pi (P)} ({\mathcal H}, \pi (\hat{\Gamma}, P))  \]
 for $P \in \hat{\Gamma}$, where $ \pi (\hat{\Gamma}, P)$ is the germ of $ \pi (\hat{\Gamma})$
 at $P$. We claim that $Z_{Q} ({\mathcal F})\geq  Z_{Q} ( {\mathcal H}) /2$ for any 
$Q \in \hat{\Gamma}$. It is a consequence of $ Z_{Q} ({\mathcal H}) =0$ if 
$\pi (Q)$ is a regular point of $\Gamma$.
In the singular case,  we apply 
Theorem \ref{teo:half}.

We apply Poincar\'{e}-Hopf's theorem to the restrictions of $F$ and $H$ to ${\Gamma}$. 
If we denote by $Z _F$ and $P_F$ the number of zeros and poles (with multiplicity) of 
$F_{|\Gamma}$, we obtain
\[ Z_F - P_F = Z_H - P_H =\chi (\hat{\Gamma}) \] 
where $\chi$ stands for the Euler characteristic. 
We have $Z_{F} \geq  Z_{H}/2$ by the previous discussion. Moreover, 
it is well-known that $P_H= m(m-3)$ and 
$P_F= m (d-1)$ where $m= \deg (\Gamma)$ and $d = \deg ({\mathcal F})$
\cite[Proposition 25.22]{Ilya-Yako:2008}.
 We have
\[ m (d-1)  = P_H + Z_F - Z_H \geq  P_H - \frac{Z_H}{2} = 
 \frac{P_H}{2} - \frac{\chi (\hat{\Gamma})}{2}  
 \geq \frac{m(m-3)}{2} - c \]
and then 
\begin{equation}
\label{equ:aux}
 d \geq  \frac{m-3}{2} - \frac{c}{m} +1 = \frac{m}{2} - \frac{c}{m} - \frac{1}{2}. 
\end{equation} 
Assume $c < m$. We deduce
\[   m \leq  2 d +  \frac{2c}{m} + 1  <  2d+3 . \]
It follows
that $m \leq 2d+2$. 

Consider the remaining case  $c=m$. Thus, all irreducible components of $\Gamma$ have
degree $1$, i.e. they are lines. We have
$Z_{F} \geq Z_H$ by Lemma \ref{lem:lines} and hence 
\[ m (d-1)  = P_H + Z_F - Z_H \geq  P_H   = m(m-3) \implies m \leq d+2. \]
Therefore we get $m \leq d+2 \leq 2d +2$. 

Finally, assume that $\Gamma$ is irreducible. 
Since $d=0$ implies $m=1$, we can assume $d \geq 1$. Therefore $m \leq 2d+1$ holds 
if $m \leq 3$.  So, it suffices to consider $m \geq 4$. Since 
\[   m \leq  2 d +  \frac{2c}{m} + 1   = 2 d +  \frac{2}{m} + 1 <  2d+2, \]
it follows that $m \leq 2d+1$. 
\end{proof}
\begin{rem}
Consider a foliation ${\mathcal F}$ of ${\bf C}{\bf P}(2)$ of degree $0$
and let $P$ be its unique singular point. Notice that the unique invariant curves $\Gamma$ that 
satisfy our hypothesis are either a line through $P$ or two lines through $P$. 
In the former case we have $\deg (\Gamma)=1 = 2 \deg ({\mathcal F}) + 1$ whereas in the 
latter case we obtain $\deg (\Gamma)=2 = 2 \deg ({\mathcal F}) + 2$.
\end{rem}
\subsection{Comparison of vanishing orders}
We show Theorem \ref{teo:half} in the remaining of the paper. Let us 
assume that ${\mathcal F}$ is a germ of foliation defined in a neighborhood of $(0,0)$
in ${\mathbb C}^{2}$ without lack of generality.  
We desingularize  
$\Gamma$ along an irreducible component $\gamma$ of $\Gamma$.
Consider the notations in Definition \ref{def:not}. In this case 
$\pi_1, \pi_2, \hdots, \pi_{k}$ is a sequence of blow-ups of infinitely near points 
of  $\gamma$. 
\begin{defi}
We say that $\pi$ is a  
desingularization of  $\Gamma$ along $\gamma$  if 
$\gamma_k$ intersects the divisor transversally at the non-corner point $P_k$   of $E_k$   and 
the germs of $\gamma_k$ and $\Gamma_k$ at $P_k$ coincide.
We assume that $\pi$ is minimal with such a property.
\end{defi}
\begin{rem}
The previous property is not equivalent to a desingularization of $\gamma$. 
For instance if $\gamma$ is smooth and $\Gamma$ is not, we need to blow-up the origin 
since $\Gamma_0 \neq \gamma_0$.
\end{rem}

By appying iteratively Equations (\ref{equ:znon-dic}) and (\ref{equ:zdic}), we obtain 
\begin{equation}
\label{equ:ziter}
 Z_{0} ({\mathcal F}, \gamma) = \nu^{\gamma}_0 \tau_0 + \hdots + \nu^{\gamma}_{j-1}\tau_{j-1} + 
Z_{P_j} ({\mathcal F}_j,  \gamma_j) 
\end{equation}
for any $0 \leq j \leq k$, 
where $\nu^{\gamma}_j=\nu_{P_j} (\gamma_j)$  and $\tau_j = \nu_{P_j} ({\mathcal F}_j)$ 
if ${\mathcal F}_j$ is $1$-dicritical at $P_j$ and 
$\tau_j = \nu_{P_j} ({\mathcal F}_j)-1$  otherwise.
Since ${\mathcal H}$ is non-dicritical, we get
\[ Z_{0} ({\mathcal H}, \gamma) = \nu^{\gamma}_0(\nu_{P_0} ({\mathcal H}_0)-1) + \hdots + 
\nu^{\gamma}_{j-1}(\nu_{P_{j-1}} ({\mathcal H}_{j-1})-1) +  Z_{P_j} ({\mathcal H}_j, \gamma_j) \] 
for any $0 \leq j \leq k$.
\begin{rem}
\label{rem:mh}
The germ of ${\mathcal H}_j$ at any point is a generalized curve;
therefore we get
\[ \nu_{P_{j}} ({\mathcal H}_{j}) = \nu_{P_{j}} (\tilde{\pi}_j^{-1} (\Gamma)) - 1 = \nu_{P_j} (\Gamma_{j}) + m_j -1 , \]
where $m_j$ is the number of irreducible components of $E_j$ containing $P_j$ \cite{Camacho-Lins-Sad:1984}.  
Notice that $m_j=0$ if $j=0$. We obtain
\begin{equation*}
  Z_{0} ({\mathcal H}, \gamma) =
 \sum_{l=0}^{j-1} 
  \nu^{\gamma}_{l}(\nu_{P_{l}} (\tilde{\pi}_{l}^{-1} (\Gamma) )-2)
  +  Z_{P_j} ({\mathcal H}_j, \gamma_j)
\end{equation*}
for any $0 \leq j \leq k$, where $\tilde{\pi}_{0}^{-1}(\Gamma)=\Gamma$ by convention.  
Notice that $\nu_{P_{l}} (\tilde{\pi}_{l}^{-1} (\Gamma)) - 2 \leq \nu_{P_{l}}(\Gamma_{l})$
for $0 \leq l \leq k$.    
\end{rem} 
\begin{rem}
Given $0 \leq j < k$, we have $ \tau_{j} \geq \nu_{P_j} ({\mathcal F}_{j}) -1$. 
As  we are going to use Equation (\ref{equ:ziter}) to obtain lower bounds of $Z_{0} ({\mathcal F}, \gamma)$, 
  we want to consider points $P_j$ that have a non-negative contribution  $\tau_{j}$ to 
Equation (\ref{equ:ziter}).  Indeed we will consider points $P_j$ with $\nu_{P_j} ({\mathcal F}_{j}) \geq 1$.
This motivates the next definition.  
\end{rem}
\begin{defi}  
\label{def:i}
Let $I= \{  0, 1, \hdots, k-1 \}$ be the set of indices of blow-up centers, and consider those where $\Gamma_l$ has multiplicity $1$:
$$I_{1}= \{ l \in I: \nu_{P_{l}} (\Gamma_l)=1 \}.$$   
Define $\iota$ as the maximum  element of $I$ with $\nu_{\iota}^{\gamma}=\nu_{P_{\iota}}(\gamma_{\iota})>1$ (if $I_1=\emptyset$ then $\iota$ is irrelevant and can be defined as $-1$). We set   $\Omega_1=I_1$ if $\iota\geq 0$ and $D_{\iota +1}$ is non-invariant, and $\Omega_1=\emptyset$ otherwise.
We define   $\rho= \min (I_1 \cup \{k\} ) -1$  (that is, the last index such that  $\nu_{P_{\rho}}(\Gamma_{\rho} )>1$).  
\end{defi}
\begin{rem}
  The set $I_1$ is the ``final stage'' in the resolution of singularities of both $\gamma$ and $\Gamma$: for $j\in I_1$, 
 the germs $(\Gamma_j, P_j)$ and $(\gamma_j, P_j)$   in $\tilde{\pi}_{j}^{-1} ({\mathbb C}^{2})$   coincide, 
  $P_{j}$ is of multiplicity $1$ (for both of them, obviously), and it is also the corner $D_j\cap D_{\iota+1}$. 
  The set $\Omega_1$ is non-empty if and only if $D_{\iota+1}$ is non-invariant. In this case, for $j\in \Omega_1=I_1$, 
  $P_j$ always belongs to at least one non-invariant component of $E_j$.

Also, $I_1\neq \emptyset$ implies that $\gamma$ is a singular curve 
(otherwise, as soon as $\nu_{P_j}(\Gamma_j)=1$, we should have $j=k$, so that $I_1=\emptyset$).
\end{rem}
\begin{rem}
\label{rem:mult_1}
We have $\nu_{P_j} ({\mathcal F}_{j}) \geq 1$ if $j \in I \setminus \Omega_1$. 
This is clear if $j \in I \setminus I_1$ since  $\nu_{P_{j}} (\Gamma_j) \geq 2$.
Moreover, it also holds if $j \in I_1 \setminus \Omega_1$  since  
there are two invariant curves   in $\tilde{\pi}_{j}^{-1} ({\mathbb C}^{2})$,    namely $\Gamma_{j}$ and $D_{\iota +1}$, containing $P_j$.
This is what makes $\Omega_1$ so important: it contains the ``worst'' centers in terms of lower bounds for $\nu_{P_j}(\mathcal{F}_j)$; this will become clear as we proceed.
\end{rem}
Before continuing, notice that the inequality we wish to prove can be written, by  Remark \ref{rem:mh}, as
\begin{equation}\label{eq:big-inequality}
  \frac{\nu^{\gamma}_0\tau_0 + \hdots + 
    \nu^{\gamma}_{\rho}\tau_{\rho}
    +  Z_{P_{\rho+1}} ({\mathcal F}_{\rho+1}, \gamma_{\rho +1})}
  {    \sum_{j=0}^{k-1} 
  \nu^{\gamma}_{j}(\nu_{P_{j}} (\tilde{\pi}_{j}^{-1} (\Gamma) )-2)
  +  Z_{P_{k}} ({\mathcal H}_{k}, \gamma_{k})} \geq \frac{1}{2}.
\end{equation}   
We are going to partition both numerator and denominator of the left
hand side and verify that, for each of the sets of the partition, the
corresponding sums are both positive and satisfy the inequality, and
that will finish the argument.

First of  all, let us consider the last terms   of the case $I_1 = \emptyset$.  
\begin{lem}
\label{lem:fin}
$Z_{P_k} ({\mathcal H}_k, \gamma_k) = 1$, and $Z_{P_k} ({\mathcal F}_k,  \gamma_k)  \geq 1$ if $D_k$ is invariant.
\end{lem}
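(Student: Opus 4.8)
\textbf{Proof plan for Lemma \ref{lem:fin}.}

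The situation at $P_k$ is very rigid: by minimality of the desingularization $\pi$ of $\Gamma$ along $\gamma$, the strict transform $\gamma_k$ meets $E_k$ transversally at the non-corner point $P_k$, and the germs of $\gamma_k$ and $\Gamma_k$ at $P_k$ coincide (Definition of desingularization along $\gamma$); in particular, near $P_k$, the total transform $\tilde\pi_k^{-1}(\Gamma)$ consists just of the smooth curve $\Gamma_k=\gamma_k$ meeting the smooth curve $D_k$ transversally — a normal crossings configuration. The plan is to compute the two $Z$-indices directly in local coordinates $(u,v)$ centred at $P_k$ in which $D_k=\{u=0\}$ and $\gamma_k=\{v=0\}$.

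For $\mathcal H_k$: since $\mathcal H$ has the first integral $f$, its strict transform $\mathcal H_k$ has a first integral of the form (unit)$\cdot u^{a} v$, where $u^a$ is the contribution of $D_k$ to the total transform of $f$ and $v$ is the equation of $\gamma_k$; equivalently, $\mathcal H_k$ is generated near $P_k$ by a vector field of the form $X = u\,\partial/\partial u \cdot(\text{something}) + \ldots$ whose restriction to $\gamma_k=\{v=0\}$ is, up to a unit, $u\,\partial/\partial u$. Concretely, parametrising $\gamma_k$ by $\alpha(t)=(t,0)$ and pulling back the generator of $\mathcal H_k$, the leading term is $t\,\partial/\partial t$, so the vanishing order of $\alpha^\ast X$ at the origin is $1$, i.e. $Z_{P_k}(\mathcal H_k,\gamma_k)=1$. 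This also follows more cleanly from the blow-up formula \eqref{equ:zdic}/\eqref{equ:znon-dic} applied to $\mathcal H$ together with Remark \ref{rem:mh}: in fact the identity displayed in Remark \ref{rem:mh} with $j=k$, using that $\nu_{P_l}(\tilde\pi_l^{-1}(\Gamma))-2 = \nu_{P_l}(\Gamma_l)+m_l-2$ and that at the final stage $\Gamma_k=\gamma_k$ is smooth and meets one component $D_k$, forces $Z_{P_k}(\mathcal H_k,\gamma_k)=1$; alternatively one checks it is just the GSV-index of the normal-crossings pair, which is $1$.

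For $\mathcal F_k$: assume $D_k$ is invariant. Then $\gamma_k$ and $D_k$ are two distinct curves through $P_k$, both invariant by $\mathcal F_k$ (the former because $\gamma$, hence $\gamma_k$, is $\mathcal F_k$-invariant as $\Gamma$ is invariant; the latter by hypothesis). Hence $P_k$ is a singular point of $\mathcal F_k$ — a vector field generating $\mathcal F_k$ with isolated singularities must vanish at a point lying on two invariant curves — and therefore by Remark \ref{rem:zero_index} we get $Z_{P_k}(\mathcal F_k,\gamma_k)\geq 1$. The only point requiring a word of care is that $\gamma_k$ is genuinely $\mathcal F_k$-invariant: this holds because invariance of a curve is preserved under blow-up, and $\gamma\subset\Gamma$ is $\mathcal F$-invariant by assumption.

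\textbf{Main obstacle.} None of the steps is deep; the only thing to be careful about is bookkeeping at $P_k$ — namely that after the minimal desingularization along $\gamma$ the configuration really is smooth $\gamma_k$ transverse to a single smooth component $D_k$, with no extra components of the total transform of $\Gamma$ passing through $P_k$ (this is exactly the content of the definition of "desingularization of $\Gamma$ along $\gamma$" and of $P_k$ being a non-corner point), so that the first-integral of $\mathcal H_k$ at $P_k$ is literally $u^a v$ up to a unit and the vanishing-order computation gives precisely $1$ rather than something larger.
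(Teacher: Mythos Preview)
Your proposal is correct and follows essentially the same route as the paper: the paper observes that near $P_k$ the pair $(\mathcal H_k,\gamma_k)$ is analytically conjugate to $(d(x^p y^q)=0,\,y=0)$ with $p,q\geq 1$ (your normal form $u^a v$ is just the case $q=1$, which is in fact what occurs since $f$ is reduced), computes the vanishing order to be $1$, and for the second claim notes that two invariant curves through $P_k$ force $P_k\in\mathrm{Sing}(\mathcal F_k)$, whence $Z_{P_k}(\mathcal F_k,\gamma_k)\geq 1$. Your bookkeeping about why only $D_k$ and $\gamma_k$ pass through $P_k$ is exactly the point, and is implicit in the paper's one-line argument.
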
 
\begin{proof}
The pair $({\mathcal H}_k, \gamma_k)$ is analytically conjugated in a neighborhood 
of $P_k$ to the pair $(d(x^p y^q)=0,y=0)$ where $p,q \geq 1$. 
We obtain  $Z_{P_k} ({\mathcal H}_k, \gamma_k) = 1$. If $D_{k}$ is invariant, then 
$Z_{P_k} ({\mathcal F}_k,  \gamma_k)  \geq 1$ since $P_k$ is a singular point.
\end{proof}
The following result was used in the proof of Theorem \ref{teo:main} to improve an inequality.  
\begin{lem}
\label{lem:lines}
Let $\Gamma$ be a weakly isolated curve composed of pairwise transverse smooth branches.
Then $Z_{0} ({\mathcal F},\gamma) \geq  Z_{0} ({\mathcal H},\gamma) \geq 1$ for any irreducible 
component $\gamma$ of $\Gamma$.
\end{lem}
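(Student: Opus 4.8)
The plan is to reduce the whole statement to a single blow-up. Because $\gamma$ is smooth and the branches of $\Gamma$ are pairwise transverse, after blowing up the origin once --- call it $\pi_1$, with exceptional divisor $D_1$ --- the strict transform $\gamma_1$ meets $D_1$ transversely at a trace point $P_1$, and near $P_1$ the strict transform of $\Gamma$ is exactly $\gamma_1$. Set $n=\nu_0(\Gamma)$, the number of branches; since $\Gamma$ is singular, $n\geq 2$. First I would compute $Z_0({\mathcal H},\gamma)=n-1$: writing $\gamma=\{y=0\}$ and a reduced equation of the union of the remaining branches as $g=0$, one has $f=yg$ up to a unit, so $f_x(x,0)=0$ while $f_y(x,0)=g(x,0)$ has order $n-1$ (each of the other branches being transverse to $\gamma$), and pulling the hamiltonian field back along $t\mapsto(t,0)$ produces a field of order $n-1$. (Alternatively this follows from Remark \ref{rem:mh} together with Lemma \ref{lem:fin}, since here $k=1$ and $I_1=\emptyset$.) In particular $Z_0({\mathcal H},\gamma)=n-1\geq 1$, which gives the second inequality and reduces the lemma to showing $Z_0({\mathcal F},\gamma)\geq n-1$.

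For this I would apply Equation \eqref{equ:znon-dic} or \eqref{equ:zdic} to $\gamma$ (with $\nu_0(\gamma)=1$), distinguishing whether $D_1$ is invariant for ${\mathcal F}_1$. If $D_1$ is invariant (${\mathcal F}$ non-$1$-dicritical): for \emph{every} branch $\eta$ of $\Gamma$, the point $P_\eta=\eta_1\cap D_1$ lies on two distinct smooth invariant curves, $\eta_1$ and $D_1$, hence is a singular point of ${\mathcal F}_1$, so $\kappa_{P_\eta}(\pi_1^{*}{\mathcal F},D_1)=Z_{P_\eta}({\mathcal F}_1,D_1)\geq 1$; thus no branch is null, and the weak-isolation hypothesis is automatic here. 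Then Equation \eqref{equ:non-dic} gives $\nu_0({\mathcal F})=\sum_{P\in D_1}Z_P({\mathcal F}_1,D_1)-1\geq n-1$, while $\gamma_1$ passing through the singular point $P_1$ gives $Z_{P_1}({\mathcal F}_1,\gamma_1)\geq 1$, so \eqref{equ:znon-dic} yields $Z_0({\mathcal F},\gamma)=(\nu_0({\mathcal F})-1)+Z_{P_1}({\mathcal F}_1,\gamma_1)\geq(n-2)+1=n-1$. If $D_1$ is non-invariant (${\mathcal F}$ $1$-dicritical): now $\kappa_{P_\eta}(\pi_1^{*}{\mathcal F},D_1)=\mathrm{tang}_{P_\eta}({\mathcal F}_1,D_1)$, and by Definition \ref{def:weak_isolation} the null branches are contained in a normal-crossings divisor, hence there are at most two of them, so at least $n-2$ branches $\eta$ have $\mathrm{tang}_{P_\eta}({\mathcal F}_1,D_1)\geq 1$. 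Equation \eqref{equ:dic} then gives $\nu_0({\mathcal F})=\sum_{P\in D_1}\mathrm{tang}_P({\mathcal F}_1,D_1)+1\geq(n-2)+1=n-1$, and \eqref{equ:zdic} yields $Z_0({\mathcal F},\gamma)=\nu_0({\mathcal F})+Z_{P_1}({\mathcal F}_1,\gamma_1)\geq n-1$. In both cases $Z_0({\mathcal F},\gamma)\geq n-1=Z_0({\mathcal H},\gamma)\geq 1$.

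The delicate point is the $1$-dicritical case: Theorem \ref{teo:poincare-reducible} only bounds $\nu_0({\mathcal F})$ from below by $\lceil n/2\rceil$, far weaker than the required $n-1$, so the bound must be extracted entirely from the local data along $D_1$. The two facts that make this work are that an invariant $D_1$ forbids null branches outright (so the non-dicritical case needs no extra hypothesis), and that the dicritical blow-up identity \eqref{equ:dic} carries an extra $+1$ which exactly absorbs the at most two null branches that weak isolation tolerates. I expect the only genuine work to be the careful bookkeeping of the valences and indices in these two cases; everything else is formal.
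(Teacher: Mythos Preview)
Your proof is correct and follows essentially the same route as the paper's: compute $Z_0({\mathcal H},\gamma)=\nu_0(\Gamma)-1$, then split on whether $D_1$ is invariant, using Equation~\eqref{equ:non-dic} with $Z_{P_\eta}({\mathcal F}_1,D_1)\geq 1$ in the invariant case and Equation~\eqref{equ:dic} with the weak-isolation bound $\sum\mathrm{tang}_{P_\eta}\geq \nu_0(\Gamma)-2$ in the dicritical case. Your added remarks---the direct computation of $Z_0({\mathcal H},\gamma)$ via the hamiltonian along $y=0$, and the observation that the invariant case forces every branch to be non-null---are correct elaborations of steps the paper leaves implicit.
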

\begin{proof}
We have
\[  Z_{0} ({\mathcal H},\gamma) = (\nu_{0} ({\mathcal H})-1) + 
Z_{P_1} ({\mathcal H}_1, \gamma_1) =  ( \nu_0 (\Gamma) -2) +1 = \nu_0 (\Gamma) -1.  \]
Assume $D_1$ is invariant. We have
$Z_{P_1} ({\mathcal F}_1, \gamma_1)  \geq 1$ and
\[  Z_{0} ({\mathcal F},\gamma) = \tau_{0} + Z_{P_1} ({\mathcal F}_1, \gamma_1)  \geq 
 \nu_0 ({\mathcal F}) =  \sum_{P \in D_1} Z_{P} ({\mathcal F}, D_1)  -1  \geq \nu_0 (\Gamma)-1,  \]  
since  $Z_{P_{\gamma'}} ({\mathcal F}, D_1) \geq 1$ for any irreducible component $\gamma'$ of $\Gamma$.

Assume $D_1$ is non-invariant. We have
 \[ Z_{0} ({\mathcal F},\gamma) \geq \tau_0 = \nu_0 ({\mathcal F}) = 1 + 
 \sum_{P \in D_1} \mathrm{tang}_{P} ({\mathcal F}, D_1) \geq 1 + (\nu_0 (\Gamma) -2) \]
by   the weak isolation   hypothesis. In any case,
$Z_{0} ({\mathcal F},\gamma)  \geq \nu_0 (\Gamma) - 1 = 
Z_0 ({\mathcal H}, \gamma)$.
\end{proof}
\begin{defi}
Let $0 \leq j \leq k$. From now on, $F_j$ will denote the union of the (at most two, obviously) irreducible components of $E_j$ containing $P_j$,  and $F_{j}'$ its subset of invariant irreducible components. We denote by  $m_j$ be the number of irreducible components of $F_j$.
\end{defi}
\begin{rem} 
\label{rem:blow_isolation}
We are going to apply Theorem \ref{teo:poincare-reducible} at $P_j$ for $j \in I \setminus \Omega_1$ to obtain lower bounds for $\tau_{j}$.
This approach works since weak isolation  is invariant by blow-ups by Proposition \ref{pro:weak_invariance}.
%
%
\end{rem}
The next lemma measures the contribution of each term to the required inequality whenever $j \not \in \Omega_1$.
\begin{lem}
 \label{lem:half}
  Let    $j \in I \setminus \Omega_1$.  
  Let 
  \[ \theta_j = \tau_{j} - \frac{1}{2} (\nu_{P_j} ({\mathcal H}_j)-1) . \]
  Then $\theta_{j} \geq -1$. Moreover, the following non-exclusive statements hold:
  \begin{itemize}
  \item If $F_{j} = F_{j}'$ then $\theta_{j} \geq 0$ (this contains the case $j=0$).
  \item If $\mathcal{F}_j$ is $1$-dicritical at $P_j$ then $\theta_{j} \geq 0$. 
  \item If  $F_{j} = F_{j}'$ and $\mathcal{F}_j$ is $1$-dicritical at $P_j$ then $\theta_{j} \geq 1$. 
  \end{itemize}
  \end{lem}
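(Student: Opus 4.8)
The plan is to bound $\tau_j$ from below by applying the local Poincar\'e inequality of Theorem \ref{teo:poincare-reducible} at the point $P_j$, to the invariant part of the total transform of $\Gamma$ at $P_j$; this is legitimate because weak isolation is stable under blow-up (Proposition \ref{pro:weak_invariance}), iterated along the centers $P_0,\dots,P_{j-1}$, as anticipated in Remark \ref{rem:blow_isolation}. Concretely, for $j\in I\setminus\Omega_1$ I would let $\Gamma^{(j)}$ be the germ at $P_j$ of $\Gamma_j\cup F_j'$, that is, the union of the strict transform of $\Gamma$ at $P_j$ and of the invariant components of $E_j$ through $P_j$. Since every branch of $\Gamma$ is $\mathcal{F}$-invariant, $\Gamma^{(j)}$ is exactly the invariant part of $\tilde{\pi}_j^{-1}(\Gamma)$ at $P_j$, so iterating Proposition \ref{pro:weak_invariance} gives that $\Gamma^{(j)}$ is weakly isolated for $\mathcal{F}_j$, provided it (and each intermediate such curve) is singular, which I will check below. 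First I would record two elementary identities: $\nu_{P_j}(\Gamma^{(j)})=\nu_{P_j}(\Gamma_j)+\#F_j'$ (the components of $F_j'$ are smooth, pairwise distinct, and are not components of $\Gamma_j$), and $\nu_{P_j}(\mathcal{H}_j)=\nu_{P_j}(\Gamma_j)+m_j-1$ by Remark \ref{rem:mh}. Writing $a=\nu_{P_j}(\Gamma_j)$ and $b=\#F_j'$, one has $0\le b\le m_j\le 2$ and $\nu_{P_j}(\mathcal{H}_j)-1=a+m_j-2$.

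Before invoking Theorem \ref{teo:poincare-reducible} I would verify that $\Gamma^{(j)}$ is singular, and that the same holds at every intermediate center so that Proposition \ref{pro:weak_invariance} can indeed be iterated up to $P_j$. If $\Gamma^{(j)}$ were smooth then $a=1$ and $b=0$; but $a=1$ forces $j\in I_1$, and then $P_j$ is the corner $D_j\cap D_{\iota+1}$, so $b=0$ would mean $D_{\iota+1}$ is non-invariant, i.e. $j\in\Omega_1$, contradicting $j\in I\setminus\Omega_1$. Since $\nu_{P_l}(\Gamma_l)$ is non-increasing in $l$, the set $I_1$ is a final segment of $I$, so the same dichotomy applied at each center $P_l$ with $l\le j$ shows the invariant part of $\tilde{\pi}_l^{-1}(\Gamma)$ is singular there too. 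Thus Theorem \ref{teo:poincare-reducible} (valid at any point of a smooth surface by coordinate invariance) yields $\nu_{P_j}(\mathcal{F}_j)\ge \frac{1}{2}\nu_{P_j}(\Gamma^{(j)})=\frac{1}{2}(a+b)$.

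It then remains a direct computation, splitting on whether $\mathcal{F}_j$ is $1$-dicritical at $P_j$. If it is, $\tau_j=\nu_{P_j}(\mathcal{F}_j)\ge\frac{1}{2}(a+b)$, so $\theta_j\ge\frac{1}{2}(a+b)-\frac{1}{2}(a+m_j-2)=\frac{1}{2}(2-(m_j-b))\ge 0$, and this improves to $\theta_j\ge 1$ when $F_j=F_j'$ (then $m_j=b$). If $\mathcal{F}_j$ is not $1$-dicritical at $P_j$, $\tau_j=\nu_{P_j}(\mathcal{F}_j)-1\ge\frac{1}{2}(a+b)-1$, so $\theta_j\ge\frac{1}{2}(a+b)-1-\frac{1}{2}(a+m_j-2)=-\frac{1}{2}(m_j-b)\ge -1$, and this becomes $\theta_j\ge 0$ when $F_j=F_j'$. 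The case $j=0$ is the instance $F_0=F_0'=\emptyset$, $m_0=b=0$, for which $\Gamma^{(0)}=\Gamma$ is singular by hypothesis, and the same two computations apply and give $\theta_0\ge 0$ (and $\theta_0\ge 1$ in the $1$-dicritical case). Collecting these inequalities yields the stated bound $\theta_j\ge -1$ together with the three listed improvements.

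The main obstacle is the combinatorial bookkeeping around $\Omega_1$: one must make sure that, for $j\notin\Omega_1$, the invariant total transform stays singular all the way along the resolution of $\gamma$, so that Proposition \ref{pro:weak_invariance} can be iterated legitimately up to $P_j$; and one must pin down exactly the interplay between $\#F_j'$, $m_j$, and the $1$-dicriticalness of $\mathcal{F}_j$ at $P_j$. Once these geometric points are settled, the four inequalities are the elementary arithmetic above.
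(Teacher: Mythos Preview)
Your proof is correct and follows essentially the same approach as the paper: both apply Theorem \ref{teo:poincare-reducible} at $P_j$ to the invariant part of the total transform (legitimized by the blow-up stability of weak isolation, Proposition \ref{pro:weak_invariance} / Remark \ref{rem:blow_isolation}), use Remark \ref{rem:mh} to rewrite $\nu_{P_j}(\mathcal{H}_j)-1$, and then do the elementary arithmetic. Your packaging via $a=\nu_{P_j}(\Gamma_j)$, $b=\#F_j'$ and the single bound $\nu_{P_j}(\mathcal{F}_j)\ge\tfrac{1}{2}(a+b)$ is slightly more uniform than the paper's case-by-case phrasing, and you make the singularity check at intermediate centers (needed to iterate Proposition \ref{pro:weak_invariance}) more explicit than the paper does, but the substance is the same.
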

\begin{proof}
 We have
$\nu_{j} ({\mathcal H}_j)-1 = \nu_{P_j} (\Gamma_j)+ m_j -2$
by Remark  \ref{rem:mh}. Since 
\[ \tau_j \geq  \nu_{P_j} ({\mathcal F}_j)  -1 \geq \frac{1}{2} \nu_{P_j} (\Gamma_j) -1 \]
by  Theorem \ref{teo:poincare-reducible}, Remarks \ref{rem:mult_1} and \ref{rem:blow_isolation} and $\nu_{P_j} (\Gamma_j)  \geq \nu_{P_j} ({\mathcal H}_j)-1$, we obtain 
$\theta_{j} \geq -1$. Now we prove each of the statements.
  \begin{itemize}
  \item  If  $F_{j} = F_{j}'$,
  then
\[ \tau_j \geq \frac{1}{2} (\nu_{P_j} (\Gamma_j) +m_j) - 1 \geq 
\frac{1}{2} (\nu_{j} ({\mathcal H}_j)-1 )   \]
by Theorem  \ref{teo:poincare-reducible} and Remarks \ref{rem:mult_1} and  \ref{rem:blow_isolation}, from which $\theta_j\geq 0$ follows (the case $j=0$ is covered because $\Gamma$ is singular at $(0,0)$, which implies that $\nu_j(\mathcal{H}_j)\geq 1$).
\item If ${\mathcal F}_j$ is $1$-dicritical at $P_j$, then
$ \tau_j = \nu_{P_j} ({\mathcal F}_j) \geq    \frac{1}{2} \nu_{P_j} (\Gamma_j)$ 
by Theorem \ref{teo:poincare-reducible} and Remark \ref{rem:blow_isolation} and  $\theta_{j} \geq 0$ follows from
$\nu_{P_j} (\Gamma_j)  \geq \nu_{P_j} ({\mathcal H}_j)-1$.
\item Finally, if $F_{j} = F_{j}'$ and $\mathcal{F}_j$ is $1$-dicritical at $P_j$ then we obtain 
\[  \tau_j = \nu_{P_j} ({\mathcal F}_j) \geq    \frac{1}{2} (\nu_{P_j} (\Gamma_j) +m_j)
=   \frac{1}{2} (\nu_{P_j} ({\mathcal H}_j)-1)  + 1  \]
by Theorem \ref{teo:poincare-reducible} and Remark \ref{rem:blow_isolation} and  hence $\theta_{j} \geq 1$.
\end{itemize}
  The proof is complete.
\end{proof}
\begin{defi}  
Given a subset $S$  of $I \setminus \Omega_1$, we define
 \[ \Theta_{S} =  \sum_{m \in S} \nu^{\gamma}_{m}\tau_{m}   - \frac{1}{2}  \sum_{m \in S} \nu^{\gamma}_{m}(\nu_{P_m} ({\mathcal H}_m)-1) .   \]  
\end{defi}
We now divide the sequence $(P_j)_{j \in I \setminus \Omega_1}$ into ``satisfactory'' subsequences. Assume $I \setminus \Omega_1=\bigcup_{i=1}^{\ell}S_i$
with $S_{i}\cap S_j=\emptyset$ if $i\neq j$. We will estimate $ \Theta_{S_i}$
for each $i\in \left\{ 1,\ldots,\ell \right\}$  and these estimates will essentially imply Theorem \ref{teo:half}.   
The following definitions provide the required partition.  
\begin{defi}
\label{def:leader}
Let $0 \leq j \leq k$. We say that $P_j$ is a   {\it precursor point}  
if $j \in I \setminus \Omega_1$ and at least one of the following non-exclusive properties holds:
\begin{itemize}
\item $D_{j+1}$ is non-invariant and $\nu^{\gamma}_{j+1} < \nu^{\gamma}_j$;
\item $D_{j+1}$ is non-invariant and it is the unique non-invariant divisor containing $P_{j+1}$ in $E_{j+1}$;
\item every irreducible component of $E_j$ containing $P_j$ is invariant.
\end{itemize}
We say that $P_j$ is a {\it leader point} if it is a precursor point such that $F_j =F_{j}'$.  
\end{defi}
A   \emph{precursor point}   $P_{j}$ either only belongs to invariant divisors or (non-exclusively) marks the start of a chain $P_{j},\ldots, P_{r}$ of blow-ups such that $D_{j+1}$ is non-invariant, and for $l\in \{j+1, \ldots, r\}$, the curve $\gamma_l$  intersects   $D_{j+1}$ 
  or more precisely, its strict transform by $\pi_{j+2} \circ \hdots \circ \pi_{l}$ if $l > j+1$  
(an important well-known consequence is that the multiplicity of $\gamma_l$ at $P_l$ is constant for $l\in \left\{ j+1,\ldots,r-1 \right\}$). 
\begin{defi}
\label{def:partition}
We define a partition $\mathcal{P}'$ of the set $I \setminus \Omega_1$
as follows: a set $\{j, \hdots , r\}$ belongs to $\mathcal{P}'$ if
$P_{j}$ is a leader, $P_{j+1}, \hdots, P_{r}$ are non-leaders and
either $P_{r+1}$ is a leader or $r+1 \not \in I \setminus
\Omega_1$. Replacing ``leader" with ``precursor", we obtain another
partition ${\mathcal P}$ that is finer than ${\mathcal P}'$.
\end{defi}
Notice that if $\iota>-1$ (cf. Definition \ref{def:i}) 
  and $\Omega_1 \neq \emptyset$    
then $P_{\iota}$ is always a  precursor. Finally, \emph{roughly speaking}, the sets in $\mathcal{P}$ are usually singletons, unless there are specific chains of dicritical divisors.

As pointed out above,  the sets of the partition $\mathcal{P}$ are all well-behaved with respect to \eqref{eq:big-inequality}.
\begin{lem}\label{lem:half_c}
Let $S=\{j , \hdots, r\}$ be a set of the partition ${\mathcal P}$.    Set $\delta = 1$ if $F_{j}=F_{j}'$ and $\delta =0$ otherwise.
Then we have
\begin{itemize}
\item $\Theta_{S} \geq (\delta - 1)  \nu_{j}^{\gamma} $ in any case;
\item $\Theta_{S} \geq  \nu_{r+1}^{\gamma} + (\delta -1) \nu_{j}^{\gamma} $ if $D_{j+1}$ is non-invariant and $F_{r+1} \neq F_{r+1}'$;
\item $\Theta_{S} \geq 0$ if $D_{j+1}$ is invariant.
\end{itemize}
\end{lem}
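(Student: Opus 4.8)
The plan is to reduce the whole statement to term-by-term estimates provided by Lemma \ref{lem:half}. Writing $\theta_m=\tau_m-\frac{1}{2}(\nu_{P_m}(\mathcal{H}_m)-1)$ as in that lemma, one has $\Theta_S=\sum_{m=j}^{r}\nu^{\gamma}_m\theta_m$, with $\theta_m\geq -1$ for every $m\in S$ (recall $S\subset I\setminus\Omega_1$, so Lemma \ref{lem:half} applies), and the multiplicities $\nu^{\gamma}_m$ are non-increasing in $m$.

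First I would dispose of the case $D_{j+1}$ invariant (third bullet). Since $P_j$ is a precursor and the first two clauses of Definition \ref{def:leader} demand $D_{j+1}$ non-invariant, $P_j$ must satisfy the third clause, i.e. $F_j=F_j'$, so $\delta=1$. Then every irreducible component of $E_{j+1}$ through a descendant $P_{j+1}$ of $P_j$ (namely $D_{j+1}$ together with, if $P_{j+1}$ is a corner, an invariant component of $F_j$) is invariant; hence $P_{j+1}$ is again a precursor and the block cannot grow: $S=\{j\}$. Lemma \ref{lem:half} (first item, using $F_j=F_j'$) gives $\theta_j\geq 0$, so $\Theta_S=\nu^{\gamma}_j\theta_j\geq 0$, which is both the third bullet and the first bullet (as $\delta=1$).

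Next, suppose $D_{j+1}$ is non-invariant. Then $\mathcal{F}_j$ is $1$-dicritical at $P_j$, so $\theta_j\geq 0$ by Lemma \ref{lem:half}, and $\theta_j\geq 1$ if in addition $F_j=F_j'$, i.e. when $\delta=1$. The structural heart of the argument is that the block $\{j+1,\dots,r\}$ is a dicritical chain: all these centers lie on the non-invariant component $D_{j+1}$, the branch $\gamma$ is tangent to $D_{j+1}$ along them, and one checks that $\mathcal{F}_m$ is $1$-dicritical at $P_m$ for every $j\leq m\leq r-1$; by Lemma \ref{lem:half} this yields $\theta_m\geq 0$ for all such $m$. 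Consequently the only potentially negative summand is $\nu^{\gamma}_r\theta_r$, where $\theta_r\geq -1$, and since $\nu^{\gamma}_r\leq\nu^{\gamma}_j$,
\[
\Theta_S=\nu^{\gamma}_j\theta_j+\sum_{m=j+1}^{r-1}\nu^{\gamma}_m\theta_m+\nu^{\gamma}_r\theta_r\;\geq\;\nu^{\gamma}_j\theta_j-\nu^{\gamma}_j .
\]
If $\delta=0$ this is $\geq -\nu^{\gamma}_j=(\delta-1)\nu^{\gamma}_j$; if $\delta=1$, using $\theta_j\geq 1$, it is $\geq 0=(\delta-1)\nu^{\gamma}_j$. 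This proves the first bullet. For the second bullet, still with $D_{j+1}$ non-invariant, assume $F_{r+1}\neq F_{r+1}'$. If $D_{r+1}$ is non-invariant, $\mathcal{F}_r$ is $1$-dicritical at $P_r$, so $\theta_r\geq 0$ as well and $\Theta_S\geq\nu^{\gamma}_j\theta_j\geq\delta\,\nu^{\gamma}_j$, which dominates $(\delta-1)\nu^{\gamma}_j+\nu^{\gamma}_{r+1}$ because $\nu^{\gamma}_{r+1}\leq\nu^{\gamma}_j$. If $D_{r+1}$ is invariant, then $P_{r+1}$, lying on the non-invariant $D_{j+1}$ and satisfying $F_{r+1}\neq F_{r+1}'$, must be the corner $D_{j+1}\cap D_{r+1}$; using that the resolution of $\gamma$ is minimal and that $P_{r+1}$ is a precursor one extracts a strict drop of $\nu^{\gamma}$ at $P_r$ large enough to absorb $\theta_r\geq -1$ while leaving a surplus $\nu^{\gamma}_{r+1}$, giving $\nu^{\gamma}_j\theta_j-\nu^{\gamma}_r\geq(\delta-1)\nu^{\gamma}_j+\nu^{\gamma}_{r+1}$, as needed.

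The hard part will be the structural claim used above, namely that the dicriticalness of $D_{j+1}$ propagates through the whole chain ($\mathcal{F}_m$ is $1$-dicritical at $P_m$ for $j\leq m\leq r-1$); this is exactly where removing $\Omega_1$ (the genuinely degenerate final stage of the resolution of $\gamma$, see Definition \ref{def:i}) and classifying which points a non-invariant component of the divisor can meet become essential, together with Proposition \ref{pro:weak_invariance} so that Theorem \ref{teo:poincare-reducible} can be invoked after each blow-up. The fine book-keeping at the endpoint $P_r$ needed for the second bullet is the remaining delicate point.
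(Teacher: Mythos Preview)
Your treatment of the case $D_{j+1}$ invariant (third bullet) is correct and matches the paper.

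The gap is in the non-invariant case. Your ``structural claim''---that $\mathcal{F}_m$ is $1$-dicritical at $P_m$ (equivalently, that $D_{m+1}$ is non-invariant) for every $j\le m\le r-1$---is false, not merely hard. Nothing in Definition~\ref{def:leader} forces $D_{m+1}$ to be non-invariant for $j<m\le r$: once $P_m\in D_{j+1}$ (so $F_m\neq F_m'$ automatically), if $D_{m+1}$ happens to be invariant then all three precursor clauses fail and the block simply continues past $P_m$. Concretely, take $j=0$, $D_1$ non-invariant, $P_1$ a trace point of $D_1$ with $\gamma_1$ tangent to $D_1$, and $D_2,D_3$ invariant; then $P_1$ and $P_2=D_1\cap D_2$ are non-precursors, so $r\ge 2$, yet $\mathcal{F}_1$ is not $1$-dicritical and $F_1=\{D_1\}\neq F_1'$, so Lemma~\ref{lem:half} gives only $\theta_1\ge -1$. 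Your inequality $\sum_{m=j+1}^{r-1}\nu^{\gamma}_m\theta_m\ge 0$ is therefore unjustified, and with it your derivations of both the first and second bullets collapse.

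The paper's argument bypasses this by using a multiplicity identity rather than any dicriticality propagation. One shows $r\le s$, where $P_{j+1},\dots,P_s$ is the maximal chain of infinitely near points of $\gamma$ lying on $D_{j+1}$ (this is your claim~(a), and it is true). The standard identity $\nu^{\gamma}_j=\sum_{l=j+1}^{s}\nu^{\gamma}_l$ then gives $\sum_{l=j+1}^{r}\nu^{\gamma}_l\le\nu^{\gamma}_j$, and combining this with only the crude bounds $\theta_j\ge\delta$ and $\theta_l\ge -1$ for $l>j$ yields
\[
\Theta_S\ \ge\ \delta\,\nu^{\gamma}_j-\sum_{l=j+1}^{r}\nu^{\gamma}_l\ \ge\ (\delta-1)\,\nu^{\gamma}_j .
\]
For the second bullet one refines: either $r<s$ (so the partial sum is at most $\nu^{\gamma}_j-\nu^{\gamma}_{r+1}$), or some $D_{l+1}$ with $j+1\le l\le r$ is non-invariant (so a single $\theta_l\ge 0$ saves $\nu^{\gamma}_l\ge\nu^{\gamma}_{r+1}$); in the only remaining case, $r=s$ with all such $D_{l+1}$ invariant, one checks $F_{r+1}=F_{r+1}'$ and the hypothesis of the second bullet is vacuous. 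The missing ingredient in your outline is this multiplicity-sum identity, not a propagation statement about dicriticality.
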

\begin{proof}
We consider the two alternatives: $D_{j+1}$ non-invariant or invariant.   
As usual we identify $D_{j+1}$ with its strict transforms for the sake of simplicity. 
So for instance, when we say $P_{s} \not \in D_{j+1}$ for some $s > j+1$, we mean that $P_s$ is 
not in the strict transform of $D_{j+1}$ by $\pi_{j+2} \circ \hdots \circ \pi_{s}$.

\emph{Case $D_{j+1}$ non-invariant.}
Consider the sequence $P_{j+1}, \hdots, P_{s}$ of infinitely near points of $\gamma$ that belong to 
$D_{j+1}$. 
We claim that $r \leq s$. Assume $r \geq s+1$, aiming at contradiction. 
This implies that $P_s$  and $P_{s+1}$ are not precursors, by definition   of the partition ${\mathcal P}$.    We distinguish two cases:
\begin{itemize}
\item If $D_{s}$ is invariant or $s=j+1$,
then  $D_{j+1}$ is the unique non-invariant divisor containing $P_{s}$ (in the latter case it is a consequence of
$\nu_{j}^{\gamma} =  \nu_{j+1}^{\gamma}$  and the definition of precursor point).
Now, if  $D_{s+1}$ is non-invariant then 
$P_s$ is a precursor  since $P_{s+1} \not \in D_{j+1}$ (contradiction). Otherwise, if $D_{s+1}$ is invariant then $P_{s+1}$
is a precursor, 
providing also a contradiction.

\item If, on the contrary, $D_{s}$ is non-invariant and $s > j+1$,
then, since $P_{s-1}$ is not a  precursor point, we obtain $\nu_{s-1}^{\gamma} = \nu_{s}^{\gamma}$ and as a consequence 
$P_{s+1}$ does not belong neither to $D_{j+1}$ nor to $D_{s}$. We obtain a contradiction since  $P_{s}$ is a precursor if 
$D_{s+1}$ is non-invariant and $P_{s+1}$ is a precursor otherwise.
\end{itemize}
 
The equality $ \nu_{j}^{ \gamma} = \sum_{l=j+1}^{s} \nu_{l}^{\gamma}$ is a direct consequence of the fact that $P_{j+1},\ldots, P_s$ belong to $D_{j+1}$ and $P_{s+1}$ does not.
Now,  we obtain $\Theta_{S} \geq \delta  \nu_{j}^{\gamma} - \sum_{l=j+1}^{r} \nu_{l}^{\gamma}$ by Lemma \ref{lem:half} and hence
\[ \Theta_{S} \geq \delta  \nu_{j}^{\gamma}  - \sum_{l=j+1}^{r} \nu_{l}^{\gamma} \geq  \delta  \nu_{j}^{\gamma}  - \sum_{l=j+1}^{s} \nu_{l}^{\gamma}  = 
(\delta - 1) \nu_{j}^{\gamma} , \]
using that $r\leq s$ and Lemma \ref{lem:half}. There are two subcases to consider.

If $r < s$
or $D_{l+1}$ is non-invariant for some $j +1 \leq l \leq r$, then the inequality $\Theta_{S} \geq  \nu_{r+1}^{\gamma} + (\delta - 1) \nu_{j}^{\gamma}$   follows by  Lemma \ref{lem:half}.

On the other hand, if $r=s$ and $D_{l+1}$ is invariant for all $j +1 \leq l \leq r$, we obtain $F_{r+1} = F_{r+1}'$, which concludes this case.

\emph{Case $D_{j+1}$ invariant}. This implies $F_{j} = F_{j}'$ by Definition \ref{def:leader}
and either    $j+1 \not \in I \setminus \Omega_1$   or $P_{j+1}$ is a precursor and hence $S=\{j\}$   in both cases.  
The result is a consequence of Lemma \ref{lem:half}.
\end{proof}
\begin{lem} \label{lem:theta}
Let $S=\{j , \hdots, r\}$ be a set of the partition ${\mathcal P}'$ which is the union of consecutive sets $S_1, \hdots, S_m$ of ${\mathcal P}$.
Then the following inequalities hold:
\begin{equation}\label{eq:sums-between-initials}
 \Theta_{S}=  \sum_{l=1}^{m} \Theta_{S_l} \geq 0,\;\mathrm{and}\; \sum_{l=1}^{p} \Theta_{S_l} \geq {}  \nu_{1 + \max S_{p}}^{\gamma} {}  
\end{equation}
if $1\leq p < m$ or if $p=m$ and
$F_{1 + r}  \neq F_{1 + r}'$ (the latter condition can only happen if $r = \max (I \setminus \Omega_1)$).
In particular, we obtain $\Theta_{S} \geq  \nu_{1 + r}^{\gamma}$ if $F_{1 + r}  \neq F_{1 + r}'$.
\end{lem}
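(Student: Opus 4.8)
The plan is to decompose $\Theta_S=\sum_{l=1}^m\Theta_{S_l}$, where $S=S_1\cup\cdots\cup S_m$ with $S_l=\{j_l,\ldots,r_l\}$ consecutive $\mathcal{P}$-blocks, $j_1=j$, $r_m=r$, and $j_{l+1}=r_l+1$ (so $\nu^{\gamma}_{r_l+1}=\nu^{\gamma}_{j_{l+1}}$ and $\max S_p=r_p$). The bound will come from a telescoping argument feeding in the three estimates of Lemma \ref{lem:half_c} applied to each block, after first recording which block is which.

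The first step is to classify the blocks. Since $S$ is a single $\mathcal{P}'$-set, only $P_{j_1}=P_j$ is a leader, so $\delta_1:=1$ (i.e. $F_{j_1}=F_{j_1}'$) while for $l\ge 2$ the point $P_{j_l}$ is a precursor that is \emph{not} a leader, hence $F_{j_l}\neq F_{j_l}'$, so $\delta_l=0$ and, because the third clause in the definition of precursor point (Definition \ref{def:leader}) fails, $D_{j_l+1}$ is non-invariant. I then need that $D_{j+1}$ is non-invariant whenever $m\ge 2$, or $m=1$ and $F_{r+1}\neq F_{r+1}'$: if $D_{j+1}$ were invariant, every irreducible component of $E_{j+1}$ through $P_{j+1}$ would be invariant (the new divisor by assumption, an old one being the strict transform of a divisor through the leader $P_j$), so $F_{j+1}=F_{j+1}'$; then $P_{j+1}$ is either outside $I\setminus\Omega_1$ or a precursor, and in the latter case a leader, which forces the $\mathcal{P}'$-block to be $S=\{j\}$, i.e. $m=1$, $r=j$, and $F_{r+1}=F_{j+1}=F_{j+1}'=F_{r+1}'$ — the excluded case.

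With these facts in hand, Lemma \ref{lem:half_c} gives: for $l\ge 2$, $\Theta_{S_l}\ge-\nu^{\gamma}_{j_l}$ always, and $\Theta_{S_l}\ge\nu^{\gamma}_{r_l+1}-\nu^{\gamma}_{j_l}$ whenever $F_{r_l+1}\neq F_{r_l+1}'$ — which holds for $l<m$ since $r_l+1=j_{l+1}$ is a non-leader precursor; and for $l=1$, $\Theta_{S_1}\ge 0$ always and $\Theta_{S_1}\ge\nu^{\gamma}_{r_1+1}$ when $D_{j+1}$ is non-invariant and $F_{r_1+1}\neq F_{r_1+1}'$. I then prove by induction on $p$ the inequality $\sum_{l=1}^p\Theta_{S_l}\ge\nu^{\gamma}_{1+r_p}$ for $1\le p<m$, and also for $p=m$ when $F_{r+1}\neq F_{r+1}'$: the base case $p=1$ is precisely the bound on $\Theta_{S_1}$ above (valid because $D_{j+1}$ is non-invariant in exactly the required cases), and the inductive step is the telescoping
$\sum_{l=1}^{p}\Theta_{S_l}=\sum_{l=1}^{p-1}\Theta_{S_l}+\Theta_{S_p}\ge\nu^{\gamma}_{j_p}+\bigl(\nu^{\gamma}_{j_{p+1}}-\nu^{\gamma}_{j_p}\bigr)=\nu^{\gamma}_{j_{p+1}}=\nu^{\gamma}_{1+r_p}$,
where the inductive hypothesis for $p-1$ is available unconditionally since $p-1<m$, and the estimate for $\Theta_{S_p}$ is the one just listed (it uses $\delta_p=0$, $D_{j_p+1}$ non-invariant, and $F_{r_p+1}\neq F_{r_p+1}'$, all established above). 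This is the second displayed inequality of the lemma, and with $p=m$ it is the ``in particular'' clause.

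It remains to get $\Theta_S\ge 0$. If $F_{r+1}\neq F_{r+1}'$ this is the case $p=m$ just proved, giving $\Theta_S\ge\nu^{\gamma}_{r+1}\ge 0$. If $F_{r+1}=F_{r+1}'$ and $m\ge 2$, combine $\sum_{l=1}^{m-1}\Theta_{S_l}\ge\nu^{\gamma}_{j_m}$ (case $p=m-1<m$) with $\Theta_{S_m}\ge-\nu^{\gamma}_{j_m}$. If $m=1$, then $\Theta_S=\Theta_{S_1}\ge 0$ directly. The part I expect to require the most care is the bookkeeping around the leader/precursor dichotomy: verifying $\delta_l=0$ and the non-invariance of $D_{j_l+1}$ for $l\ge 2$, pinning down that the only way to lose ``$D_{j+1}$ non-invariant'' is the excluded configuration $m=1$, $F_{r+1}=F_{r+1}'$, and correctly handling the boundary situations $j=0$, $m=1$, and $r_p+1\notin I\setminus\Omega_1$.
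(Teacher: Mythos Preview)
Your proposal is correct and follows essentially the same route as the paper: classify the $\mathcal{P}$-blocks using the leader/precursor dichotomy, feed in the three estimates of Lemma~\ref{lem:half_c}, and telescope. Your argument is organized around the case split ``$m\ge 2$ or $F_{r+1}\neq F_{r+1}'$'' versus its complement, while the paper splits on whether $D_{j+1}$ is invariant, but these are equivalent formulations of the same dichotomy and the core telescoping computation is identical.
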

\begin{proof}
  There are two cases, depending on wether $D_{j+1}$ is invariant or not.

If $D_{j+1}$ is invariant, then, by definition of leader point $m=1$, $S_1 = \{ j \}$ and, by Lemma \ref{lem:half_c}, $\Theta_{S_1} \geq 0$. The second part holds because
  the condition    is empty (there is no $1\leq p< m$ and  $F_{1 + r}  = F_{1 + r}'$).

Assume that  $D_{j+1}$ is non-invariant. Denote $\delta = 1$ if 
$F_{1 + r}  \neq F_{1 + r}'$ and $\delta =0$ otherwise.
If $m=1$, then $\Theta_{S_1} \geq  \delta \nu_{1 + r}^{\gamma} $ straightforwardly by Lemma \ref{lem:half_c}. 
Suppose, then, that $m >1$. Then Lemma \ref{lem:half_c} implies the inequalities: $\Theta_{S_1} \geq \nu_{1 + \max S_1}^{\gamma}$, 
\[ \Theta_{S_l} \geq - \nu_{\min S_l}^{\gamma} + \nu_{1+ \max S_{l} }^{\gamma} \] 
for any $1 < l < m$ and  $\Theta_{S_m} \geq -  \nu_{\min S_m}^{\gamma} + \delta \nu_{1 + \max S_m}^{\gamma} $.   
A telescopic argument concludes the proof of the claim.
\end{proof}
At this point, we have all the machinery required to prove Theorem \ref{teo:half}.
\subsection{Proof of Theorem \ref{teo:half}}
 By definition,   we have   $\rho \geq \iota$,  as $\gamma$ is a branch of $\Gamma$. 
 We are going to compare 
 \[ Z_{0} ({\mathcal F}, \gamma)= \sum_{l=0}^{\rho} \nu^{\gamma}_l \tau_l   +
Z_{P_{\rho+1}}({\mathcal F}_{\rho+1}, \gamma_{\rho+1}) \]
with 
\[ Z_{0} ({\mathcal H}, \gamma) = \sum_{j=0}^{k-1} \nu^{\gamma}_j(\nu_{P_j} ({\mathcal H}_j)-1)  + 
Z_{P_k} ({\mathcal H}_k, \gamma_k) \]
  to get the inequality (see Equations (\ref{equ:znon-dic}) and (\ref{equ:zdic})).  
  Recall the partition ${\mathcal P}'$ of Definition \ref{def:partition}.  We will use the estimates in Lemma \ref{lem:theta} 
for the sets of ${\mathcal P}'$. 
  As a consequence, we obtain
\begin{equation}\label{eq:theta-initial-first-part}
   \Theta_{I \setminus \Omega_1} \geq 0, \;\mathrm{and}\;    \Theta_{I \setminus \Omega_1}  \geq 1
   \;\;\mathrm{if}\;\; F_{1 +  \max (I \setminus \Omega_1) } \neq F_{1 +  \max (I \setminus \Omega_1)}'.
\end{equation}   
At this point, there are two cases to consider.
\subsubsection{Case $1$:   $\Omega_1=\emptyset$}   We have $\Theta_{I}=  \Theta_{I \setminus \Omega_1}$.  It suffices to show that 
\[   \Theta_{I} + Z_{P_k} ({\mathcal F}_k, \gamma_k)  - \frac{1}{2} Z_{P_k} ({\mathcal H}_k, \gamma_k)  \geq 0. \]  
Suppose $F_{k} \neq F_{k}'$, i.e. $D_k$ is non-invariant.  Since
$\Theta_{I} \geq 1$, by Equation  \eqref{eq:theta-initial-first-part}, and $ Z_{P_k} ({\mathcal H}_k,  \gamma_k) = 1$ and 
$Z_{P_k} ({\mathcal F}_k, \gamma_k) \geq 0$ hold, 
the inequality follows. Assume now that  $F_{k} = F_{k}'$.  Then, the inequality is a consequence of 
$ Z_{P_k} ({\mathcal F}_k , \gamma_k)  \geq Z_{P_k} ({\mathcal H}_k,  \gamma_k) = 1$   (Lemma \ref{lem:fin}) and  $\Theta_{I} \geq 0$.

\subsubsection{Case 2: $\Omega_1 \neq \emptyset$}\label{case:omega-1}
We have $\Omega_1 = I_1$ and  $D_{\iota+1}$ is non-invariant (Definition \ref{def:i}).  
The point $P_{\iota}$ is a precursor by definition.
Since the strict transform of $D_{\iota +1}$ contains $P_j$ for any $\iota < j <k$, it follows that no point
$P_{\ell}$ with $\ell > \iota$ and   $\ell \in I \setminus \Omega_1$   is a precursor. 
In particular $S:=\{ \iota, \hdots, \rho \}$ belongs to ${\mathcal P}$.

Consider the  set $S'$ of ${\mathcal P}'$ containing $S$. 
By Lemma \ref{lem:theta}, we know that $\Theta_{S' \setminus S} \geq  \nu^{\gamma}_{\iota}$
and hence $\Theta_{I \setminus (\Omega_1  \cup S)} \geq  \nu^{\gamma}_{\iota}$  if $S \neq S'$.
Moreover, we obtain $\Theta_{I \setminus (\Omega_1  \cup S)} \geq 0$ if $S=S'$ again by Lemma \ref{lem:theta}. 
Notice that 
$\tau_{\iota} =  \nu_{P_{\iota}} ({\mathcal F}_{\iota})$ since $D_{\iota+1}$ is non-invariant. 
The previous discussion implies
\[ \Theta_{I \setminus (\Omega_1  \cup S)} + \nu^{\gamma}_{\iota} \theta_{\iota}  \geq  \nu^{\gamma}_{\iota} \]
by Lemma \ref{lem:half}.   Therefore, we only need to show 
\[
\frac{   \nu^{\gamma}_{\iota} + \sum_{l=\iota +1}^{\rho} \tau_{l} +
Z_{P_{\rho+1}}({\mathcal F}_{\rho+1}, \gamma_{\rho+1})}
{    \sum_{l=\iota + 1}^{k-1}
 (\nu_{P_l} ({\mathcal H}_l)-1) + Z_{P_k} ({\mathcal H}_k,  \gamma_k) }   \geq \frac{1}{2} .
\]  
By Lemma \ref{lem:half}, the following inequality
\[  \nu^{\gamma}_{\iota}  + \sum_{l=\iota +1}^{\rho} \tau_{l} \geq
\frac{1}{2} \left(    \sum_{l=\iota +1}^{\rho} 
(\nu_{P_l} ({\mathcal H}_l)-1) \right) + \nu_{\iota}^{\gamma} - (\rho - \iota)  \] 
holds, so that  as  $Z_{P_{\rho +1}} ({\mathcal F}_{\rho +1},  \gamma_{\rho +1})  \geq 0$, it suffices to show 
\begin{equation}
\label{equ:aux_mix}
\frac{   \nu_{\iota}^{\gamma} - (\rho - \iota) }
{  \sum_{l \in I_1}  
(\nu_{P_l} ({\mathcal H}_l)-1) + Z_{P_k} ({\mathcal H}_k,  \gamma_k) } \geq \frac{1}{2}. 
\end{equation}
 The denominator is at most
$1 + \sharp I_1$ by Lemma \ref{lem:fin} and Remark \ref{rem:mh}.
Since $ \nu_{\iota}^{\gamma}  \geq (\rho - \iota) + \sharp I_1$ and $\sharp I_1 \geq 1$, 
Equation (\ref{equ:aux_mix}) is a consequence of $2 \sharp I_1 \geq  \sharp I_1 + 1$.
This completes the proof of Theorem  \ref{teo:half}.

\end{document}